\def\beq{\begin{eqnarray}}
\def\eeq{\end{eqnarray}}
\def\be*{\begin{eqnarray*}}
\def\ee*{\end{eqnarray*}}
\newtheorem{Theorem}{Theorem}[section]
\newtheorem{Lemma}[Theorem]{Lemma}
\newtheorem{Proposition}[Theorem]{Proposition}
\newtheorem{Definition}[Theorem]{Definition}
\newtheorem{Remark}[Theorem]{Remark}
\newtheorem{Assumption}[Theorem]{Assumption}
\newcommand{\Pp}{\mathcal P}
\newcommand{\rmi}{{\rm (i)$\>\>$}}
\newcommand{\rmii}{{\rm (ii)$\>\>$}}
\newcommand{\rmiii}{{\rm (iii)$\>\>$}}
\newcommand{\rmiv}{{\rm (iv)$\>\>$}}
\def \E{\mathbb{E}}
\def \F{\mathbb{F}}
\def \G{\mathbb{G}}
\def \N{\mathbb{N}}
\def \P{\mathbb{P}}
\def \Q{\mathbb{Q}}
\def \R{\mathbb{R}}
\def \W{\mathbb{W}}
\def\Ec{{\cal E}}
\def\Gc{{\cal G}}
\def\Lc{{\cal L}}
\def\Pc{{\cal P}}
\def\Wc{{\cal W}}
\def \d {\delta}
\def \eps {\varepsilon}
\def \no {\noindent}
\def \d{{\rm d}}
\def \1{\mathds{1}}
\def \tp {\tilde{p}}
\def \tq {\tilde{q}}
\numberwithin{equation}{section}
\title{Mean-field limit of  particle systems with absorption}      
\author{Gaoyue Guo\footnote{Université Paris-Saclay CentraleSupélec, Laboratoire MICS and CNRS FR-3487. gaoyue.guo@centralesupelec.fr}, \ Maxime Latypov\footnote{Université Paris-Saclay CentraleSupélec, Laboratoire MICS and CNRS FR-3487.  maxime.latypov@centralesupelec.fr} \   and \  
Milica Toma\v sevi\'c\footnote{CNRS, CMAP  Ecole polytechnique, Institut Polytechnique de Paris, 91120 Palaiseau, France. milica.tomasevic@polytechnique.edu, corresponding author}
}
\date{}                      
\begin{document}

\maketitle

\begin{abstract}
Singular particle systems whose singularity may originate from singular coefficients (intrinsic nature of particles) or hitting times (cross-interactions between particles), have  emerged as a highlighted research topic and attracted abundant attention. Following the work of \cite{ORT2020} in which singularity is due to forces of strong attraction or repulsion between particles, this paper puts focus on the other type of singularity, where one-dimensional particles interact in mean-field way through a bounded kernel and are removed from the system once hitting  some barrier. Such singularity due to hitting times of the given barrier has been well studied in a steam of papers under different settings, e.g. \cite{DIRT1, DIRT2, HL2017, CF2018,HLS2019, NS2019, CRS2023, HPRRS2023}, etc, and our novelty is two fold: (1) The mean-field limit has a strong solution and the corresponding nonlinear Fokker-Planck equation admits a classical solution which in turns yields the regularity of the survival probability with respect to time; (2) We adopt a fixed-point argument that is based on PDE analysis and applied to sub-probability distributions to prove uniqueness of the solution to the mean-field limit. In addition, we show that the particle system admits a weak solution and establish the propagation of chaos by proving the tightness through \textit{Partial Girsanov transforms}.

\vspace{3mm}

\no {\bf Keywords: }~~~ interacting particle system, mean-field limit, Fokker-Planck equation, propagation of chaos
\end{abstract}

\section{Introduction}\label{sec:intro}

Singular behaviour in large interacting particle systems arises when the collective dynamics exhibit breakdowns such as blow-up, loss of regularity or macroscopic mass loss, and is a central topic in mathematical physics and probability. One fruitful way to analyse such systems is through mean-field limits: the high-dimensional random dynamics of $N$ particles are approximated, as $N\to\infty$, by a lower-dimensional nonlinear (McKean–Vlasov) model in which a typical particle interacts only with the evolving law of the system. 

In many applications, and in particular in models of \emph{systemic risk}, the relevant singularities are driven by hitting times of a boundary. In this setting, an individual particle represents, for instance, the capital of a financial institution, which is removed from the system when it hits a default barrier, and the cumulative loss of mass feeds back into the dynamics of the survivors. Such models have been studied in various forms in \cite{DIRT1,DIRT2,NS2019,HLS2019,CRS2023,BGTZ2020,BGTZ2022,DGT2023}, where a major focus is on how contagion through boundary losses can lead to cascades and phase transitions in the large population limit. Hambly and Ledger~\cite{HL2017} analyse a system of absorbing diffusions on the half-line whose diffusion coefficient depends on the current loss process and show convergence of the empirical measure to the solution of a nonlinear stochastic heat equation with Dirichlet boundary condition, together with pathwise uniqueness for the associated McKean–Vlasov problem. Campi and Fischer~\cite{CF2018} introduce $N$-player games and mean-field games with absorption on first exit from a bounded domain, where players interact through a renormalised empirical measure of the survivors; they prove existence of mean-field equilibria and show that these generate approximate Nash equilibria for the underlying games. More recently, Hambly, Petronil, Reisinger, Rigger and S{\o}jmark~\cite{HPRRS2023} study contagious McKean–Vlasov equations with common noise, in which feedback from the loss process is smoothed in time through a kernel and a singular regime with jumps is obtained as the kernel collapses to a Dirac mass, together with convergence of the smoothed systems to relaxed (and in some cases strong) solutions of the singular limit.

A different form of singularity comes from strong attractive or repulsive forces between particles, where blow-up is caused by high concentrations rather than by absorption at the boundary. Olivera, Reis and Tugaut~\cite{ORT2020} study such McKean–Vlasov dynamics with singular drift coefficients motivated by biological aggregation, and show that the corresponding particle systems or mean-field limits may cease to exist globally in time when particles become too concentrated. Our work is instead concerned with singularity created by absorbing boundaries: we consider one-dimensional particles which interact through a bounded mean-field drift and are removed from the system as soon as they hit the origin. In this sense, our paper complements \cite{ORT2020} by focusing on the ``hitting-time'' rather than the ``drift'' type of singularity.

More precisely, we study a system of $N$ real-valued processes $X^{N,1},\dots,X^{N,N}$ on $(0,\infty)$ whose dynamics are given by
\begin{equation}\label{eq:p-s}
X^{N,i}_t = Z^{N,i} + \int_0^t \iota(X^{N,i}_s)\,\frac{1}{N}\sum_{j=1}^N b\bigl(s,X^{N,i}_s,X^{N,j}_s\bigr)\,\iota(X^{N,j}_s)\,\mathrm{d}s
+\int_0^t \sqrt{2}\,\iota(X^{N,i}_s)\,\mathrm{d}W^i_s,\qquad t\ge0,\ i=1,\dots,N,
\end{equation}
where $\iota(x):=\mathbf{1}_{\{x>0\}}$, $b:\R_+\times\R^2\to\R$ is a bounded measurable drift, $(Z^{N,1},\dots,Z^{N,N})$ are exchangeable initial positions in $\R_+$ and $W^1,\dots,W^N$ are independent Brownian motions. Once a particle hits the absorbing barrier at $0$ it stays there forever and ceases to interact, so that $X^{N,i}_t=X^{N,i}_{\tau_i^N\wedge t}$ for all $t\ge0$, where $\tau_i^N:=\inf\{t\ge0:\,X^{N,i}_t\le0\}$ is the first hitting time of zero. Formally, as $N\to\infty$ the empirical measure of the particles is expected to converge to the law of the solution of the McKean–Vlasov SDE
\begin{equation}\label{eq:lim}
\mathrm{d}X_t = \iota(X_t)\Bigl(\int_{(0,\infty)} b(t,X_t,y)\,\mu_t(\mathrm{d}y)\Bigr)\mathrm{d}t
+ \iota(X_t)\sqrt{2}\,\mathrm{d}W_t,\qquad \mu_t:=\mathcal{L}(X_t).
\end{equation}

Our first main contribution is to develop a complete well-posedness and propagation of chaos theory for this model under natural conditions on $b$. At the particle level we show that, for bounded measurable $b$, the system \eqref{eq:p-s} admits a unique strong solution and that the sequence of empirical measures is tight. At the mean-field level we prove that any limit point solves the McKean–Vlasov SDE \eqref{eq:lim} and, under additional regularity of $b$, that this equation admits a unique weak solution; this yields propagation of chaos. Under a Lipschitz condition on $b$ we further establish a quantitative \emph{strong} propagation of chaos in $\mathcal{W}_2$ with optimal rate $C/N$ uniformly on compact time intervals; see Theorems~\ref{thm:strong-system}–\ref{thm:wasserstein-chaos}. Compared with \cite{HL2017,HPRRS2023}, which also analyse absorbing systems related to systemic risk, our coefficients depend on the full spatial configuration through a bounded kernel $b(t,x,y)$ rather than only on the loss process, and we obtain a classical (rather than merely distribution-valued) description of the limit via a nonlinear Fokker–Planck equation together with a direct PDE-based uniqueness argument.

The second main contribution is methodological. We show that the law $\mu_t$ of any solution to \eqref{eq:lim} decomposes as
\[
\mu_t(\mathrm{d}x) = \Bigl(1-\int_0^\infty p(t,y)\,\mathrm{d}y\Bigr)\delta_0(\mathrm{d}x) + p(t,x)\,\mathrm{d}x,
\]
where the sub-probability density $p$ solves a nonlinear Fokker–Planck equation with homogeneous Dirichlet boundary condition at $0$. We construct a classical solution $p$ and set up a fixed-point argument on a suitable space of sub-probability densities which yields uniqueness for this PDE and, in turn, for the McKean–Vlasov SDE \eqref{eq:lim}. This PDE viewpoint also provides information on the regularity of the survival probability $t\mapsto\mathbb{P}(X_t>0)$ and suggests natural extensions to higher-dimensional domains and multiple absorbing barriers.

On the probabilistic side, the identification of the mean-field limit and the proof of quantitative propagation of chaos rely on a refined Girsanov strategy. While a full Girsanov transform from the interacting system to independent killed Brownian motions would not be uniform in $N$, we introduce \emph{partial Girsanov transforms} that modify only finitely many coordinates at a time and keep the Radon–Nikodym derivatives under control as $N\to\infty$. This allows us to exploit the fact that individual particles behave like Brownian motions stopped at zero under an appropriate change of measure, and plays a key role both in the tightness analysis and in the martingale characterisation of limit points.

Although our focus is on uncontrolled systems, our results are also relevant for the mean-field games literature with absorption. Campi and Fischer~\cite{CF2018} treat controlled diffusions with absorption through a renormalised empirical measure and conditional laws, under a nondegeneracy assumption on the diffusion coefficient; by contrast, we work with uncontrolled but possibly degenerate dynamics and concentrate on the well-posedness and quantitative propagation of chaos for the underlying McKean–Vlasov model, which can serve as a building block for future controlled versions of systems with hitting-time interactions.

In general, interacting particle systems are studied across diverse applications, including the dynamics of granular media \cite{BCCP1998, BGG2013}, mathematical biology \cite{BCM2007, FJ}, economics and social networks \cite{CDL2013, HMC2006}, and deep neural networks \cite{MMN2018, RVE2018}. A more detailed exposition can be found in \cite{CD11998, CD21998, MFR1, MFR2}. We also note other models of interacting systems with hitting times, such as integrate-and-fire models in neurosciences \cite{DIRT1, DIRT2}, and related financial models \cite{HLS2019}.

\paragraph{Plan of the paper.}
The rest of the paper is structured as follows. In Section~\ref{sec:main} we state our assumptions on the drift $b$, present the main results for the particle system and for the McKean–Vlasov limit, and outline the proof strategy. Section~\ref{sec:thm1} is devoted to the existence and convergence of the particle system \eqref{eq:p-s}, including the construction of partial Girsanov transforms and the proof of tightness. In Section~\ref{sec:thm2} we establish the well-posedness of the limiting McKean–Vlasov SDE \eqref{eq:lim} via the nonlinear Fokker–Planck equation and the parametrix-based fixed-point argument, and we derive the strong propagation of chaos estimate.

%\paragraph{References} In addition to already mentionned \cite{DGT2023} and \cite{NS2019}, this  model is in the framework of diffusions interacting through hitting times, as for example \cite{......}

%By doing so, we will also obtain strong well-posedness for \eqref{eq:lim}.
%where  for all  $t\ge 0$. 

\paragraph{Notation.}
For a generic Polish space $E$, denote by $C(E)$ (resp. $\Pc(E)$) the set of continuous functions (resp. probability measures) on $E$. For every $q\ge 1$, define $\Pc_q(E)\subset \Pc(E)$ to be the subset of elements admitting finite $q^{\rm th}$ moment, and $\Wc_q$ to be the corresponding \emph{Wasserstein distance} of order $q$ on $\Pc_q(E)$. In what follows, we define the  empirical measures 
\beq \label{def:empirical}
\mu^N:=\frac1N\sum_{i=1}^N \delta_{X^{N,i}} &\mbox{and}& \mu^N_t:=\frac1N\sum_{i=1}^N \delta_{X^{N,i}_t},\quad \forall t\ge 0, 
\eeq 
that take values respectively in  $\Pp(\Omega)$ and $\Pp(\R)$, where $\Omega:=C(\mathbb{R}_+)$.

Throughout the article, we denote by $C^2(\mathbb{R};[0,1])$ the space of twice continuously differentiable functions from $\mathbb{R}$ to $[0,1]$. 

\section{Main results}\label{sec:main}

Before presenting the main results, we introduce the necessary assumptions regarding the drift term $b$.

\begin{Assumption} \label{ass:1}
The drift coefficient $b:\R_+\times\R^2\to\R$ is assumed to satisfy the following conditions:
\begin{itemize}
    \item[\rmi] \textbf{Boundedness:} $b$ is a measurable and bounded function.
    \item[\rmii] \textbf{Regularity near zero:} For each fixed $t \ge 0$, the function $(x_1, x_2) \mapsto b(t, x_1, x_2)$ is continuous almost everywhere on $\R^2$. Furthermore, for every fixed $x \ge 0$ and $t \ge 0$, the limits $\lim_{y\to 0^+} b(t, x, y)$ and $\lim_{y\to 0^+} b(t, y, x)$ exist.
    \item[\rmiii] \textbf{Hölder continuity:} Both $b$ and its partial derivative with respect to the first spatial variable, $\partial_{x_1} b$, are bounded and Hölder continuous in their spatial arguments $(x_1, x_2)$, uniformly in the time variable $t$.
    \item[\rmiv] \textbf{Lipschitz continuity:} The function $b$ is Lipschitz continuous in its spatial variables, uniformly in time. That is, there exists a constant $L>0$ such that, for all $t\ge0$ and all $(x_1, x_2), (x'_1, x'_2) \in\R^2$,
    $$
    |b(t,x_1, x_2)-b(t,x'_1, x'_2)|\le L\big(|x_1-x'_1|+|x_2-x'_2|\big).
    $$
\end{itemize}
\end{Assumption}

We now define the notions of strong and weak solutions to the nonlinear stochastic differential equation \eqref{eq:lim}. Let $(\Omega, \mathcal{F}, (\mathcal{F}_t)_{t\ge 0}, \mathbb{P})$ be a filtered probability space supporting a standard Brownian motion $W = (W_t)_{t \ge 0}$, and let $F = (F_t)_{t \ge 0}$ denote the canonical process $F_t(\omega) = \omega(t)$ for $\omega \in \Omega = C([0, \infty), \R)$.

\begin{Definition}[Strong and Weak Solutions]\label{def:MP}
\begin{itemize}
    \item[\rmi] A stochastic process $X = (X_t)_{t \ge 0}$ is called a \textbf{strong solution} to \eqref{eq:lim} if it is adapted to the filtration generated by the Brownian motion $W$, and the equality \eqref{eq:lim} holds almost surely for all $t \ge 0$.
    \item[\rmii] A probability measure $\Q \in \mathcal{P}(\Omega)$ is termed a \textbf{weak solution} to \eqref{eq:lim} with initial law $\rho \in \mathcal{P}(\R_+)$ if the canonical process $F = (F_t)_{t \ge 0}$ under $\Q$ satisfies the following conditions:
    \begin{enumerate}
        \item[(a)] \textbf{Initial condition:} The law of $F_0$ under $\Q$, denoted $\Q_0 := \Q \circ F_0^{-1}$, coincides with the initial measure $\rho$.
        \item[(b)] \textbf{Martingale property:} For every test function $\varphi \in C^2_c(\R)$ (i.e., twice continuously differentiable with compact support), the process $M^\varphi = (M^\varphi_t)_{t \ge 0}$ defined by
        $$
        M^\varphi_t := \varphi(F_t)-\varphi(F_0)-\int_0^t \iota(F_s)\left( \varphi'(F_s) \left( \int_{\R_+} b(s,F_s,y)\Q_s (\d y) \right)+ \varphi''(F_s)\right)\d s,\quad \forall t\ge 0,
        $$
        where $\Q_s := \Q \circ F_s^{-1}$ is the time-$s$ marginal law of $F$, is a martingale under $\Q$ with respect to the filtration $\F = (\mathcal{F}_t)_{t \ge 0}$.
        \item[(c)] \textbf{State space constraint:} For all $t\ge 0$, $\Q(F_t \in \R_+) = 1$.
    \end{enumerate}
\end{itemize}
\end{Definition}

Our first result concerns the well-posedness of the interacting particle system \eqref{eq:p-s}.

\begin{Theorem}[Well-posedness of the Particle System \eqref{eq:p-s}] \label{thm:strong-system}
Under Assumption~\ref{ass:1}-(i), if the initial positions $Z^{N,1},\ldots, Z^{N,N}$ are square-integrable random variables, then the particle system \eqref{eq:p-s} admits a unique strong solution $(X^{N,1},\ldots, X^{N,N})$. In particular, for each particle $i \in \{1, \ldots, N\}$, $X_t^{N,i}=X^{N,i}_{t\wedge\tau_i}$ holds for all $t\ge 0$, where $\tau_i=\inf\{t\ge0:X_t^{N,i}\le0\}$ is the first time the $i$-th particle hits zero.
\end{Theorem}

Theorem \ref{thm:strong-system} guarantees the existence and uniqueness of a strong solution for the $N$-particle system \eqref{eq:p-s}, which also implies the existence of a unique weak solution. We now investigate the behavior of the system's empirical measure, $\mu^N$ defined in \eqref{def:empirical}, as $N \to \infty$. The following theorem establishes the convergence of the empirical measure to the solution of the mean-field equation \eqref{eq:lim}, demonstrating the propagation of chaos phenomenon.

\begin{Theorem}[Convergence and Propagation of Chaos for \eqref{eq:p-s}] \label{thm:p-s}
\begin{enumerate}
    \item[1)] Let Assumption \ref{ass:1}-(i) hold. Then, for each $N\ge 1$, there exists a unique weak solution $(X^{N,1},\ldots, X^{N,N})$ to the particle system \eqref{eq:p-s}.
    \item[2)] Additionally, suppose Assumption~\ref{ass:1}-(ii) holds and the initial empirical measure $\mu^N_0=\frac1N\sum_{i=1}^N \delta_{Z^{N,i}}$ converges in probability as $N \to \infty$ to a deterministic measure $\rho\in \mathcal{P}(\R_+)$. Then, any limit point $\mu$ of the sequence of empirical measures $(\mu^N)_{N\geq 1}$ (in the space of probability measures on $C([0, \infty), \R)$) is almost surely a weak solution to the mean-field equation \eqref{eq:lim} with initial law $\rho$.
    \item[3)] If, in addition, Assumption~\ref{ass:1}-(iii) holds and the mean-field equation \eqref{eq:lim} admits a unique weak solution $X$ (see Theorem \ref{thm:lim}), then propagation of chaos holds: the empirical measure $\mu^N$ converges in probability to the law of $X$.
\end{enumerate}
\end{Theorem}

Next, we address the well-posedness of the limiting mean-field SDE \eqref{eq:lim}, which necessitates stronger conditions on the drift $b$.

\begin{Theorem}[Well-posedness of the Mean-Field Limit \eqref{eq:lim}] \label{thm:lim}
Let Assumption \ref{ass:1}-(iii) hold. Suppose the initial law $\mathcal{L}(Z)\in \mathcal{P}_2(\R_+)$ admits a density $\rho:\R_+\to\R_+$ satisfying the condition
$$ \int_0^1\frac{\rho(x)}{x^2}\d x < \infty. $$
Then the mean-field SDE \eqref{eq:lim} has a unique weak solution $X$.
\end{Theorem}

Finally, we provide a quantitative result on the convergence of the empirical measure towards the solution of the mean-field equation, often referred to as strong propagation of chaos.

\begin{Theorem}[Strong Propagation of Chaos (Quantitative Rate)]\label{thm:wasserstein-chaos}
Assume Assumptions~\ref{ass:1}-(i), (iv) hold. Let $\mu^N_t$ be the empirical measure of the particle system \eqref{eq:p-s} at time $t$, and let $\mu_t$ be the law of the unique solution $X_t$ to the mean-field equation \eqref{eq:lim} at time $t$. Then, for every $T>0$, there exists a constant $C>0$ such that
\begin{equation}\label{eq:uniform-sup-W2}
\mathbb{E}\Big[\sup_{t\in[0,T]}\mathcal{W}_2(\mu_t^N,\mu_t)^2\Big]\le \frac{C}{N},
\end{equation}
where $\mathcal{W}_2$ denotes the 2-Wasserstein distance.
\end{Theorem}

\paragraph{Strategy of the proof.}

For Theorem~\ref{thm:lim}, the proof of the existence for the McKean-Vlasov SDE  relies on Schauder's fixed-point theorem. One delicate point here  is that the measure
that solves it accumulates dynamically mass in zero, hence the functional on the space of flows of probability measures for which we would naturally exhibit a fixed point contains an indicator function. This is not continuous for Wasserstein distance for probability measures, so the space on which we exhibit a fixed point is enlarged to take into account this dynamical accumulation of mass in zero.  It becomes the product space of flows of probability measures and $\Omega_T$.\\

 As for the uniqueness, we study the  marginal distributions of any solution $X$ to \eqref{eq:lim}. For $t>0$, we first show that the law of $X_t$ is of the form $p(t,x)dx + (1-\int p(t,x)dx ) \delta_0(dx)$, where $p(t,\cdot)$ is a sub-probability density satisfying a parametric Fokker-Planck equation whose coefficients depend on $p$ itself. We show the one-to-one relation between $X$ and such parametric equation, and the uniqueness of $X$ is equivalent to that of $p$ satisfying this PDE. Due to the non-linearity, we consider PDEs of parametric coefficients and estimate the explicit dependency of their solutions on the 
parameters. By introducing a suitably defined operator, we prove that the operator is a contraction and thus the uniqueness result follows. \\

 For Theorem \ref{thm:p-s}, the existence follows through an application of a Girsanov-Cameron-Martin transform. The boundedness of the coefficient $b$ easily gives the tightness of the sequence of the empirical measures. The identification of the limit is done through the martingale approach and is more delicate due to the singularity the stopping time introduces in the drift. We will thus regularize  $\iota(\cdot)$ and show that regularized and non-regularized problems are close. In the latter, it will be crucial to use the fact that individual particles behave like stopped Brownian motions modulo a change of measure. To make such a change uniform in $N$, we adopt the strategy of \cite{MT2020} and introduce the \textit{Partial Girsanov transforms} (see the discussion after Lemma~\ref{Tightness:crit}). 
 
We finish this part with a following remark. We believe that the first two parts of  Theorem~\ref{thm:p-s} could be adapted to the so called $L^q-L^p$  assumption on $b$ with more involved computations as in \cite{MT2020}. We leave possible generalizations of assumptions on $b$ for a future work.

\section{Proof of Theorem \ref{thm:strong-system}}\label{sec:proof1}
Section \ref{sec:proof1} is reserved for the proof of Theorem \ref{thm:strong-system}. We first  construct an explicit solution solving \ref{thm:strong-system} for the existence result, and then we show that this solution is indeed unique. 

    \begin{proof}
    By Assumption~\ref{ass:1}-(i), the drift is bounded and measurable, hence the reduced (alive–set) SDEs satisfy Assumptions~\ref{ass:segment-solve-diff} and \ref{ass:segment-uniq-diff} (see remark \ref{rem:segment-uniq}). Existence then follows from Proposition~\ref{prop:gluing}, and pathwise uniqueness from Proposition~\ref{prop:global-uniq}. 
    \qed
    \end{proof}
 
We begin by isolating the only properties we actually need on the drift \(b\) in order to build the solution by survival–interval gluing.

\begin{Assumption}[Segment–wise strong solvability]\label{ass:segment-solve-diff}
For every nonempty $A\subset\{1,\dots,N\}$, initial time $s\in[0,T]$, initial vector $(x^i)_{i\in A}\in\R_+^{A}$, and Brownian motions $(W^i)_{i\in A}$ on a filtered probability space, the reduced SDE
\[
\begin{cases}
\d X_t^i=\dfrac1N\sum_{j\in A} b\bigl(t,X_t^i,X_t^j\bigr)\,\d t+\sqrt{2}\,\d W_t^i,\quad t\in[s,T],\\
X_s^i=x^i,\quad i\in A,
\end{cases}
\]
admits at least one strong solution adapted to the given filtration.
\end{Assumption}

\begin{Assumption}[Segment–wise pathwise uniqueness]\label{ass:segment-uniq-diff}
Under the same set-up as above, if $(X_t^i)_{i\in A}$ and $(\tilde X_t^i)_{i\in A}$ are two strong solutions of
\[
\begin{cases}
\d X_t^i=\dfrac1N\sum_{j\in A} b\bigl(t,X_t^i,X_t^j\bigr)\,\d t+\sqrt{2}\,\d W_t^i,\quad t\in[s,T],\\
X_s^i=x^i,\quad i\in A,
\end{cases}
\]
driven by the same Brownian motions and with the same initial vector, then $X_t^i=\tilde X_t^i$ a.s. for all $t\in[s,T]$, $i\in A$. 
\end{Assumption}

\begin{Remark}[On Assumptions \ref{ass:segment-solve-diff} and \ref{ass:segment-uniq-diff}] 
Assumption \ref{ass:segment-solve-diff} (segment-wise strong solvability) and \ref{ass:segment-uniq-diff} (segment-wise pathwise uniqueness) hold under any of the following standard hypotheses:
\begin{itemize}
    \item If $b$ is bounded and measurable in all its variables $(t,x,y)$, then on each alive set $A$ the reduced drift vector is bounded and measurable, and the diffusion is nondegenerate and constant. By the Krylov--Zvonkin theory, the reduced SDE admits a unique strong solution adapted to the given filtration (see, e.g., \cite{IW1989,Figalli2008,RZ2021}).
    \item More generally, if $b$ satisfies an $L_t^q$–$L_x^p$ integrability condition of Krylov--R\"ockner type, namely $b\in L^q([0,T];L^p(\R))$ with $\frac{d}{p}+\frac{2}{q}<1$ (here $d=1$), then segment-wise strong solvability also holds (see, e.g., \cite{RZ2021,MT2020}). In particular $b\in L^q_tL^\infty_x$ with $q>2$ yields segment-wise strong well-posedness.
\end{itemize}
Assumption \ref{ass:segment-solve-diff} and \ref{ass:segment-uniq-diff} also hold in higher dimension (specifically for $d>1$) when $b$ is globally Lipschitz in its space variables, uniformly in time, which yields classical pathwise uniqueness for the reduced SDE on each alive set (see, e.g., \cite{KSBook}).
\label{rem:segment-uniq}
\end{Remark}

\begin{Proposition}[Global Absorbed Existence]\label{prop:gluing}
Under Assumption \ref{ass:segment-solve-diff}, the full absorbed particle system
\[
\begin{cases}
X_t^{N,i}
=Z^{N,i}
+\displaystyle\int_{0}^t
 \frac{1}{N}\sum_{j=1}^N b\bigl(r,X_r^{N,i},X_r^{N,j}\bigr)\,
 \1_{\{\tau^{N,j}>r\}}
 \,\d r
+\sqrt2\,W_t^i,\\
\tau^{N,i}=\inf\{t\ge0: X_t^{N,i}\le0\},
\end{cases}
\]
admits at least one strong solution \(\{X^{N,i}\}_{i=1}^N\) on \([0,T]\).
\end{Proposition}

    \begin{proof}
    Fix $T>0$. By Assumption~\ref{ass:segment-solve-diff}, the non-absorbed $N$-particle SDE (full index set $A=\{1,\dots,N\}$, initial time $0$) admits a strong solution on $[0,T]$; denote it by $(Y^{1,1},\dots,Y^{1,N})$. For $i=1,\dots,N$, set $\sigma_i:=\inf\{t\ge0:\,Y^{1,i}_t\le 0\}$, $\sigma_{(0)}:=0$, and $\sigma_{(1)}:=\min_{1\le i\le N}\sigma_i$. Almost surely the exit times are pairwise distinct, hence $I_{(1)}:=\{i:\sigma_i=\sigma_{(1)}\}$ is a singleton and $A_1:=\{1,\dots,N\}\setminus I_{(1)}$.

    On $[\sigma_{(0)},\sigma_{(1)})$, all coordinates are strictly positive, so the absorbed dynamics and the non-absorbed one coincide. If $\sigma_{(1)}<\infty$, condition on $\mathcal{F}_{\sigma_{(1)}}$ and consider the reduced system on the alive index set $A_1$ with initial data $Y^{1,i}_{\sigma_{(1)}}$ and shifted Brownian motions. By Assumption~\ref{ass:segment-solve-diff} (now with $A=A_1$ and initial time $s=\sigma_{(1)}$), there exists a strong solution $(Y^{2,i})_{i\in A_1}$ on $[\sigma_{(1)},T]$. Define
    \[
    \sigma_{(2)}:=\inf\{t>\sigma_{(1)}:\ \exists\,i\in A_1\ \text{with }Y^{2,i}_t=0\},\quad
    I_{(2)}:=\{i\in A_1:\ \sigma_i=\sigma_{(2)}\},\quad
    A_2:=A_1\setminus I_{(2)}.
    \]

    Proceeding inductively, suppose $\sigma_{(0)}<\cdots<\sigma_{(k-1)}$ and $A_{k-1}$ are defined. On $[\sigma_{(k-1)} ,T]$, apply Assumption~\ref{ass:segment-solve-diff} to the reduced system with alive set $A_{k-1}$, initial time $s=\sigma_{(k-1)}$, and initial condition given by the left limits at $\sigma_{(k-1)}$, to obtain a strong solution $(Y^{k,i})_{i\in A_{k-1}}$. Set
    \[
    \sigma_{(k)}:=\inf\{t>\sigma_{(k-1)}:\ \exists\,i\in A_{k-1}\ \text{with }Y^{k,i}_t=0\},\quad
    I_{(k)}:=\{i\in A_{k-1}:\ \sigma_i=\sigma_{(k)}\},\quad
    A_k:=A_{k-1}\setminus I_{(k)}.
    \]
    Since at each step at least one index is removed and there are only $N$ indices, the construction stops after at most $N$ steps.

    Define the global trajectories by concatenation:
    \[
    Y^i_t :=
    \begin{cases}
    Y^{1,i}_t, & t\in[0,\sigma_{(1)}),\\
    Y^{2,i}_t, & t\in[\sigma_{(1)},\sigma_{(2)}),\\
    \ \vdots & \\
    0, & t\ge \sigma_i.
    \end{cases}
    \]
    By construction, $Y$ is adapted and continuous on each segment and at the junctions; for $i\in I_{(m)}$, $\lim_{t\uparrow \sigma_{(m)}}Y^i_t=0$, and for $i\in A_m$, $\lim_{t\uparrow \sigma_{(m)}}Y^i_t=Y^{m,i}_{\sigma_{(m)}}>0$. Moreover, on each interval $[\sigma_{(m-1)},\sigma_{(m)})$ the system solves the reduced non-absorbed SDE with $\iota\equiv 1$ on the alive set, hence, globally, the absorbed particle SDE holds in the integral sense on $[0,T]$. Extending by $0$ after the last absorption yields a strong solution on $[0,T]$. Since $T>0$ was arbitrary, we obtain a global strong solution on $[0,\infty)$.
    \qed
    \end{proof}

    \begin{Proposition}[Global pathwise uniqueness]\label{prop:global-uniq}
    Under Assumptions \ref{ass:segment-solve-diff} and \ref{ass:segment-uniq-diff}, the absorbed particle system admits at most one strong solution driven by a given Brownian family and initial condition. 
    \end{Proposition}

    \begin{proof}
    Let $X=(X^{1},\dots,X^{N})$ and $Y=(Y^{1},\dots,Y^{N})$ be two strong solutions with the same initial data and Brownian motions. Set the first common exit time
    \[
    \sigma_{(1)}:=\min_{1\le i\le N}\{\inf\{t\ge0:\,X_t^{i}\le0\}\}\wedge\min_{1\le i\le N}\{\inf\{t\ge0:\,Y_t^{i}\le0\}.
    \]
    On $[0,\sigma_{(1)})$ all coordinates are strictly positive, hence both $X$ and $Y$ solve the same non-absorbed SDE on the full index set $\{1,\dots,N\}$. By Assumption \ref{ass:segment-uniq-diff} they coincide on $[0,\sigma_{(1)})$, and therefore hit zero at the same time and in the same block: the sets of indices absorbed at $\sigma_{(1)}$ agree for $X$ and $Y$.

    Proceed by induction. Suppose for some $k\ge1$ that $X$ and $Y$ coincide on $[0,\sigma_{(k)})$ and that the alive sets $A_k$ coincide at time $\sigma_{(k)}$. On $[\sigma_{(k)},\sigma_{(k+1)})$ both processes solve the same reduced non-absorbed SDE on $A_k$ with the same initial conditions and Brownian increments (shifted at $\sigma_{(k)}$). By Assumption \ref{ass:segment-uniq-diff}, they coincide on this interval, and in particular have the same next exit time and exit block. Iterating up to exhaustion of indices yields $X\equiv Y$ on $[0,T]$.

    Thus the absorbed system enjoys pathwise uniqueness. 
    \qed
    \end{proof}

\section{Proof of Theorem \ref{thm:p-s}}\label{sec:thm1}
We split this section into two parts having already proved the weak well-posedness in \ref{thm:strong-system}.In the first part we show the tightness of the empirical measures. Then, in the second part, we show Theorem \ref{thm:p-s}-2). The last claim in  Theorem \ref{thm:p-s} simply follows from uniqueness of solutions to~\eqref{eq:lim} that is, under more regularity on $b$, guaranteed by Theorem \ref{thm:lim}. 

\subsection{Tightness of $\{\mu^{N}\}_{\N \geq 1}$ and Partial Girsanov transforms}
Let us first show  the tightness of $\{\mu^{N}\}_{\N \geq 1}$.
%and of an auxiliary empirical measure which is needed in the sequel.
\begin{Lemma}
\label{Tightness:crit}
Suppose that $\mu^N_0=\frac1N\sum_{i=1}^N \delta_{Z^{N,i}}$ converges in probability to some deterministic measure $\rho\in \Pp(\R_+)$. In addition, let Assumption~\ref{ass:1}-(i) hold. Then, the sequence of random measures $\{\mu^{N}\}_{\N \geq 1}$ is tight in $\mathcal{P}(\Omega_T)$ under $\Q^N$. 
%In addition, let
%$\nu^N:= \frac{1}{N^4}\sum_{i,j,k,l=1}^N
% \delta_{X^{i,N}_.,X^{j,N}_.,X^{k,N}_.,X^{l,N}_.}$. The sequence
% $\{\nu^{N}\}$ is tight under $\Q^N$.
\end{Lemma}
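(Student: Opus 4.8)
The plan is to reduce tightness of the random empirical measures $\mu^N$ to tightness of the law of a single tagged particle, and then obtain the latter from a uniform moment (Kolmogorov--Chentsov) estimate, the key point being that \emph{under} $\Q^N$ the coefficients of the particle system are uniformly bounded, so the singular factor $\iota$ is harmless at this stage.

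First I would record the exchangeability structure. Since $(Z^{N,i})_{1\le i\le N}$ is exchangeable, the $W^i$ are i.i.d.\ and independent of the $Z$'s, and the Girsanov density $Z_T^N$ is a symmetric function of the $N$ labelled pairs (the drifts $b_t^{N,i}$ being built symmetrically), the vector $(\overline X^{N,1},\dots,\overline X^{N,N})$ is exchangeable under $\Q^N$. By the classical reduction for exchangeable systems (see \cite{Sznitman}), the family $\{\Lc_{\Q^N}(\mu^N)\}_{N\ge 1}$ is tight in $\mathcal P(\mathcal P(\Omega_T))$ as soon as $\{\Lc_{\Q^N}(\overline X^{N,1})\}_{N\ge 1}$ is tight in $\mathcal P(\Omega_T)$, so it suffices to prove the latter.

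For the single-particle tightness I would use the representation established above: under $\Q^N$,
\[
\overline X^{N,1}_t=\overline Z^{N,1}+\int_0^t \iota(\overline X^{N,1}_s)\,b_s^{N,1}(\overline X^N_s)\,\d s+\int_0^t \sqrt2\,\iota(\overline X^{N,1}_s)\,\d V^1_s,
\]
with $V^1$ a $\Q^N$-Brownian motion, $|b_s^{N,1}|\le\|b\|_{T,\infty}$ and $|\iota|\le1$. Two ingredients are then needed. (a) The initial laws are tight: Girsanov does not change the time-$0$ marginal, so $\Lc_{\Q^N}(\overline Z^{N,1})=\Lc(Z^{N,1})=\mathbb E[\mu^N_0]$; since $\mu^N_0\to\rho$ in probability with $\rho$ deterministic, $\mathbb E[\mu^N_0]\to\rho$ weakly, whence $\{\Lc(Z^{N,1})\}_{N}$ is tight. (b) A uniform modulus estimate: by Jensen on the drift term and Burkholder--Davis--Gundy on the stochastic integral, for $0\le s\le t\le T$,
\[
\mathbb E_{\Q^N}\!\big[|\overline X^{N,1}_t-\overline X^{N,1}_s|^4\big]\ \le\ C\big(\|b\|_{T,\infty}^4\,(t-s)^4+(t-s)^2\big)\ \le\ C(T,\|b\|_{T,\infty})\,(t-s)^2 .
\]
With exponents $\alpha=4$ and $1+\beta=2$, the Kolmogorov--Chentsov tightness criterion on $C([0,T])$ together with (a) gives tightness of $\{\Lc_{\Q^N}(\overline X^{N,1})\}_{N}$, and combining with the previous step concludes the proof.

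The step deserving care, and the reason the argument is set up this way, is that one cannot simply transport tightness from $\W$ to $\Q^N$: the density $Z_T^N$ is not uniformly integrable in $N$, since only $\|B_t^N(\cdot)\|_\infty\le\sqrt N\,\|b\|_{T,\infty}$ is available and its exponential moments grow with $N$, so compactness is not preserved under the change of measure. Working directly under $\Q^N$ avoids this, because there the drift of each particle is the bounded quantity $\iota\,b^{N,1}$ and the diffusion coefficient is bounded by $\sqrt2$; boundedness alone drives the estimate, and the genuine difficulty produced by the hitting-time singularity only surfaces later, in the identification of the limit, where the Partial Girsanov transforms enter.
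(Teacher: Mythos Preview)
Your proposal is correct and follows essentially the same route as the paper: reduce tightness of $\{\mu^N\}$ to tightness of the law of a single tagged particle via Sznitman's exchangeability/intensity-measure argument, then obtain the latter from a uniform fourth-moment Kolmogorov--Chentsov estimate under $\Q^N$, using only the boundedness of $b$ and $\iota$. Your write-up is in fact slightly more complete than the paper's, since you explicitly address the tightness of the initial laws (via $\mu^N_0\to\rho$) and explain why one must work directly under $\Q^N$ rather than transfer compactness through the Girsanov density.
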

\begin{proof}
By \cite[Prop. 2.2-ii]{Sznitman}, its tightness 
results from the tightness of the
intensity
measure $\{\E_{\Q^N} \mu^N(\cdot)\}_{N\ge 1}$.
%, respectively$\{\E_{\Q^N}
%\nu^N(\cdot)\}$:
%Old: See Sznitman~\cite{Sznitman}.
By symmetry, it suffices to check the
tightness of $\{\Lc(X^{N,1}_t: 0\le t\le T)\}_{N\ge 1}$. The boundedness of $b$ yields the existence of some constant $C>0$ such that
$$\E_{\Q^N}[|X_t^{N,1}-X_s^{N,1}|^4]  \leq C(t-s)^4 +  C\E_{\Q^{N}}\left[\left|\int_s^t\iota(X_u^{N,1})\d W_u^1\right|^4\right] \le C(t-s)^4 + C(t-s)^2,$$
which fulfills the proof. \qed 
\end{proof}
We notice here that the fact that $b$ is bounded does not only play an important role when showing  tightness, but it will also be crucial for the identification of the limit as it enables us to use the Girsanov transform. The fact that particles behave as stopped Brownian motions (up to a Girsanov transform) will be used in what follows. However, passing from the interacting system to the system of independent stopped Brownian motions has a cost that was not uniform in $N$ (see the existence section) and as such a full Girsanov transform is not useful when we pass to the limit as $N\to \infty$.      
Nevertheless we can use that the individual behaviours are the ones of stopped Brownian motions by introducing 
 \textit{Partial transforms} related to \eqref{eq:p-s}. 
 
For a fixed $r<N$ that corresponds to $r$ particles that are transformed to independent stopped Brownian motions and whose influence is  removed from the other $N-r$ particles. The cost of such transforms will be uniform in $N$. Although for the identification of the limit we will only use the case $r=1$, we give them below for an arbitrary $r<N$.

Let us choose, for simplicity, the first  $r$ particles to be transformed, and consider the following reference system defined on the probability space  $(E,\Ec, \G=(\Gc_t)_{t\ge 0},\W^{(r,N)})$:
\begin{equation*}
    \begin{cases}
     \displaystyle \hat{X}_t^{N,i}= Z^{N,i} + \int_0^t\iota(\hat{X}^{N,i}_s)  \d\hat W_s^i , \quad i\leq r, t\leq T \\
       \displaystyle \hat{X}_t^{N,i}= Z^{N,i} + \int_0^t\iota(\hat{X}^{N,i}_s)  \d\hat W_s^i + \iota(\hat{X}^{N,i}_s)\frac{1}{N}\sum_{j=r+1}^N \int_0^t b(s,\hat{X}_s^{N,i},\hat{X}_s^{N,j}) \iota(\hat{X}^{N,j}_s) \d s , \quad r+1\leq i\leq N, t\leq T.
    \end{cases}
\end{equation*}
Obviously, it is well-posed as it can be obtained, as in the previous section, from $\overline X$. We now  study the change of measure between $\hat X$ and $X$. The corresponding drift vector is  
\begin{equation*}
 \beta^{(r)}_t(x):= \Big(b_t^{N,1}(x), \dots, b_t^{N,r}(x), \frac{1}{N}
 \sum_{i=1}^r b(t, x^{r+1},x^{i})\iota(x^i), \dots,
\frac{1}{N} \sum_{i=1}^r  b(t, x^{N},x^{i})\iota(x^i) \Big) .
\end{equation*}
In the sequel we will need uniform w.r.t~$N$ bounds for moments of
\begin{equation} \label{def:Z^(k)}
Z_T^{(r)}:= \exp\left\{ \int_0^T \beta^{(r)}_t(\widehat{X}_t) \cdot
dW_t -\frac{1}{2}\int_0^T| \beta^{(r)}_t(\widehat{X}_t)|^2 dt \right\}.
\end{equation}
% Then,
%$$= \exp \left\{- \int_0^T \beta^{(k)}_t \cdot dW_t
%-\frac{1}{2}\int_0^T\| \beta^{(k)}_t\|^2 dt \right\}.$$

\begin{Proposition}
\label{GirsanovKparticles}
For any $T>0$, $\gamma>0$ and $r\geq 1$ 
there exists 
% $N_0 \geq r$ and
$ C(T, \gamma, r)$ s.t.
\begin{equation*}
\forall N\geq 1, \quad  \E_{\W^{(r,N)}} \exp\left\{ \gamma
\int_0^T |\beta^{(r)}_t(\widehat{X}_t)|^2dt\right\}\leq C(T, \gamma, r).
\end{equation*}
\end{Proposition}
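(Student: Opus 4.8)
The plan is to observe that, because $b$ is bounded by Assumption~\ref{ass:1}-(i), the partial drift $\beta^{(r)}$ is in fact bounded \emph{uniformly in $N$}, so the desired exponential moment follows from a purely deterministic bound — no Novikov/Girsanov argument or Krylov-type estimate is needed. This is precisely the reason for introducing the partial transform: the full drift only satisfies $|B^N_t(x)|^2=\sum_{i=1}^N|b_t^{N,i}(x)|^2\le N\,\|b\|_{T,\infty}^2$, which grows with $N$, whereas $\beta^{(r)}$ retains only $r$ ``full'' interaction terms and replaces the other $N-r$ coordinates by terms of size $O(r/N)$. The output of the proposition is then used, via Cauchy--Schwarz, to get uniform-in-$N$ moment bounds on $Z_T^{(r)}$, which is what the identification of the limit (with $r=1$) will need.

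First I would estimate the two blocks of $\beta^{(r)}_t(x)$ separately. Its first $r$ coordinates are the original drifts $b_t^{N,i}(x)=\frac1N\sum_{j=1}^N b(t,x_i,x_j)\iota(x_j)$, $i\le r$, and since $\iota\le 1$ each obeys $|b_t^{N,i}(x)|\le\|b\|_{T,\infty}$. Its remaining $N-r$ coordinates are $\frac1N\sum_{i=1}^r b(t,x^k,x^i)\iota(x^i)$, $k=r+1,\dots,N$, each bounded by $\frac rN\|b\|_{T,\infty}$. Summing the squares and using $r<N$,
\[
|\beta^{(r)}_t(x)|^2 \;\le\; r\,\|b\|_{T,\infty}^2 + (N-r)\,\frac{r^2}{N^2}\,\|b\|_{T,\infty}^2 \;\le\; r\,\|b\|_{T,\infty}^2\Big(1+\tfrac rN\Big) \;\le\; 2r\,\|b\|_{T,\infty}^2 ,
\]
a bound uniform in $x\in\R^N$ and in $N$. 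Integrating this over $t\in[0,T]$, exponentiating, and taking expectation under $\W^{(r,N)}$ immediately yields the claim with $C(T,\gamma,r):=\exp\{2\gamma Tr\,\|b\|_{T,\infty}^2\}$.

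There is no genuine obstacle here; the only point requiring a little care is the bookkeeping of which coordinates of $\beta^{(r)}$ carry the prefactor $1/N$ and which carry $r/N$, so that the sum of the $N$ squared coordinates is seen to remain $O(r)$ rather than $O(N)$. We still state and prove the result at this level of generality (arbitrary $r<N$, constant independent of $N$) because it is exactly the form invoked with $r=1$ in the next subsection, and because in a singular-kernel setting such as~\cite{MT2020} the analogous estimate is the substantive part of the argument, whereas boundedness of $b$ trivialises it here.
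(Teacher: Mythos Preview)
Your proof is correct and follows essentially the same approach as the paper: both compute $|\beta^{(r)}_t(x)|^2 \le r\|b\|_\infty^2 + \tfrac{(N-r)r^2}{N^2}\|b\|_\infty^2$ from the bounded\-ness of $b$, then bound this by $2r\|b\|_\infty^2$ and arrive at the same constant $C(T,\gamma,r)=\exp\{2\gamma T r\|b\|_\infty^2\}$.
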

\begin{proof}
    Observe that 
    $$ |\beta^{(r)}_t(x)|^2= \sum_{i=1}^r |b_t^{N,i}(x)|^2 + \frac{1}{N^2} \sum_{j=r+1}^N \Big( \sum_{i=1}^r b(t, x^{j+1},x^i) \iota(x^i)\Big)^2\leq r\|b\|^2_\infty + \frac{(N-r)r^2}{N^2}\|b\|^2_\infty $$
    This yields the desired result with $C(T,\gamma, r)= e^{2\gamma Tr\|b\|^2_\infty}$. \qed 
\end{proof}

\subsection{Identification of the limit}\label{ssec:identification}

Now we are ready to identify the limit. Having showed that $\{\mu^{N}\}_{N \geq 1}$ is tight,  we may extract a convergent subsequence, which is still denoted by $\{\mu^{N}\}_{N \geq 1}$, of limit $\mu$. Since $\mu^N \to \mu$ in law in $\mathcal{P}(\Omega)$, we have that $\mu^N\times \mu^N \to \mu \times\mu $.

It  remains to prove that $\mu$
is a weak solution to \eqref{eq:lim}.   
 For any $\varphi\in C^2_c(\R)$, the process $(M_t^\varphi)_{t\geq 0}$, defined in the martingale problem, should be a $\mu$ martingale. To this end, it suffices to show that for all $t>s>0$, all continuous bounded function $\Phi\in C(\Omega_s)$, we have $\Psi(\mu)=0$ a.s., where
for $\Q \in \Pp(\Omega)$,
\be* 
\Psi(\Q):=
\E^\Q\left[\Phi((F_r)_{r\in [0,s]})
\Big(\varphi(F_t)-\varphi(F_s)-
\int_s^t\iota(F_u) \Big[  \varphi''(F_u)+ \varphi'(F_u)\int_{\R}  b(u, F_u,y)   \iota(y)  \Q_u(\d y)  \Big]\d u \Big)\right], 
\ee* 
We observe that for any $\Q\in \Pp(\Omega)$, it holds that $\Psi(\Q)=\Theta(\Q\otimes \Q)$, where we define for $\Pi\in \Pp(\Omega^2)$
\begin{align*}
\Theta(\Pi):=\int_{\Omega^2}\!\!\!\!\!
\Phi((x_r)_{r\in [0,s]})
\Big(\varphi(x_t)-\varphi(x_s)-
\int_s^t \iota(x_u)\Big[  \varphi''(x_u)  + \varphi'(x_u)b(u, x_u,y_u)\iota(y_u)\Big]\d u \Big) \Pi(\d x,\d y).
\end{align*}
We separate the rest of the proof in several steps.

{\it Step 2.1.} Here we show that for some constant $A$, for all $N\geq 2$,
\begin{equation}\label{scc1}
\E \Big[ [\Theta(\mu^N\times \mu^N)]^2  \Big] \leq \frac {A} N.
\end{equation}
Applying Itô's formula on $\varphi(X_t^{N,i})$ and summing in $N$ we have that
\be* 
&& \frac{1}{N} \sum_{i=1}^N\Big[\varphi(X_t^{N,i}) - \varphi(Z^{N,i}) -\int_0^t \iota(X_s^{N,i})\Big[\varphi''(X_s^{N,i})- \varphi'(X_s^{N,i}) \sum_{j=1}^N \frac{1}{N} b (s, X_s^{N,i},X_s^{N,j}) \iota(X_s^{N,j})\d s \Big ] \\
&=&\frac{1}{N} \sum_{i=1}^N\int_0^t \iota(X_s^{N,i})\varphi'(X_s^{N,i})  \d W_s^i.
\ee* 
We recognise in the left hand side of the above expression parts of $\Theta(\mu^N\times \mu^N)$.
Hence
$$\E \Big[ [\Theta(\mu^N\times \mu^N)]^2  \Big] \leq \frac{T\|\varphi'\|^2_\infty \|\Phi\|_\infty^2}{N}.$$
{\it Step 2.2.} for any $\eta \in (0,1]$, we define a regularised version of the function $\iota$
by $f_\eta$, e.g. 
\be*
f_\eta := \min\left(1, C_\eta\left(\frac{x^3}{3}+\frac{x^5}{5}-\frac{x^4}{2}\right)^+\right), \quad \mbox{with } C_\eta:=\frac{30}{10\eta + 6\eta^5 - 15\eta^3}.
\ee* 
It is straightforward to check that $f_\eta$ is twice continuously differentiable on $\R$ and takes values in $[0,1]$.
Accordingly, we define $\Theta_\eta$. We have
\begin{align*}
\E[|\Psi(\mu)|]
\leq & \E[|\Theta(\mu\otimes\mu)-\Theta_\eta(\mu\otimes \mu)|]+ \limsup_N| \E[|\Theta_\eta(\mu\times \mu)|]- \E[|\Theta_\eta(\mu^N\otimes \mu^N)|]|\\
&+ \limsup_N \E[|\Theta_\eta (\mu^N\otimes\mu^N)-\Theta(\mu^N\otimes\mu^N)|] + \limsup_N \E[|\Theta(\mu^N\otimes \mu^N)|].
\end{align*}
By {\it Step 2.1.} the last term converges to zero as $N\to \infty$. The second term converges to zero by weak convergence and the fact that $\Theta_\eta$ is continuous and bounded (this is where the continuity in space of $b$ plays a role).  In the next step we will prove that the remaining terms are uniformly small w.r.t. $\eta$.

{\it Step 2.3.} Let us start with

$$I_\eta:= \limsup_N \E[|\Theta_\eta (\mu^N\otimes\mu^N)-\Theta(\mu^N\otimes\mu^N)|] .  $$

Observe that 
\begin{align*}
    |\Theta_\eta (\mu^N\otimes\mu^N)-\Theta(\mu^N\otimes\mu^N)|&\leq \phi_\infty  \varphi''_\infty \int_s^t \frac{1}{N} \sum  |\iota(X_u^{N,i}) - f_\eta(X_u^{N,i}) | \d u \\
    &  +\|\phi\|_\infty \|\varphi'\|_\infty \|b\|_\infty \int_s^t \frac{1}{N^2} \sum_{i,j}   |\iota(X_u^{N,i})\iota(X_u^{N,j}) - f_\eta(X_u^{N,i}) f_\eta(X_u^{N,j})| \d u\\
    &\leq A  \int_s^t \frac{1}{N} \sum_{i=1}^N  |\iota(X_u^{N,i}) - f_\eta(X_u^{N,i}) | \d u ,
\end{align*}
for some $A>0$ that may change from line to line.
 This leads by exchangabilty to 
  \be* 
  \E[|\Theta_\eta (\mu^N\otimes\mu^N)-\Theta(\mu^N\otimes\mu^N)|]\leq  A \int_s^t \E|\iota(X_u^{N,1}) - f(X_u^{N,1}) | \d u \leq A\int_s^t \Q [0<X_u^{N,1}<\eta] \d u.
  \ee* 
 Now, we use the partial transformation to  transform $X^{1,N}$ to a stopped Brownian motion. This leads to 
\be* 
\E[|\Theta_\eta (\mu^N\otimes\mu^N)-\Theta(\mu^N\otimes\mu^N)|]\leq  A\int_s^t \E_{\Q^{(1,N)}}[Z_T^{(1)}\1_{\{X_u^{N,1}\in(0, \eta)\}}] \d u.
\ee* 
Now, using Proposition \ref{GirsanovKparticles} for $r=1$, we get
\be* 
\E[|\Theta_\eta (\mu^N\otimes\mu^N)-\Theta(\mu^N\otimes\mu^N)|]\leq  A\int_s^t \P[0<W_{\tau \wedge u }<\eta]^{1/2}\d u
\ee* 
This leads us to the control of $\P[W_{\tau \wedge u } \in (0, \eta)]$. The latter converges,  for every fixed $u>0$, to zero as $\eta \to 0$. By dominated convergence we can conclude the desired result. 

For the term that  remains, we use Fatou's lemma and the above computation.
\be*
    \E[|\Theta_\eta (\mu\otimes\mu)-\Theta(\mu\otimes\mu)|] &\leq & C \int_s^t \int_{\Omega^2} \1_{\{f(u) \in (0, \eta)\}}\d u \mu\otimes \mu (\d f,\d y)\\ 
    &\leq &  C\lim_N  \int _{\Omega}\int_s^t  \1_{\{f(u) \in (0, \eta)\}}\d u\mu^N (\d f)
\ee* 
Repeat the above Partial Girsanov transform argument to conclude the proof of 3.

Finally, note that the subset $\{f\in\Omega: f(t)\ge 0, \forall t\in\R_+\}$ is closed. Applying the Portmanteau theorem, we may conclude  $\Q[\{f\in\Omega: f(t)\ge 0, \forall t\in\R_+\}]\ge \limsup_{N\to\infty}\Q^N[\{f\in\Omega: f(t)\ge 0, \forall t\in\R_+\}]=1$ and thus $\Q[F_t\ge 0,~ \forall t\ge 0]=1$. 

Next, we show in Section \ref{sec:thm2} that the mean-filed SDE \eqref{eq:lim} admits    a unique strong solution which is clearly a weak solution. Combined with Theorem \ref{thm:p-s}-2), this yields the propagation of chaos and thus proves 
 Theorem \ref{thm:lim}-3).

\section{Proof of Theorem \ref{thm:lim}}\label{sec:thm2}
We prove in this section the well-posedness of~\eqref{eq:lim}. It is worth noting that, thanks to Section~\ref{ssec:identification}, there exists at least a weak solution to \eqref{eq:lim} under the hypothesis of Theorem~\ref{thm:p-s}-2). Nevertheless, to obtain the uniqueness result for \eqref{eq:lim}, we need more subtle properties for its solution, which requires more conditions on the coefficient $b$. This yields, as a by-product,  the existence of a (strong) solution to \eqref{eq:lim}. Therefore, we combine in the first part of Section \ref{sec:thm2} the derivation of the desired properties and the existence of solutions to \eqref{eq:lim} using a fixed-point argument. The second part is devoted to the uniqueness. In the last subsections we give proofs of some technical lemmas.

Throughout this section we fix an arbitrary time horizon $T>0$. 

\subsection{Proof of the existence}\label{ssec:existence}
We first deal with the existence. Let $\Pc\subset \Pc(\R_+)$ be the set of probability measures on $\R_+$ of finite first order moment. Define the set of probability flows
\be* 
\Pc_T:=\big\{\mu=(\mu_t)_{0\le t\le T}:~ \mu_t\in\Pc\big\}
\ee* 
and recall $\Omega_T$ is the space of continuous functions on $[0,T]$. We endow $\Pc_{T}$ with the metric $\Wc_T(\mu,\nu):=\sup_{0\le t\le T}  \Wc(\mu_t,\nu_t)$ and endow $\Omega_T$ with the uniform norm $\|\cdot \|_T$. Let $d_T$ be the distance on $\Pc_{T}\times \Omega_T$ defined by $d_T((\mu,f),(\nu,g)):=\Wc_T(\mu,\nu)+\| f -g\|_T$. Rewriting \eqref{eq:lim} in differential form, one has 
\be* 
\d X_t =  \iota(X_t)\left[ \left(\int_{(0,\infty)} b(t,X_t,y)\mu_t(\d y)\right)\d t +  \sqrt{2}\d W_t \right] 
= \1_{\{\tau>t\}}\left[ B\big(t,X_t,\mu_t,\mu_t(\R_+^*)\big)\d t +  \sqrt{2}\d W_t \right], 
\ee* 
where $\mu_t:=\Lc(X_t)$, $\tau:=\inf\{t\ge 0:~ X_t\le 0 \}$, $\R_+^*:=(0,\infty)$ and $B: \R_+\times\R\times\Pc\times [0,1]\to\R$ is given as
\be* 
B(t,x,\lambda,a):=\int_{\R_+}b(t,x,y)\lambda(\d y) - b(t,x,0)(1-a). 
\ee* 
Let us introduce the operator $\Gamma$ on $\Pc_T\times C_T$ defined by
\be* 
\Gamma(\mu,f):=\Big(\big(\Lc(Y^{\mu,f}_{t\wedge \tau^{\mu,f}})\big)_{0\le t\le T},~ \alpha^{\mu,f}\Big),
\ee* 
where $\alpha^{\mu,f}(t):=\P[\tau^{\mu,f}> t]$, $\tau^{\mu,f}:=\inf\{t\ge 0:~ Y^{\mu,f}_t\le 0\}$ and 
\beq\label{sde-freeze} 
Y^{\mu,f}_t = Z + \int_0^t B(s,Y^{\mu,f}_s,\mu_s,f(s))\d s + \sqrt{2}W_t,\quad \forall t\in [0,T].
\eeq  
For any $(\mu,f)\in \Pc_T\times C_T$, the SDE \eqref{sde-freeze} has a unique solution and thus $\Gamma$ is well defined. Next we point out that any fixed point of $\Gamma$ allows us to construct a solution to \eqref{eq:lim}. Namely, let $(\mu,f)$ be a fixed point of $\Gamma$, i.e. $\Lc(Y^{\mu,f}_{t\wedge \tau^{\mu,f}})=\mu_t$ and $\alpha^{\mu,f}(t)=\P[\tau^{\mu,f}> t]=f(t)$. Then a straightforward verification yields
\be* 
\d Y^{\mu,f}_{t\wedge \tau^{\mu,f}} &=& \1_{\{\tau^{\mu,f}>t\}}\left[ B(t,Y^{\mu,f}_t,\mu_t,f(t))\d t+ \sqrt{2}\d W_t\right] \\
&=&\iota\big(Y^{\mu,f}_{t\wedge \tau^{\mu,f}}\big)\left[ \left(\int_{(0,\infty)} b(t,Y^{\mu,f}_{t\wedge \tau^{\mu,f}},y)\mu_t(\d y)\right)\d t + \sqrt{2}\d W_t\right],\quad \forall t\in[0,T],
\ee* 
which implies that  $(Y^{\mu,f}_{t\wedge \tau^{\mu,f}})_{0\le t\le T}$ solves \eqref{eq:lim} up to time $T$. Therefore, the proposition below ensures a  fixed point of $\Gamma$ and thus a  solution to \eqref{eq:lim} on $[0,T]$.  For each $L>0$, set
\be*
\Pc_{T}^L &:=& \left\{\nu\in\Pc_T:~ \sup_{0\le s<t\le T} \left(\int_{\R_+} x^2 \nu_t(\d x) + \frac{\Wc(\nu_t,\nu_s)}{|t-s|^{1/6}}\right)\le L\right\}  \\ \Omega_T^L &:=& \left\{f\in \Omega_T: ~ \sup_{0\le s<t\le T}  \frac{|f(t)-f(s)|}{|t-s|^{1/6}}\le L\right\}.
\ee*
Then we have the following proposition.
\begin{Proposition}\label{prop:fixed-point}
Under suitable conditions, $\Gamma$ has a fixed point in $\Pc_T\times \Omega_T$.
\end{Proposition}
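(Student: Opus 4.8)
The plan is to apply Schauder's fixed-point theorem to the operator $\Gamma$ restricted to a suitable compact convex subset of $\Pc_T\times \Omega_T$. The natural candidate is $\Pc_T^L\times\Omega_T^L$ for $L$ large enough. First I would check that $\Pc_T^L\times\Omega_T^L$ is nonempty, convex, and compact: convexity is immediate since all the defining constraints (second moment bound, H\"older-$1/6$ bound on the flow, H\"older-$1/6$ bound on $f$) are convex; compactness follows from the Arzel\`a--Ascoli theorem for $\Omega_T^L$ and from the analogous statement for probability-measure flows (uniform H\"older continuity in $\Wc$ plus tightness coming from the uniform second-moment bound, via Prokhorov). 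Here one should be careful to equip $\Pc_T^L$ with $\Wc_T$ and note that the second-moment bound forces tightness of each $\nu_t$ while the equicontinuity gives tightness of the whole flow in $C([0,T];(\Pc,\Wc))$.

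Next I would show that $\Gamma$ maps $\Pc_T^L\times\Omega_T^L$ into itself for a well-chosen $L$. The key point is that $B$ is bounded: since $b$ is bounded (Assumption~\ref{ass:1}-(i) suffices for this part, and (iii) gives more), $\|B\|_\infty\le 2\|b\|_\infty=:K$. Then $Y^{\mu,f}_t = Z+\int_0^t B(s,Y^{\mu,f}_s,\mu_s,f(s))\,\d s+\sqrt 2 W_t$ is a Brownian motion plus a drift bounded by $K$, so standard moment estimates give $\E[(Y^{\mu,f}_{t\wedge\tau})^2]\le C(1+\E[Z^2]+T)$ uniformly, and for the increments
\[
\Wc\big(\Lc(Y^{\mu,f}_{t\wedge\tau}),\Lc(Y^{\mu,f}_{s\wedge\tau})\big)\le \E\big|Y^{\mu,f}_{t\wedge\tau}-Y^{\mu,f}_{s\wedge\tau}\big|\le K|t-s|+\sqrt2\,\E|W_{t\wedge\tau}-W_{s\wedge\tau}|\le C|t-s|^{1/2},
\]
which is in particular $\le C|t-s|^{1/6}$ on $[0,T]$; absorbing $T$-dependent constants, one picks $L$ so that both the measure-flow component and $\alpha^{\mu,f}(t)=\P[\tau^{\mu,f}>t]$ satisfy the required bounds. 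For $\alpha^{\mu,f}$ the H\"older estimate comes from $|\P[\tau>t]-\P[\tau>s]| = \P[s<\tau\le t]$, which is controlled by the probability that the drifted Brownian motion, started from $\rho$ with a density, is near $0$ on $[s,t]$ — bounded using the density $\rho$ and a Girsanov comparison with Brownian motion, giving a modulus $\lesssim |t-s|^{1/2}$ (here the assumption that $\rho$ has a density is essential, otherwise no such modulus holds near $t=0$).

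Finally I would prove continuity of $\Gamma$ on $\Pc_T^L\times\Omega_T^L$ with respect to $d_T$. Given $(\mu^n,f^n)\to(\mu,f)$, one compares the SDEs \eqref{sde-freeze} driven by the same $(Z,W)$: by Gronwall applied to $\E\sup_{t\le T}|Y^{\mu^n,f^n}_t-Y^{\mu,f}_t|$ and using Lipschitz/continuity properties of $B$ in its measure and $[0,1]$ arguments (which follow from Assumption~\ref{ass:1}-(iii), boundedness and H\"older continuity of $b$, plus $\Wc_T(\mu^n,\mu)\to0$ and $\|f^n-f\|_T\to0$), one gets $\E\sup_{t\le T}|Y^{\mu^n,f^n}_t-Y^{\mu,f}_t|\to0$. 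This controls the first component of $\Gamma$ in $\Wc_T$ directly. For the second component one must pass from convergence of the processes to convergence of the hitting-time survival functions $\alpha^{\mu^n,f^n}\to\alpha^{\mu,f}$ uniformly on $[0,T]$; this is the main obstacle, since hitting times are not continuous functionals of the path. The way around it is precisely that the limiting process $Y^{\mu,f}$ started from a law with a density spends zero Lebesgue-time at $0$ and a.s.\ does not have $0$ as a two-sided local extremum approached tangentially — more concretely, $\P[\tau^{\mu,f}=t]=0$ for every $t$ and $Y^{\mu,f}$ crosses $0$ transversally a.s.\ — so the map path $\mapsto$ hitting time is a.s.\ continuous at $Y^{\mu,f}$, and one upgrades $L^1$ path convergence (along a subsequence, a.s.\ uniform convergence) to convergence in probability of $\tau^{\mu^n,f^n}\to\tau^{\mu,f}$, hence of the distribution functions; equicontinuity of the $\alpha$'s (from the previous paragraph) turns pointwise into uniform convergence. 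Once continuity and the self-mapping property are established, Schauder's theorem yields a fixed point in $\Pc_T^L\times\Omega_T^L\subset\Pc_T\times\Omega_T$, proving the proposition.
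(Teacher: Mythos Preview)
Your proposal is essentially the paper's own argument: Schauder's fixed-point theorem applied to $\Gamma$ on $\Pc_T^L\times\Omega_T^L$, with (i) compactness of this set (Arzel\`a--Ascoli for $\Omega_T^L$; Prokhorov plus equicontinuity in $\Wc$ for $\Pc_T^L$), (ii) the self-mapping property via moment bounds and a Girsanov comparison with stopped Brownian motion, and (iii) continuity of $\Gamma$ via Gronwall on the auxiliary SDE together with a hitting-time stability argument, upgraded to uniform convergence by the equicontinuity from step (ii). The paper organises exactly these three ingredients as Lemmas~\ref{lem:compact}, \ref{lem:image}, \ref{lem:continuity}; your ``transversal crossing'' reasoning for $\alpha^{\mu^n,f^n}\to\alpha^{\mu,f}$ is the same as the paper's decomposition $\{\tau>t\}=\bigcup_k\{Y_s\ge 1/k,\ \forall s\le t\}$.

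One point to tighten: you assert a modulus $|t-s|^{1/2}$ for $\alpha^{\mu,f}$, but with only a density assumption on $\rho$ (no boundedness) this exponent is not obvious---after Girsanov and Cauchy--Schwarz one is left with $\big(\int\rho(x)[\Phi(x/\sqrt{2s})-\Phi(x/\sqrt{2t})]\,\d x\big)^{1/2}$, and near $s=0$ the inner integral need not be $O(\Delta t)$. The paper records the weaker exponent $1/6$ (hence the choice of the H\"older-$1/6$ class in the definition of $\Pc_T^L,\Omega_T^L$); any positive exponent suffices for the scheme, so this is only a calibration issue, not a structural one.
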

\begin{proof}
The proof of Proposition \ref{prop:fixed-point} is a combination of the lemmas below. More precisely, Lemma \ref{lem:compact} shows that $\Gamma(\Pc^L_T\times \Omega^L_T)\subset\Gamma(\Pc_T\times \Omega_T)\subset \Pc^L_T\times \Omega^L_T$ for some $L>0$ large enough. As $\Pc^L_T\times \Omega^L_T$ is compact and $\Gamma$ is continuous with respect to $d_T$ in view of Lemmas \ref{lem:compact} and \ref{lem:continuity}, Schauder's fixed-point theorem allows to show the existence of $(\mu,f)\in \Pc^L_T\times \Omega^L_T$ such that $\Gamma(\mu,f)=(\mu,f)$. \qed
\end{proof}
Let $\Phi: \R\to [0,1]$ the cumulative distribution function of a standard normal distribution, i.e. 
\be* 
\Phi(x):=\int_{-\infty}^x \frac{1}{\sqrt{2\pi}}e^{-y^2/2}\d y,\quad \forall x\in\R. 
\ee* 
\begin{Lemma}\label{lem:compact}
 $\Pc^L_{T}\times \Omega^L_T$ is $d_T-$compact for every $L>0$.
\end{Lemma}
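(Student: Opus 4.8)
\textbf{Proof plan for Lemma~\ref{lem:compact}.}

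The plan is to show that $\Pc^L_T$ and $\Omega^L_T$ are each compact in their respective metrics, from which $d_T$-compactness of the product follows immediately (the product topology on $\Pc_T\times\Omega_T$ induced by $d_T$ coincides with the product of the two factor topologies). Both factors are sets of uniformly $1/6$-H\"older maps with a quantitative modulus, so the natural tool is an Arzel\`a--Ascoli argument, adapted to the metric space $(\Pc,\Wc)$ for the first factor.

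First I would handle $\Omega^L_T$. Its elements are real-valued functions on $[0,T]$ that are equicontinuous (uniform modulus $|t-s|^{1/6}$ scaled by $L$); I still need pointwise boundedness to invoke Arzel\`a--Ascoli. This is \emph{not} immediate from the definition of $\Omega^L_T$ alone, so here I would use the specific structure: in the application $f=\alpha^{\mu,f}$ is a survival probability, hence $f(0)\le 1$ and $f$ takes values in $[0,1]$; but since the lemma is stated for an abstract $L$, I would instead note that any $f\in\Omega^L_T$ obtained as a value of $\Gamma$ satisfies $f(0)=\P[\tau>0]$, and more robustly that $\Gamma$'s second component always lies in $[0,1]$, so it suffices to prove compactness of $\{f\in\Omega_T:\ \|f\|_\infty\le 1,\ \sup_{s\ne t}|f(t)-f(s)|/|t-s|^{1/6}\le L\}$; this set is closed under $\|\cdot\|_T$ and relatively compact by Arzel\`a--Ascoli, hence compact. (If one wants the literal statement for arbitrary $f\in\Omega_T^L$ one replaces the $[0,1]$ bound by a bound on $f(0)$; I would make this explicit.) Then I would turn to $\Pc^L_T$: fix a sequence $(\mu^n)$ in $\Pc^L_T$. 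The uniform second-moment bound $\int x^2\,\mu^n_t(\d x)\le L$ gives, for each fixed $t$, tightness of $\{\mu^n_t\}_n$; combined with the equicontinuity $\Wc(\mu^n_t,\mu^n_s)\le L|t-s|^{1/6}$ this is exactly the hypothesis of the Arzel\`a--Ascoli theorem for maps $[0,T]\to(\Pc_2,\Wc)$ valued in the (relatively compact, for $\Wc$) set of measures with second moment $\le L$. A diagonal extraction over a countable dense set of times followed by the equicontinuity estimate produces a subsequence converging in $\Wc_T$ to a limit $\mu$; passing to the limit in $\int x^2\,\mu^n_t(\d x)\le L$ (lower semicontinuity of $\mu\mapsto\int x^2\,\d\mu$ under weak convergence, which is $\Wc$-convergence here) and in the H\"older bound (continuity of $\Wc$) shows $\mu\in\Pc^L_T$. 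Hence $\Pc^L_T$ is sequentially compact, and being metric, compact.

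Finally I would combine: $\Pc^L_T\times\Omega^L_T$ with the metric $d_T(\cdot,\cdot)=\Wc_T(\cdot,\cdot)+\|\cdot-\cdot\|_T$ is the product of two compact metric spaces, hence compact.

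\textbf{Main obstacle.} The delicate point is the uniform second-moment control entering $\Pc^L_T$: it is what upgrades pointwise tightness to pointwise relative compactness in $\Wc$ (recall $\Wc_2$-convergence is strictly stronger than weak convergence, requiring convergence of second moments), and it must be shown to pass to the limit as a closed constraint. For $\Omega^L_T$ the only subtlety is securing pointwise boundedness, which the definition does not literally provide and which I would supply from the range of $\Gamma$ as indicated above; once that is in place the Arzel\`a--Ascoli argument is routine.
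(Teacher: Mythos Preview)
Your approach is essentially the same as the paper's: Arzel\`a--Ascoli for $\Omega^L_T$, and for $\Pc^L_T$ a diagonal extraction over a countable dense set of times combined with the uniform second-moment bound and the H\"older equicontinuity, then passing the constraints to the limit (the paper invokes Fatou for the second moment, exactly as you suggest via lower semicontinuity).

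Two remarks. First, you are right to flag the $\Omega^L_T$ issue: as literally defined in the paper, $\Omega^L_T$ carries only the H\"older seminorm bound and no anchor on $f(0)$, so it contains all constants and is not compact. The paper simply asserts ``The compactness of $\Omega^L_T$ is straightforward by Arzel\`a--Ascoli's theorem'' and does not address this; your proposed fix (restrict to $[0,1]$-valued functions, which is where the second component of $\Gamma$ actually lives) is the correct repair and is what is implicitly needed for Schauder's theorem in Proposition~\ref{prop:fixed-point}. So here you have in fact been more careful than the paper.

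Second, a small correction on the metric: in this paper $\Wc$ is the $1$-Wasserstein distance (the space $\Pc$ is defined via finite \emph{first} moment), not $\Wc_2$. The role of the uniform bound $\int x^2\,\nu^n_t(\d x)\le L$ is to give uniform integrability of $x\mapsto x$, which upgrades weak convergence to $\Wc_1$-convergence; it is not that one needs convergence of second moments. Your argument goes through verbatim once you replace ``$\Wc_2$'' by ``$\Wc_1$'' and read the second-moment bound as providing uniform integrability rather than relative compactness in $\Wc_2$.
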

\begin{proof}
The compactness of $\Omega^L_T$ is straightforward by Arzel\`a-Ascoli's theorem.  It remains to treat $\Pc^L_T$. Pick an arbitrary sequence $(\nu^n)_{n\ge 1}\subset \Pc^L_T$ and let us show that it has a convergent subsequence of limit belonging to $\Pc^L_T$. For each $t\in [0,T]$, note that there exists a weakly convergent subsequence of  $(\nu_t^n)_{n\ge 1}$ as it is uniformly integrable. Using the diagonal arguments, one may extract some subsequence, still denoted by $(\nu^n)_{n\ge 1}$ for the sake of simplicity, such that $(\nu_t^n)_{n\ge 1}$ converges weakly for each $t\in [0,T]\cap\Q$. Combined with the fact
\be*
\int_{\R_+}x^2\nu_t^n(\d x) \le L,\quad \forall t\in [0,T],~ n\ge 1, 
\ee*
the above weak convergence is indeed the convergence under $\Wc$, i.e. $\lim_{m, n\to\infty}\Wc(\nu^m_t,\nu^n_t)=0$ for all $t\in [0,T]\cap\Q$. Next, for each $t\in [0,T]$, take a sequence $(t_k)_{k\ge 1}\subset [0,T]\cap\Q$ converging to $t$. For every $\eps>0$, one has
\be*
\lim_{m,n\to\infty} \Wc(\nu^m_t,\nu^n_t) \le \lim_{m,n\to\infty} \big(\Wc(\nu^m_{t_k},\nu^m_{t}) + \Wc(\nu^m_{t_k},\nu^n_{t_k}) + \Wc(\nu^n_{t_k},\nu^n_{t})\big) \le 2L|t_k-t|^{1/6},
\ee*
which shows that $(\nu^n_t)_{n\ge 1}$ is a Cauchy sequence and thus converges under $\Wc$. Denote $\nu\equiv (\nu_t:=\lim_{n\to\infty}\nu^n_t)_{0\le t\le T}\in \Pc_T$. One deduces thus 
 $\Wc(\nu_t,\nu_s) \le L|t-s|^{1/6}$ for all $t,s \in [0,T]$ and further  $\lim_{n\to\infty} \Wc_T(\nu^n,\nu)=0$. 
Finally, by Fatou's lemma, one has for every $t\in [0,T]$
\be*
\int_{\R_+} x^2\nu_t(\d x) \le \liminf_{n\to\infty} \int_{\R_+} x^2 \nu^n_t(\d x) \le L,
\ee*
which fulfills the proof. \qed
\end{proof}
\begin{Lemma}\label{lem:image}
There exists some constant $L>0$ such that $\Gamma(\Pc_T\times \Omega_T) \subset \Pc^L_{T}\times \Omega^L_T$.
\end{Lemma}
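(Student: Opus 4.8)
The plan is to establish the two-sided bound $\Gamma(\Pc_T\times\Omega_T)\subset\Pc^L_T\times\Omega^L_T$ by producing, uniformly over $(\mu,f)\in\Pc_T\times\Omega_T$, the following estimates for the solution $Y=Y^{\mu,f}$ of \eqref{sde-freeze}: (a) a second-moment bound $\sup_{0\le t\le T}\E[Y_{t\wedge\tau}^2]\le L$; (b) a $1/6$-Hölder bound $\Wc\big(\Lc(Y_{t\wedge\tau}),\Lc(Y_{s\wedge\tau})\big)\le L|t-s|^{1/6}$; and (c) a $1/6$-Hölder bound $|\alpha^{\mu,f}(t)-\alpha^{\mu,f}(s)|\le L|t-s|^{1/6}$ for $\alpha^{\mu,f}(t)=\P[\tau>t]$. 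The key structural fact I would exploit throughout is that the drift $B(t,x,\mu_t,f(t))$ appearing in \eqref{sde-freeze} is bounded by a constant depending only on $\|b\|_\infty$ (indeed $|B(t,x,\lambda,a)|\le 2\|b\|_\infty$), since $b$ is bounded under Assumption \ref{ass:1}-(iii); this makes all constants uniform in $(\mu,f)$, which is exactly what is needed.

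First I would prove (a). Write $Y_{t\wedge\tau}=Z+\int_0^{t\wedge\tau}B(s,Y_s,\mu_s,f(s))\d s+\sqrt2 W_{t\wedge\tau}$, use $(a+b+c)^2\le 3(a^2+b^2+c^2)$, the boundedness of $B$, and $\E[W_{t\wedge\tau}^2]\le\E[\tau\wedge t]\le T$, together with $\E[Z^2]<\infty$ (which follows since $\rho$ has a density but more to the point we may take $\rho\in\Pc$; in fact the relevant point is $\rho$ has finite second moment, which I would assume or note follows from the standing hypotheses). This gives $\sup_{t\le T}\E[Y_{t\wedge\tau}^2]\le 3\E[Z^2]+3\cdot4\|b\|_\infty^2T^2+6T=:L_1$. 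For (b), for $s<t$ I would estimate $\Wc(\Lc(Y_{t\wedge\tau}),\Lc(Y_{s\wedge\tau}))\le \E[|Y_{t\wedge\tau}-Y_{s\wedge\tau}|]$ using the coupling given by the same path. One has $Y_{t\wedge\tau}-Y_{s\wedge\tau}=\int_{s\wedge\tau}^{t\wedge\tau}B\,\d u+\sqrt2(W_{t\wedge\tau}-W_{s\wedge\tau})$, so $\E|Y_{t\wedge\tau}-Y_{s\wedge\tau}|\le 2\|b\|_\infty(t-s)+\sqrt2\,\E|W_{t\wedge\tau}-W_{s\wedge\tau}|$, and by the optional stopping / Burkholder bound $\E|W_{t\wedge\tau}-W_{s\wedge\tau}|\le(\E[(t\wedge\tau)-(s\wedge\tau)])^{1/2}\le(t-s)^{1/2}$. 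Since $t-s\le(t-s)^{1/6}T^{5/6}$ on $[0,T]$ and similarly $(t-s)^{1/2}\le(t-s)^{1/6}T^{1/3}$, this yields $\Wc_T$-modulus bounded by $L_2|t-s|^{1/6}$ with $L_2$ depending only on $\|b\|_\infty,T$.

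The remaining and genuinely delicate point is (c), the Hölder continuity in time of $t\mapsto\P[\tau>t]$, because a priori nothing prevents the survival function from dropping sharply. Here I would use that $Y$ before $\tau$ is a diffusion with bounded drift and constant nondegenerate diffusion coefficient $\sqrt2$, i.e. a Brownian motion with bounded drift started from $Z$. Via a Girsanov change of measure removing the drift $B$ — legitimate since $B$ is bounded, so the exponential martingale has all moments — the law of $(Y_u)_{u\le t}$ is mutually absolutely continuous with that of $Z+\sqrt2 W$, with Radon–Nikodym density having an $L^2$ bound uniform in $(\mu,f)$. Then $\P[\tau\in(s,t]]=\E[\,\mathbf 1_{\{\tau>s\}}\mathbf 1_{\{\inf_{s<u\le t}Y_u\le 0\}}]$, and under the reference (driftless) measure one can bound $\P[s<\tilde\tau\le t\,|\,\Fc_s]$ in terms of the distance $Y_s$ from the barrier. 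The cleanest route: $\P[\tau\in(s,t]]\le C\,\P_{\mathrm{ref}}[\tilde\tau\in(s,t]]^{1/2}$ by Cauchy–Schwarz with the density, and for Brownian motion with a starting density that can be controlled (here $\rho$ has a density, and for $t>0$ the law of $Y_t$ has a bounded density by the same Girsanov comparison with the Gaussian), one has the classical bound $\P_{\mathrm{ref}}[\tilde\tau\in(s,t]]\le C\int_0^\infty \big(\text{density of }Y_s\text{ near }0\big)\cdot\P_x[\text{hit }0\text{ in time }t-s]\,\d x\lesssim |t-s|^{1/2}$, giving $|\alpha^{\mu,f}(t)-\alpha^{\mu,f}(s)|\le C|t-s|^{1/4}\le C T^{1/12}|t-s|^{1/6}$. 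I expect this step — making the time-regularity of the hitting-time law uniform in the parameters via the Girsanov/heat-kernel comparison — to be the main obstacle; everything else is a routine application of boundedness of $b$ and elementary moment inequalities. Taking $L:=\max(L_1,L_2,L_3)$ (or the sum) and noting $\alpha^{\mu,f}\in\Omega_T$ automatically, we conclude $\Gamma(\Pc_T\times\Omega_T)\subset\Pc^L_T\times\Omega^L_T$.
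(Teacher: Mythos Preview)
Your proposal is correct and follows essentially the same approach as the paper: the same three estimates --- second moment via boundedness of $B$, Wasserstein modulus via $\E|Y_{t\wedge\tau}-Y_{s\wedge\tau}|\le C\sqrt{|t-s|}$, and H\"older continuity of $\alpha$ via a Girsanov change of measure removing the bounded drift followed by Cauchy--Schwarz/H\"older to reduce to a driftless Brownian hitting-time estimate. The only cosmetic difference is in this last step: the paper writes the reference probability directly as $\int_{\R_+}\big(\Phi(x/\sqrt{2t})-\Phi(x/\sqrt{2(t+\Delta t)})\big)\rho(\d x)$ and asserts a $\Delta t^{1/3}$ bound, whereas you condition on $\Fc_s$ and use the Gaussian-smoothed density at time $s$; note that your route as written yields $(t-s)^{1/2}/\sqrt{s}$ rather than a bound uniform down to $s=0$, so near $s=0$ you (like the paper) must fall back on a direct estimate involving the initial density $\rho$.
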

\begin{proof}
Take an arbitrary $(\nu,f)\in \Pc_T\times \Omega_T$. For  simplicity we drop the superscript $(\nu,f)$ without any danger of confusion, i.e. $Y\equiv Y^{\nu,f}$,  $\tau\equiv \tau^{\nu,f}$, etc. Denote further
$B_t\equiv B(t,Y_t, \nu_t, f(t))$. 
For any $0\le t<t+\Delta t\le T$, let us estimate respectively $\Wc(\Lc(Y_{\tau\wedge t}),\Lc(Y_{\tau\wedge (t+\Delta t)}))$ and $\alpha(t)-\alpha(t+\Delta t)$. By definition, one has
\be*
\Wc(\Lc(Y_{\tau\wedge t}),\Lc(Y_{\tau\wedge (t+\Delta t)})) \le \E[|Y_{\tau\wedge (t+\Delta t)} -Y_{\tau\wedge t}|] =  \E\left[\left|\int_{\tau\wedge t}^{\tau\wedge (t+\Delta t)} B_s\d s+\sqrt{2}\d W_s\right|\right] \le C\sqrt{\Delta t}.
\ee*
Hereafter, $C>0$ is used to denote generic  constants that may change from line to line, but depends only on the designated variables. 
Applying Girsanov's theorem with 
\be*
\frac{\d\Q}{\d\P}\Big|_t :=  \exp\left(-\int_0^t \lambda_s\d W_s-\frac{1}{2}\int_0^t\lambda_s^2\d s\right),\quad \mbox{where } \lambda_s := \frac{B_s}{\sqrt{2}},
\ee*
$(W^{\Q}_t := W_t+\int_0^t \lambda_s\d s)_{0\le t\le T}$ is a Brownian motion under $\Q$ and one has 
\be*
\alpha(t)-\alpha(t+\Delta t) &=& \P[\tau> t]- \P[\tau> t+\Delta t] \\
&=& \E^{\Q}\left[\left(\frac{\d\P}{\d\Q}\right)_{t+\Delta t} \left(\iota\left(\inf_{0\le r\le t}(Z+\int_0^r\sqrt{2} \d W^{\Q}_s)\right) -\iota\left(\inf_{0\le r\le t+\Delta t}(Z+\int_0^r\sqrt{2} \d W^{\Q}_s)\right)  \right)\right] \\
&\le&\exp\left(\frac{3}{2}\int_0^{T}\lambda_s^2ds\right)\left(\Q\left[\inf_{0\le r\le t}(Z+\int_0^r\sqrt{2} \d W^{\Q}_s)>0\right] -  \Q\left[\inf_{0\le r\le t+\Delta t}(Z+\int_0^r\sqrt{2} \d W^{\Q}_s)>0\right]\right)^{1/2}\\
&\le& C\left(\int_{\R_+}\left(\Phi\left(x/\sqrt{2t}\right)-\Phi\left(x/\sqrt{2(t+\Delta t)}\right)\right)\rho(\d x)\right)^{1/2} ~~\le~~ C\Delta t^{1/6},
\ee*
where the first inequality follows from H\"older inequality. 
Finally, we see the fact that
\be*
\E[|Y_{\tau\wedge t}|^2] 
\le  3\left(\E[|Z|^2]+ \E\left[\left|\int_{0}^{T} |B_s|\d s\right|^2\right] + \E\left[\int_{0}^{T}2\d s\right]\right) \le C,
\ee*
which allows to conclude the proof. \qed
\end{proof}
\begin{Lemma}\label{lem:continuity}
$\Gamma: \Pc_T\times \Omega_T\to\Pc_T\times \Omega_T$ is $d_T-$continuous.
\end{Lemma}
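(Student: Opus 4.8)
The plan is to establish $d_T$-continuity of $\Gamma$ by a stability estimate for the frozen SDE \eqref{sde-freeze} combined with continuity of the absorption map. Suppose $(\mu^n,f^n)\to(\mu,f)$ in $(\Pc_T\times\Omega_T,d_T)$. I would denote by $Y^n\equiv Y^{\mu^n,f^n}$ and $Y\equiv Y^{\mu,f}$ the corresponding solutions of \eqref{sde-freeze} driven by the \emph{same} Brownian motion $W$ and the same initial datum $Z$, and set $\tau^n:=\inf\{t:Y^n_t\le 0\}$, $\tau:=\inf\{t:Y_t\le 0\}$. The first step is a pathwise $L^1$ (or $L^2$) estimate
\be*
\E\Big[\sup_{0\le t\le T}|Y^n_t-Y_t|\Big]\le \E\int_0^T\big|B(s,Y^n_s,\mu^n_s,f^n(s))-B(s,Y_s,\mu_s,f(s))\big|\,\d s,
\ee*
which by the triangle inequality splits into a term controlled by the Lipschitz/Hölder regularity of $b$ in the space variable (Assumption~\ref{ass:1}-(iii)) applied to $|Y^n_s-Y_s|$, a term $\int_{\R_+}b(s,Y_s,y)(\mu^n_s-\mu_s)(\d y)$ bounded using boundedness and continuity of $b(s,x,\cdot)$ together with $\Wc(\mu^n_s,\mu_s)\to 0$, and a term $|b(s,Y_s,0)|\,|f^n(s)-f(s)|$ bounded by $\|b\|_\infty\|f^n-f\|_T$. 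Gr\"onwall's inequality then yields $\E[\sup_{t\le T}|Y^n_t-Y_t|]\to 0$, hence also convergence in probability uniformly on $[0,T]$.

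The second step converts this into convergence of the two components of $\Gamma$. For the law component, $\Wc(\Lc(Y^n_{t\wedge\tau^n}),\Lc(Y_{t\wedge\tau}))\le \E[|Y^n_{t\wedge\tau^n}-Y_{t\wedge\tau}|]$; writing $|Y^n_{t\wedge\tau^n}-Y_{t\wedge\tau}|\le |Y^n_{t\wedge\tau^n}-Y_{t\wedge\tau^n}| + |Y_{t\wedge\tau^n}-Y_{t\wedge\tau}|$, the first piece is dominated by $\sup_{t\le T}|Y^n_t-Y_t|\to0$ in $L^1$ and the second by the modulus of continuity of $Y$ evaluated at $|\tau^n\wedge t-\tau\wedge t|$, so it suffices to show $\tau^n\to\tau$ in probability. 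For the second component, $\alpha^{\mu^n,f^n}(t)-\alpha^{\mu,f}(t)=\P[\tau^n>t]-\P[\tau>t]$, which again follows from $\tau^n\to\tau$ in probability provided $\tau$ has no atom at $t$; since $Y$ is a non-degenerate Itô diffusion (diffusion coefficient $\sqrt2$) started from $Z\sim\rho$ with a density, $\P[\tau=t]=0$ for every $t>0$, and one also checks $\P[\tau=0]=0$ because $\rho$ gives no mass to $\{0\}$ (here is where $\rho$ admitting a density enters), so the convergence is in fact uniform in $t$ by a Dini-type argument (monotone limits of continuous functions). This gives $\Wc_T(\Lc(Y^n_{\cdot\wedge\tau^n}),\Lc(Y_{\cdot\wedge\tau}))\to0$ and $\|\alpha^{\mu^n,f^n}-\alpha^{\mu,f}\|_T\to0$, i.e. $d_T$-continuity.

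The main obstacle is the convergence $\tau^n\to\tau$ in probability, since the hitting time is not a continuous functional of the path in general (a path can touch $0$ tangentially). The remedy is standard for non-degenerate one-dimensional diffusions: on the event $\{Y\text{ attains a strict local minimum }\le 0\text{ near }\tau\}$, which has full probability because a Brownian-driven diffusion a.s. crosses strictly below any level it reaches, uniform convergence $Y^n\to Y$ forces $\tau^n\to\tau$; more precisely, for every $\eps>0$ the path $Y$ satisfies $\inf_{[\tau,\tau+\eps]}Y<0$ a.s., so once $\|Y^n-Y\|_T$ is small the process $Y^n$ also goes strictly negative before $\tau+\eps$, giving $\limsup_n\tau^n\le\tau+\eps$, while lower semicontinuity of the hitting time under uniform convergence gives $\liminf_n\tau^n\ge\tau$. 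I would isolate this as the one genuinely probabilistic point and handle it with the strong Markov property and the law of the iterated logarithm (or simply the fact that $W_{\tau+\cdot}-W_\tau$ immediately takes both signs), all the rest being Gr\"onwall and dominated/monotone convergence.
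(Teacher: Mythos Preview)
Your proposal is correct and follows a route that is close in spirit to the paper's but organizes the argument differently. Both proofs begin with the same Gr\"onwall estimate yielding $\E[\sup_{t\le T}|Y^n_t-Y_t|^2]\to 0$. The divergence is in how one passes from this to convergence of the stopped laws and survival functions.

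You prove the stronger statement $\tau^n\to\tau$ in probability, invoking the strict crossing property of a drifted Brownian motion at level~$0$ (via strong Markov and the fact that $W$ immediately takes both signs) for the upper bound $\limsup\tau^n\le\tau$, and lower semicontinuity of the hitting time for the lower bound. From this, both components of $\Gamma$ follow by continuous mapping: the Wasserstein piece via the modulus of continuity of $Y$ applied to $|t\wedge\tau^n-t\wedge\tau|\le|\tau^n\wedge T-\tau\wedge T|$, and the survival function via $\P[\tau=t]=0$. The paper instead avoids proving $\tau^n\to\tau$ altogether: it shows directly that $\P[\tau^n>t]\to\P[\tau>t]$ and $\P[\tau^n>t,\tau\le t]\to 0$ by writing $\{\tau>t\}=\bigcup_k\{Y_s\ge 1/k,\ \forall s\le t\}$ and using uniform convergence in probability, the only probabilistic input being $\P[\inf_{s\le t}Y_s=0]=0$. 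The paper's Wasserstein estimate then proceeds by a case-by-case decomposition over $\{\tau^n>t,\tau>t\}$, $\{\tau^n>t,\tau\le t\}$, $\{\tau^n\le t,\tau>t\}$.

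For the upgrade from pointwise to uniform convergence, the paper invokes the equicontinuity furnished by Lemma~\ref{lem:image} (which is needed anyway for the Schauder argument), whereas you use a P\'olya-type argument for $\alpha$. One terminological nit: what you call ``Dini-type (monotone limits of continuous functions)'' is not Dini's theorem, since the sequence $(\alpha^n)$ is not monotone in $n$; the correct statement is that pointwise convergence of \emph{monotone} functions to a \emph{continuous} limit is automatically uniform on compacts. Your Wasserstein uniformity also goes through, since your decomposition bounds $\sup_{t\le T}|Y^n_{t\wedge\tau^n}-Y_{t\wedge\tau}|$ by $\sup_t|Y^n_t-Y_t|+\omega_Y(|\tau^n\wedge T-\tau\wedge T|)$, though you should say this explicitly. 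Overall your route is slightly more conceptual (one probabilistic lemma, then continuous mapping), while the paper's is more hands-on and reuses its compactness lemma.
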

\begin{proof}
Let $(\nu^n,f^n)\to(\nu,f)$. Again,  we denote for simplicity 
\be*
Y^n\equiv Y^{\nu^n,f^n} && Y\equiv Y^{\nu,f} \\
\tau_n\equiv \tau^{\nu^n,f^n} && \tau\equiv\tau^{\nu,f} \\
B^n_t\equiv B(t,Y^n_t, \nu^n_t,f^n(t)) && B_t\equiv B(t,Y_t, \nu_t,f(t)),~ \mbox{etc.}
\ee*
We estimate first $\E[\sup_{0\le t\le T}|Y^{n}_t-Y_t|^2]$ following the classic arguments. For each $t\in [0,T]$, one has
\be*
Y^n_t-Y_t = \int_0^t\left(B\big(u,Y^n_u, \nu^n_u,f^n(u)\big)-B\big(u,Y_u,\nu_u,f(u)\big) \right) \d u.
\ee*
By the Lipschitz continuity of $B$ and the inequality
$\max\big(\Wc(\nu^n_u,\nu_u), |f^n(u)-f(u)|\big) \le d_T((\nu^n, f^n), (\nu,f)\big)$,  there exists $C>0$ large enough such that
\be*
\E[\sup_{0\le s\le t}|Y^{n}_s-Y_s|^2] \le C\left(d_T((\nu^n, f^n), (\nu,f)\big) + \int_0^t  \E[\sup_{0\le s\le u}|Y^{n}_s-Y_s|^2] \d u\right),\quad \forall t\in [0,T]. 
\ee*
This, combined with Gronwall's inequality, yields 
$\lim_{n\to\infty}\E[\sup_{0\le t\le T}|Y^{n}_t-Y_t|^2] =0$. In particular, $\sup_{0\le t\le T}|Y^{n}_t-Y_t|$ converges to zero in probability. Let $\varepsilon > 0$ be arbitrary. The continuity of $Y$ implies
 \be*
\{\tau > t\} = \bigcup_{k \in \mathbb N} A_k := \bigcup_{k \in \mathbb N} \{Y_s \geq 1/k, \ \forall 0\le s\le t\}.
\ee*
Hence, we have that for some $k_0 \in \N$ that $\P[A_{k_0}] > \P[\tau > t] -\eps/2$. Since $Y^{n} \to Y$ uniformly in probability, there exists $N$ such that
\be*
\P[\sup_{s \in [0, t]} | Y_s^{n} - Y_s|
 \geq 1/k_0] \le  \frac{\varepsilon}{2}
 \ee*
holds for all $n > N$.  We obtain thus
\be*
\P[\tau_n > t] \ge  \P[A_{k_0} \cup \{\sup_{s \in [0, t]} | Y_s^{n} - Y_s| >   1/k_0  \}^c] \ge \P[\tau > t] - \varepsilon.
\ee*
The reverse inequality can be argued similarly, where we note that $\P[\tau \le t]=\P[\inf_{0\le s\le t}Y_s\le 0]=\P[\inf_{0\le s\le t}Y_s<0]$ as $Y$ is a drifted Brownian motion,  and  we can rewrite  
\be*
\{\inf_{0\le s\le t}Y_s<0\}=\bigcup_{k \in \mathbb N} \{\inf_{s \in [0, t]} Y_s \leq -1/k\}.
\ee*
Thus with a similar calculation, we obtain $\lim_{n\to\infty}\P[\tau_n\le  t] \ge  \P[\tau \le t]-\eps$ and thus $\lim_{n\to\infty}\alpha^n(t)=\lim_{n\to\infty}\P[\tau_n> t] =  \P[\tau> t]=\alpha(t)$. 
 Next, we compute $\Wc(\Lc(Y^n_{\tau_n\wedge t}),\Lc(Y_{\tau\wedge t}))$. By definition, one has $\Wc(\Lc(Y^n_{\tau_n\wedge t}),\Lc(Y_{\tau\wedge t}))\le  \E[|Y_{\tau_n \wedge t}^n - Y_{\tau \wedge t}|]$. Further, 
\be* 
\E[|Y_{\tau_n \wedge t}^n - Y_{\tau \wedge t}|] &=&   \E[|Y_{t}^n - Y_{t}|{\1}_{\{\tau_n>t, \tau>t\}}] + \E[|Y_{t}^n |{\1}_{\{\tau_n>t, \tau\le t\}}] +  \E[|Y_{t}|{\1}_{\{\tau_n\le t, \tau>t\}}] \\
&\le &\E[|Y_{t}^n - Y_{t}|] + \E[|Y_{t}^n|^2]^{1/2}\P[\tau_n>t, \tau\le t]^{1/2} +   \E[|Y_{t}|^2]^{1/2}\P[\tau_n\le t, \tau> t]^{1/2}. 
\ee*
Repeating the above reasoning, one may show $\lim_{n\to\infty}\P[\tau_n> t, \tau\le  t]=0=\lim_{n\to\infty}\P[\tau_n\le t, \tau> t]$, which yields
 $\lim_{n\to\infty} \Wc(\Lc(Y^n_{\tau_n\wedge t}),\Lc(Y_{\tau\wedge t}))=0$.  Having shown
 \be*
\lim_{n\to\infty} \Wc(\Lc(Y^n_{\tau_n\wedge t}),\Lc(Y_{\tau\wedge t}))=0 &\mbox{and}& \lim_{n\to\infty} \alpha^n(t)=\alpha(t), \quad \forall t\in [0,T], 
 \ee*
 the uniform continuity in Lemma  \ref{lem:image} finally shows
$\lim_{n\to\infty} d_T(\Gamma(\nu^n,f^n),\Gamma(\nu,f))=0$.\qed
\end{proof}

\subsection{Proof of the uniqueness }\label{ssec:uniqueness}

This section is reserved for showing the uniqueness of solution to \eqref{eq:lim} on $[0,T]$. We fix an arbitrary solution $X$ to \eqref{eq:lim}. We first show the one-to-one relation between $X$ and the  parametric nonlinear Fokker-Planck equation below. 
\begin{Proposition}
The uniqueness of $X$ is equivalent to the uniqueness of $u:\R_+^2\to\R$ satisfying
\label{prop:equiv}
\beq\label{eq:FP}
\begin{split}
 & \partial_t u(t,x) =\partial^2_{xx}\big(u(t,x)\big)  - \partial_x\big( B^{\nu,\alpha}(t,x)u(t,x)\big) ,\quad \forall t, x>0 \\
 & u(0,x)=\rho(x),\quad u(t,0)=0 ,\quad \forall t, x>0 \\
 & \beta(t)=\int_0^\infty u(t,y)\d y,\quad \forall t>0 \\
 & \nu_t(\d x) = \big(1-\beta(t)\big )\delta_0(\d x) + u(t,x)\d x,\quad \forall t>0 \\
 & B^{\nu,\beta}(t,x) =B(t,x,\nu_t,\beta(t)),\quad \forall t, x>0.
\end{split}
\eeq 
\end{Proposition}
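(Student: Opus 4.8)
The plan is to realise the asserted one-to-one correspondence by exhibiting explicit maps in both directions between solutions of \eqref{eq:lim} and solutions of \eqref{eq:FP}, checking that they are mutually inverse once we identify $X$ with its law, and then transferring uniqueness along the correspondence.

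\emph{From $X$ to $u$.} Fix a solution $X$ to \eqref{eq:lim}. Since $\rho$ has a density, $Z>0$ a.s., and on $[0,\tau)$ the process obeys $\d X_t = B(t,X_t,\mu_t,\mu_t(\R_+^*))\,\d t+\sqrt2\,\d W_t$ with bounded drift, where $\mu_t:=\Lc(X_t)$ and $\tau:=\inf\{t:X_t\le 0\}$. First I would record the bookkeeping identities $\mu_t(\R_+^*)=\P[\tau>t]=:\beta(t)$ and, from the very definition of $B$, the identity $\iota(x)B(t,x,\mu_t,\beta(t))=\iota(x)\int_{(0,\infty)}b(t,x,y)\mu_t(\d y)$, so the drift in \eqref{eq:lim} is genuinely of the claimed parametric form. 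Next, inserting test functions $\varphi\in C^2_c(\R)$ supported in $(0,\infty)$ into the martingale problem of Definition~\ref{def:MP} shows that the sub-probability measure $q_t(\d y):=\P[X_t\in\d y,\ \tau>t]$ is a distributional solution on $(0,\infty)$ of $\partial_t q=\partial^2_{xx}q-\partial_x(B^{\mu,\beta}q)$ with $q_0=\rho$ and with no mass retained at the boundary. Because $b$ and $\partial_x b$ are bounded and H\"older (Assumption~\ref{ass:1}-(iii)) and $t\mapsto\mu_t$ is weakly continuous, the coefficient $(t,x)\mapsto B^{\mu,\beta}(t,x)$ is bounded and H\"older; parabolic Schauder theory for the Cauchy--Dirichlet problem on the half-line then upgrades $q_t$ to a classical density $u(t,\cdot)$ solving \eqref{eq:FP}, the condition $u(t,0)=0$ being a consequence of absorption. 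With $\beta(t)=\int_0^\infty u(t,y)\,\d y$ and $\nu_t=(1-\beta(t))\delta_0+u(t,\cdot)\,\d x$ one checks $\nu_t=\mu_t$, so $u$ solves \eqref{eq:FP}.

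\emph{From $u$ to $X$.} Conversely, let $u$ solve \eqref{eq:FP}. A maximum/comparison principle gives $u\ge 0$, and integrating the equation against $1$ and using $u(t,0)=0$ gives $\beta(t):=\int_0^\infty u(t,y)\,\d y\in[0,1]$, so $\nu_t:=(1-\beta(t))\delta_0+u(t,\cdot)\,\d x$ is a probability measure and the now \emph{frozen} coefficient $B^{\nu,\beta}(t,\cdot)$ is Lipschitz in $x$. Solve the linear SDE $Y_t=Z+\int_0^t B^{\nu,\beta}(s,Y_s)\,\d s+\sqrt2\,W_t$ (unique strong solution), set $\tau:=\inf\{t:Y_t\le 0\}$ and $X:=Y_{\cdot\wedge\tau}$, exactly as for \eqref{sde-freeze}. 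The law of this killed diffusion restricted to $(0,\infty)$ has a density $\tilde u(t,\cdot)$ solving the \emph{linear} Cauchy--Dirichlet problem $\partial_t\tilde u=\partial^2_{xx}\tilde u-\partial_x(B^{\nu,\beta}\tilde u)$, $\tilde u(0,\cdot)=\rho$, $\tilde u(t,0)=0$; by uniqueness for this linear problem in the class of bounded (equivalently integrable) solutions, $\tilde u=u$, hence $\Lc(X_t)=\nu_t$ for every $t$. Substituting $\Lc(X_t)=\nu_t$ back into the equation for $X$ shows that $X$ solves \eqref{eq:lim}.

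\emph{Conclusion and main obstacle.} The two constructions are mutually inverse after identifying a solution $X$ with the flow $\mu_\cdot=\Lc(X_\cdot)$: from $X$ we read off $u$, and feeding that $u$ into the second construction returns a process with one-dimensional marginals $\nu_t=\mu_t$, hence — the frozen SDE having a unique strong solution — with the same law as $X$. Consequently, if \eqref{eq:FP} has a unique solution then any two solutions of \eqref{eq:lim} share the same $(\nu,\beta)$ and therefore coincide as solutions of the resulting Lipschitz SDE; conversely, if \eqref{eq:lim} has a unique (in law) solution then two solutions of \eqref{eq:FP} produce solutions of \eqref{eq:lim} with identical marginals and hence are equal. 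This is the claimed equivalence, and strong uniqueness for \eqref{eq:lim} then follows automatically since, once the flow is pinned down, \eqref{eq:lim} is a standard SDE with Lipschitz coefficients. The delicate points, to be carried out in the technical lemmas, are the regularity step in the first direction — promoting the measure-valued $q_t$ to a classical solution of \eqref{eq:FP} up to the boundary, with the Dirichlet condition at $0$ — and, dually, the uniqueness of the linear killed Fokker--Planck equation in a class containing both $u$ and $\tilde u$; both rest on parabolic theory on the half-line and on verifying that $B^{\nu,\beta}$ inherits the H\"older regularity of $b$ when the flow $\nu$ is only known to be weakly time-continuous.
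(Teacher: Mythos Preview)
Your proposal is correct and follows essentially the same route as the paper: both directions of the bijection are set up exactly as in the paper's proof, which packages the ``from $X$ to $u$'' step into the auxiliary Proposition~\ref{prop:propertiesMarginals} and dispatches the ``from $u$ to $X$'' step as a one-line verification via the frozen SDE. The only differences are in the technical tools invoked: where you appeal generically to parabolic Schauder theory on the half-line and to uniqueness for the linear Cauchy--Dirichlet problem, the paper cites Figalli's uniqueness result for measure-valued solutions of the degenerate Fokker--Planck equation on the full line (\cite{Figalli2008}, Lemma~2.3) together with the explicit Green-function construction of Garroni--Menaldi (\cite{GM1992}, Chapter~VI) for existence and regularity of the classical solution; your ``main obstacle'' paragraph correctly isolates precisely the two points these references are used to settle. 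You are also more explicit than the paper in the backward direction, spelling out that $\Lc(X_t)=\nu_t$ must be checked via linear uniqueness before one can conclude that the stopped process solves \eqref{eq:lim} --- the paper hides this under ``a straightforward verification''.
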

Proposition \ref{prop:equiv} is the basis to adopt the fixed-point argument, and we need the following auxiliary result to prove it. Denote $(\mu_t)_{t\geq 0}:=(\Lc(X_t))_{t\geq 0}$, $\tau:=\inf \{t\ge 0: X_t\le 0\}$  and  $\alpha(t):=\P[\tau> t]$. For $(\nu,f)\in\Pc_T\times \Omega_T$, define the function $B^{\nu,f}:\R_+\times\R\to\R$ by $B^{\nu,f}(t,x):=B(t,x,\nu_t,f(t))$. Then we have  the following proposition.
\begin{Proposition}\label{prop:propertiesMarginals}
 Under the hypothesis of Theorem~\ref{thm:lim} and writting $B\equiv B^{\mu,\alpha}$, we have the following results. 
    \begin{enumerate}[label=\upshape(\roman*),ref= (\roman*)]
    \item \label{lem:decom}  For each $t>0$, there exists a sub-probability density $p(t,\cdot)$ supported on $(0,\infty)$ such that
\be*
\mu_t(\d x) = \big(1-\alpha(t)\big )\delta_0(\d x) + p(t,x)\d x &\mbox{and}& \alpha(t) = \int_0^{\infty}p(t,x)\d x. 
\ee*

    \item\label{th:second} The family $(\mu_t)_{t\geq0}$ is the unique weak solution to the following  Fokker-Planck equation, i.e. 
    \beq \label{eq:weakFP}
\frac{\d}{\d t}\int_{\R}f(x)\mu_t(\d x)  = \int_{(0,\infty)}\left[ B(t,x) f'(x) + f''(x)\right] \mu_t(\d x),\quad \forall t\in (0,T], 
\eeq
 holds for all smooth functions $f:\R\to\R$ of compact support. 

    \item\label{th:third} The function $p$ is a solution to \eqref{eq:FP}, with $\alpha$, $\mu$ and $B^{\mu,\alpha}$  defined above.
  \end{enumerate}
\end{Proposition}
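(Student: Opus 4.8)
The three assertions are proved in turn, each feeding the next, and the only genuinely delicate point is the uniqueness in~\ref{th:second}. For \ref{lem:decom}: since $\rho$ has a density, $Z>0$ a.s.; up to $\tau$ the process $X$ is a Brownian motion at speed $\sqrt2$ with bounded drift $B(t,X_t)=\int_{(0,\infty)}b(t,X_t,y)\mu_t(\d y)$, and for $t\ge\tau$ it is frozen at $X_\tau=0$. I would remove the drift by Girsanov: with $\lambda_s:=B(s,X_s)/\sqrt2$ and $\d\Q/\d\P|_t:=\exp(-\int_0^t\lambda_s\,\d W_s-\tfrac12\int_0^t\lambda_s^2\,\d s)$, under $\Q$ one has $X=(Z+\sqrt2\,W^{\Q})_{\cdot\wedge\tau}$ for a $\Q$-Brownian motion $W^{\Q}$, so by the reflection principle $\Q[X_t\in\cdot\,,\tau>t]$ has on $(0,\infty)$ the explicit killed-Gaussian density
\be*
q(t,x)=\int_0^\infty\rho(z)\,\frac{1}{\sqrt{4\pi t}}\Big(e^{-(x-z)^2/(4t)}-e^{-(x+z)^2/(4t)}\Big)\,\d z .
\ee*
As $\lambda$ is bounded on $[0,T]$, $\P$ and $\Q$ are equivalent on $\Fc_t$, hence $\P[X_t\in\cdot\,,\tau>t]$ is absolutely continuous on $(0,\infty)$; call its density $p(t,\cdot)$. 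Since $X_t=0$ exactly on $\{\tau\le t\}$, this yields $\mu_t(\{0\})=\P[\tau\le t]=1-\alpha(t)$ and $\int_0^\infty p(t,x)\,\d x=\P[X_t>0]=\alpha(t)$.

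For the existence part of~\ref{th:second}: applying It\^o's formula to $f(X_t)$ for $f\in C_c^\infty(\R)$, using $\iota^2=\iota$ and $\d\langle X\rangle_t=2\iota(X_t)\,\d t$, and taking expectations (the stochastic integral is a true martingale because $f'$ is bounded) gives the integral identity $\int_\R f\,\d\mu_t=\int_\R f\,\d\rho+\int_0^t\int_{(0,\infty)}[B(s,x)f'(x)+f''(x)]\,\mu_s(\d x)\,\d s$ for every $t\in[0,T]$. Repeating verbatim the computations in the proof of Lemma~\ref{lem:image} — which only use the boundedness of the drift and a Gaussian estimate at the barrier, hence apply to the present $X$ as well — shows $\Wc(\mu_t,\mu_s)\le C|t-s|^{1/2}$ and that $\alpha$ is $\tfrac16$-H\"older; together with the continuity of $b$ this makes $s\mapsto\int_{(0,\infty)}[B(s,\cdot)f'+f'']\,\mu_s$ continuous, so the above identity may be differentiated in $t$ to give~\eqref{eq:weakFP}.

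The uniqueness part of~\ref{th:second} is the crux. The coefficient $B=B^{\mu,\alpha}$ is now \emph{fixed} (determined by the chosen $X$), bounded by $\|b\|_\infty$, and — by Assumption~\ref{ass:1}-(iii) and the a priori H\"older regularity of $t\mapsto(\mu_t,\alpha(t))$ above — H\"older continuous in $(t,x)$ up to the boundary $x=0$. On $(0,\infty)$ the equation is uniformly parabolic with constant second-order part, so I would argue uniqueness by a Holmgren duality argument: for $g\in C_c^\infty((0,\infty))$ and $t\in(0,T]$, solve the backward problem $\partial_s v+\partial^2_{xx}v+B\,\partial_x v=0$ on $(0,t)\times(0,\infty)$ with $v(t,\cdot)=g$, $v(s,0)=0$, which admits a bounded classical solution with bounded spatial derivatives by parabolic Schauder theory; after extending~\eqref{eq:weakFP} by a cutoff/moment approximation to the time-dependent test function $v(s,\cdot)$ (note that $v(s,0)=0$ annihilates the atom at $0$), the map $s\mapsto\int v(s,\cdot)\,\d\nu_s$ is constant, so any two solutions $\nu,\nu'$ issued from $\rho$ satisfy $\int g\,\d\nu_t=\int g\,\d\nu'_t$; varying $g$ gives $\nu_t=\nu'_t$ on $(0,\infty)$, and the atoms at $0$ coincide since the total masses do. Equivalently, one may invoke the superposition principle to lift any solution of~\eqref{eq:weakFP} to a solution of the stopped martingale problem for $(B,\sqrt2)$ on $(0,\infty)$ and use its classical well-posedness.

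For~\ref{th:third}: by~\ref{lem:decom}, taking $u=p$ one has $\beta(t)=\int_0^\infty p(t,y)\,\d y=\alpha(t)$, $\nu_t=(1-\beta(t))\delta_0+p(t,x)\,\d x=\mu_t$ and hence $B^{\nu,\beta}=B^{\mu,\alpha}$; moreover $u(0,\cdot)=\rho$ because $\mu_0=\rho$ has no atom at $0$ and $\alpha(0)=1$. Testing~\eqref{eq:weakFP} against $f\in C_c^\infty((0,\infty))$ and using $\mu_s|_{(0,\infty)}=p(s,\cdot)\,\d x$ shows that $p$ solves $\partial_t p=\partial^2_{xx}p-\partial_x(B^{\mu,\alpha}p)$ in the sense of distributions on $(0,\infty)\times(0,T)$. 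Since the coefficients of this equation are bounded and H\"older continuous up to the boundary (by Assumption~\ref{ass:1}-(iii) together with the regularity of $(\mu_t,\alpha(t))$), interior and boundary Schauder estimates promote $p$ to a classical solution on $(0,\infty)\times(0,T]$; finally, comparing via the uniqueness of~\ref{th:second} the flow $p(t,\cdot)\,\d x+(1-\alpha(t))\delta_0$ with the one generated by the Dirichlet fundamental solution $\Gamma_B$ furnished by classical parabolic theory identifies $p(t,\cdot)=\int_0^\infty\rho(z)\,\Gamma_B(t,z;\cdot)\,\d z$, which vanishes at $x=0$, yielding the boundary condition and completing~\eqref{eq:FP}. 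I expect the duality (or superposition) step and this boundary identification to need the most care; everything else is a routine application of It\^o's formula, Girsanov's theorem and Schauder estimates.
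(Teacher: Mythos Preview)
Your argument is correct in outline and reaches the same conclusions, but it is organised differently from the paper's proof.

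For \ref{lem:decom} the paper does not use Girsanov. It introduces the (unstopped) auxiliary process $Y_t=Z+\int_0^tB(s,Y_s)\,\d s+\sqrt2\,W_t$, observes $X_t=Y_t$ on $\{\tau>t\}$, and deduces $\P[X_t\in A]\le\P[Y_t\in A]$ for Borel $A\subset(0,\infty)$; absolute continuity then follows from the fact that $Y_t$ has a density. Your Girsanov route to the same absolute continuity is equally valid and has the small bonus of producing the explicit killed-heat density under $\Q$, which is not needed here.

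For the uniqueness in \ref{th:second} the paper does not run a Holmgren duality argument. It first notes that, once $B=B^{\mu,\alpha}$ is frozen, the stopped SDE \eqref{eq:linearSDE} is strongly well posed (because the unstopped version with drift $B$ is, $B$ being Lipschitz in $x$), and then invokes Figalli's superposition principle (Lemma~2.3 of \cite{Figalli2008}) to transfer this into uniqueness for the weak Fokker--Planck equation \eqref{eq:linearFPmeasure}. This is exactly the ``superposition'' alternative you mention in passing; the duality sketch you give would also work, but requires the extra care you flag (time-dependent, non-compactly-supported test functions and their extension across $x=0$), whereas the citation to \cite{Figalli2008} avoids that bookkeeping.

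For \ref{th:third} the logic is reversed relative to yours. The paper first invokes Garroni--Menaldi \cite{GM1992} to produce a classical solution $u$ of the linear Dirichlet problem on the half line with the growth bound \eqref{eq:regularity}, then checks by integration by parts that the measure $\nu_t=(1-\int u)\delta_0+u(t,x)\,\d x$ is a weak solution of \eqref{eq:linearFPmeasure}, and finally uses the uniqueness from \ref{th:second} to conclude $u=p$. Your route (show $p$ is a distributional solution, upgrade by Schauder, then identify the boundary value via comparison with the Dirichlet fundamental solution) reaches the same identity but carries the additional ``upgrade-to-classical'' step; the paper's order sidesteps it by constructing the classical object first.
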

\begin{proof}
By definition, one may rewrite 
\be*
\mu_t(\d x) = \big(1-\alpha(t)\big )\delta_0(\d x) + \iota(x)\mathbb P[X_t\in \d x]. 
\ee*
\rmi Define the SDE for $Y$ by
\be*
Y_t=Z + \int_0^t B(s,Y_s)\d s + \sqrt{2}W_t, \quad \forall t\ge 0.
\ee*
Note that $X_t=Y_t$ on the event $\{\tau>t\}$. So, for any Borel set $A\subseteq(0,\infty)$, we have 
\be*
\P[X_t\in A]=\P[Y_t\in A,\tau>t] \le \P[Y_t \in A],
\ee*
which implies that the distribution of $X_t$ restricted on $(0,\infty)$ is absolutely continuous with respect to  the distribution of $Y_t$. The latter admits a probability density as $B$ is bounded. 

\noindent \rmii One has by definition 
\be* 
B(t,x)=\int_{\R_+}b(t,x,y)\mu_t(\d y) - b(t,x,0)\big(1-\alpha(t)\big)=\E[b(t,x,X_t)] - b(t,x,0)\big(1-\alpha(t)\big),
\ee* 
which implies, by Lemma \ref{lem:image},  that $t\mapsto B^{\mu,\alpha}(t,\cdot)$ is H\"older continuous and $\partial_x B^{\mu,\alpha}, \partial^2_{xx} B^{\mu,\alpha}$ are bounded and Lipschitz in $x$. For the SDE 
\begin{equation}    
\label{eq:linearSDE}
Z_t = Z +\int_0^t \iota(Z_s)B(s,Z_s)\d s +\int_0^t \iota(Z_s)\sqrt{2}\d W_s,\quad \forall t\in [0,T] 
\end{equation}
$X$ is its solution by definition, where we use the fact 
that \eqref{eq:linearSDE} has a unique  solution if and only if the same holds for the corresponding SDE without absorption
\be*
\d Z_t = B(t,Z_t)\d t +\sqrt{2}\d W_t,\quad \forall t\in [0,T]. 
\ee*
This is ensured as $B^{\mu,\alpha}$ is Lipschitz in $x$. Therefore, $X$ is the unique  solution of \eqref{eq:linearSDE} and the family $(\mu_t)_{t\geq 0}$, by Lemma 2.3 of \cite{Figalli2008}, is the unique weak solution to the Fokker-Planck equation
\beq 
\label{eq:linearFPmeasure}
\frac{\d}{\d t}\int_{\R}f(x)\nu_t(\d x)  = \int_{(0,\infty)}\left[ B(t,x) f'(x) + f''(x)\right] \nu_t(\d x),\quad \forall t\in (0,T], f\in C_c^\infty(\R). 
\eeq
\rmiii To conclude, we apply Lemma~1.10 and Theorem 2.2 of Chapter VI in  \cite{GM1992}. Namely,  the following  (linear) Fokker-Planck equation on the half space
\beq 
\begin{split}
 & \partial_t u(t,x) =\partial^2_{xx}\big(u(t,x)\big)  - \partial_x\big( B(t,x)u(t,x)\big) ,\quad \forall t, x>0 \\
 & u(0,x)=\rho(x),\quad u(t,0)=0 ,\quad \forall t, x>0
\end{split}
\eeq 
has a unique classical solution $u$ satisfying for some $C>0$
\beq\label{eq:regularity}
|\partial_t u(t,x)|+|\partial_x u(t,x)|+|\partial^2_{xx} u(t,x)| \le \frac{C}{t}\int_0^\infty  \exp\left(-\frac{(x-y)^2}{Ct}\right) \rho(y)\d y,\quad \forall t,x>0.
\eeq
By integration by parts, one deduces  that 
\beq \label{eq:mu} 
\nu_t(dx) =  \left(1-\int_0^\infty u(t,x)\d x\right)\delta_0(\d x) + u(t,x)\d x ,\quad \forall t\in [0,T]
\eeq 
is also a weak solution to  \eqref{eq:linearFPmeasure}, and further by uniqueness $\nu=\mu$ and $u=p$. Hence the equation \eqref{eq:FP} holds for $p$ by noting $B\equiv B^{\mu,\alpha}$. \qed
\end{proof}
We are now ready to prove Proposition \ref{prop:equiv}. 

\begin{proof}[of Proposition \ref{prop:equiv}]
It is clear that for the given solution $X$, $p$ defined in the proof of Proposition \ref{prop:propertiesMarginals}  is a solution to \eqref{eq:FP}. On the other hand, for any solution $u$ to \eqref{eq:FP}, define $Z$ to be the solution of 
\be*
\d Z_t = B(t,Z_t,\nu_t,\beta(t))\d t +\sqrt{2}\d W_t,\quad \forall t\in [0,T]. 
\ee*
Then a straightforward verification yields $(Z_t\iota(Z_t))_{t\ge 0}$ is a solution to  \eqref{eq:lim}. \qed
\end{proof}

\vspace{1mm} 

It remains to prove the uniqueness of solutions to \eqref{eq:FP}. Introduce the SDE 
\be* 
\d Y_t = B^{\mu,\alpha}(t,Y_t)\d t + \sqrt{2}\d W_t,\quad \forall t\ge 0, 
\ee*  
and denote by $(Y^{t,x}_s)_{s\ge t}$ the solution satisfying $Y^{t,x}_t=x$. Let $g(t,x,s,\cdot)$ be the density function of $Y^{t,x}_s$ for $s>t$, i.e. $\P[Y^{t,x}_s\in \d y]= g(t,x,s,y)\d y$. It is known that, for any fixed $s>0$ and $y\in\R$, $g$ satisfies the  backward Kolmogorov equation 
\be*
&&\partial_t g(t,x,s,y) =- \partial^2_{xx} g(t,x,s,y) - B^{\mu,\alpha}(t,x)\partial_x g(t,x,s,y),\quad \forall (t,x)\in  [0,s)\times\R, \\
&& g(s,x,s,y) = \delta_y(x),\quad \forall x\in\R.
\ee*
Then $\alpha$ admits the following representation.
\begin{Lemma}\label{lem:representation}
Under the conditions of Theorem \ref{thm:lim}, one has for all $s>0$
\beq 
\frac{\alpha(s)}{2}=\int_0^\infty  \rho(x)\d x\int_0^\infty g(0,x,s,y) \d y  - \int_0^\infty g(0,0,s,y)\d y- 
\int_0^s \alpha(t) \d t \int_0^\infty \partial_tg(t,0,s,y) \d y.
\label{eq:alpha}
\eeq  
\end{Lemma}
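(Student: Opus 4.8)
The plan is to set up a duality between the Dirichlet Fokker--Planck equation \eqref{eq:FP} satisfied by the sub-density $p$ and the backward Kolmogorov equation satisfied by $g$. Write $B\equiv B^{\mu,\alpha}$ and introduce the mass function
\[
G(t,x,s):=\int_0^\infty g(t,x,s,y)\,\d y=\P\big[Y^{t,x}_s>0\big],\qquad 0\le t<s.
\]
Integrating the backward Kolmogorov equation over $y\in(0,\infty)$ (differentiation under the integral being legitimate by the Aronson-type Gaussian bounds on $g$, $\partial_x g$, $\partial_t g$, valid since $B$ is bounded and H\"older), $G$ solves $\partial_t G+\partial^2_{xx}G+B\,\partial_x G=0$ on $\{t<s\}$, with terminal value $G(s,\cdot,s)=\1_{(0,\infty)}$ off $x=0$. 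The key computation is to differentiate $t\mapsto\int_0^\infty p(t,x)G(t,x,s)\,\d x$ on $t\in(0,s)$.

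\textbf{The exact differential.} Using the product rule together with $\partial_t p=\partial^2_{xx}p-\partial_x(Bp)$ and integrating by parts twice in $x$: all boundary terms at $x=+\infty$ vanish thanks to the Gaussian estimate \eqref{eq:regularity} on $p$ and $\partial_x p$ and the bounds $0\le G\le 1$, $\partial_x G$ bounded; at $x=0$ every term carrying the factor $p(t,0)$ vanishes since $p(t,0)=0$, leaving only $-\partial_x p(t,0)\,G(t,0,s)$; and the bulk integral $\int_0^\infty p\,(\partial_t G+\partial^2_{xx}G+B\,\partial_x G)\,\d x$ vanishes because $G$ solves the backward equation. Hence
\[
\frac{\d}{\d t}\int_0^\infty p(t,x)G(t,x,s)\,\d x=-\partial_x p(t,0)\,G(t,0,s),\qquad t\in(0,s).
\]
Integrating the Dirichlet equation over $x\in(0,\infty)$ and using $p(t,0)=0$ gives $\alpha'(t)=-\partial_x p(t,0)$, so the right-hand side is $\alpha'(t)G(t,0,s)$; in particular, since $\alpha$ is non-increasing, $\alpha'\in L^1(0,s)$.

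\textbf{Integration in $t$.} Integrating the last identity over $(0,s)$: on the left the telescoping, together with $G(s,\cdot,s)=\1_{(0,\infty)}$ a.e., $p(0,\cdot)=\rho$, and the passages $t\to0^+$ and $t\to s^-$ (justified via \eqref{eq:regularity}, Scheff\'e's lemma and dominated convergence, using $0\le G\le1$), gives $\alpha(s)-\int_0^\infty\rho(x)\big(\int_0^\infty g(0,x,s,y)\,\d y\big)\d x$. On the right, integrating by parts in $t$,
\[
\int_0^s\alpha'(t)G(t,0,s)\,\d t=\big[\alpha(t)G(t,0,s)\big]_0^s-\int_0^s\alpha(t)\Big(\int_0^\infty\partial_t g(t,0,s,y)\,\d y\Big)\d t,
\]
where $\alpha(0)=\int_0^\infty\rho=1$, $G(0,0,s)=\int_0^\infty g(0,0,s,y)\,\d y$, and $\lim_{t\to s^-}G(t,0,s)=\tfrac12$. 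Substituting and solving for $\alpha(s)/2$ yields exactly \eqref{eq:alpha}.

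\textbf{Main obstacle.} The delicate point is the boundary value $\lim_{t\to s^-}G(t,0,s)=\tfrac12$ --- precisely the origin of the factor $\tfrac12$ in \eqref{eq:alpha} --- together with the integrability of $t\mapsto\partial_t G(t,0,s)$ near $t=s$ needed for the last integration by parts. For the first, since $Y^{t,0}_s-\sqrt2(W_s-W_t)=\int_t^s B(r,Y^{t,0}_r)\,\d r$ is bounded by $\|b\|_\infty(s-t)$ while $\sqrt2(W_s-W_t)$ is symmetric of order $\sqrt{s-t}$, a scaling/comparison argument gives $\P[Y^{t,0}_s>0]\to\P[\mathcal N(0,1)>0]=\tfrac12$. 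For the second, the crude Gaussian bound on $\partial_t g$ only gives a non-integrable $(s-t)^{-1}$ singularity, so one must exploit the exact cancellation present in the driftless case (where $\int_0^\infty\partial_t g=0$): a parametrix expansion $g=\Gamma_0+(\text{correction})$ shows that the surviving contributions to $\partial_t G(t,0,s)$ come only from the drift shift in $\Gamma_0$ and from the parametrix series, each of order $(s-t)^{-1+\gamma/2}$ or better, hence integrable. The remaining bookkeeping (vanishing of the $x=+\infty$ boundary terms, differentiation under the integral sign for $G$) is routine given \eqref{eq:regularity} and the Gaussian bounds on $g$ and its derivatives.
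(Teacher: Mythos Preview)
Your argument is correct and follows essentially the same route as the paper: the paper also exploits the duality between the forward equation for $p$ and the backward Kolmogorov equation for $g$, derives $\alpha'(t)=-\partial_xp(t,0)$, obtains $p(s,y)=\int_0^\infty\rho(x)g(0,x,s,y)\,\d x+\int_0^s\alpha'(t)g(t,0,s,y)\,\d t$, integrates over $y\in(0,\infty)$, and then integrates by parts in $t$ using $\alpha(0)=1$ and $\int_0^\infty g(s,0,s,y)\,\d y=\tfrac12$. The only cosmetic difference is that you integrate in $y$ first (working with $G$) whereas the paper keeps $y$ fixed until the end; your version has the advantage that you explicitly address the two delicate points the paper leaves implicit, namely $\lim_{t\to s^-}G(t,0,s)=\tfrac12$ and the integrability of $t\mapsto\partial_tG(t,0,s)$ near $t=s$.
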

\begin{proof}
 Integrating the Fokker-Planck equation  of \eqref{eq:FP} over $x\in (0,\infty)$, one obtains
\be*
\int_0^\infty \partial_t p(t,x)\d x = \int_0^\infty \partial^2_{xx} p(t,x)\d x - \int_0^\infty \partial_x\big(B^{\mu,\alpha}(t,x) p(t,x)\big)\d x,
\ee*
which yields by Fubini's theorem and integration by parts
\be*
\alpha'(t) = \frac{\d }{\d t}\int_0^\infty p(t,x)\d x = \int_0^\infty \partial_tp(t,x)\d x =\int_0^\infty \left[\partial^2_{xx} p(t,x) - \partial_x \big(B^{\mu,\alpha}(t,x) p(t,x)\big)\right]\d x= -\partial_{x}p(t,0),\quad \forall t>0.
\ee*
Further, compute for all $(t,x)\in (0,s)\times (0,\infty)$
\be*
\partial_t (pg) + \partial_x \big(B^{\mu,\alpha}pg\big) - \partial_x \left(\partial_{x} pg - p\partial_x g\right) 
=\left(\partial_t p -\partial^2_{xx}p + \partial_x(B^{\mu,\alpha} p) \right)g + p \left(\partial_t g+ \partial^2_{xx} g + B^{\mu,\alpha}\partial_x g\right) = 0,
\ee*
which yields by integrating over $(0,s)\times (0,\infty)$
\be*
\int_0^s\int_0^\infty  \partial_t (pg) \d t \d x + \int_0^s\int_0^\infty  \partial_x \big(B^{\mu,\alpha}(t,x)pg\big)\d t \d x - \int_0^s\int_0^\infty \partial_x \left(\partial_{x} pg - p\partial_x g\right)\d t \d x = 0. \ee* 
Again, applying Fubini's theorem combined with the initial and  boundary conditions, one has
\be*
 0 
&=& \int_0^\infty \big( p(s,x)\delta_y(x) - \rho(x)g(0,x,s,y)\big) \d x +  \int_0^s \partial_{x}p(t,0)g(t,0,s,y) \d t\\
&= &p(s,y) - \int_0^\infty  \rho(x)g(0,x,s,y) \d x -  \int_0^s \alpha'(t) g(t,0,s,y) \d t,  \quad \forall (s,y)\in (0,\infty)\times (0,\infty).
\ee* 
Integrating $y$ over $(0,\infty)$ for both sides and using integration by parts, one has
\be* 
\alpha(s)&=& \int_0^\infty  \rho(x)\d x\int_0^\infty g(0,x,s,y) \d y +  \int_0^s \alpha'(t) \d t \int_0^\infty g(t,0,s,y) \d y \\
&=&\int_0^\infty  \rho(x)\d x\int_0^\infty g(0,x,s,y) \d y +  \frac{\alpha(t)}{2}- \int_0^\infty g(0,0,s,y)\d y- 
\int_0^s \alpha(t) \d t \int_0^\infty \partial_tg(t,0,s,y) \d y
\ee* 
which fulfils the proof. \qed
\end{proof}
We are now ready to prove the uniqueness of solution to \eqref{eq:lim}. We argue by contradiction and assume that there are two distinct solutions $X, X'$ to \eqref{eq:lim} on $[0,T]$. Denote $\mu_t:=\Lc(X_t), \nu_t:=\Lc(X'_t)$ and $\alpha(t):=\P[X_t>0], \beta(t):=\P[X'_t>0]$ for all $t\in [0,T]$. Define further by $p(t,\cdot),q(t,\cdot)$ the corresponding sub-probability densities of $\mu_t, \nu_t$. Write
\be* 
A(t,x):=\int_0^\infty b(t,x,y) p(t,y)\d y &\mbox{and}& B(t,x):=\int_0^\infty b(t,x,y) q(t,y)\d y
\ee* 
and the SDEs
\be* 
Y^{t,x}_s = x +  \int_t^s A(u,Y^{t,x}_u)\d u + \sqrt{2} (W_s-W_t) &\mbox{and}& Z^{t,x}_t = x +  \int_t^s B(u,Z^{t,x}_u)\d u + \sqrt{2} (W_s-W_t), \quad \forall s\ge t.
\ee* 
Let $f(t,x,s,\cdot), g(t,x,s,\cdot)$ be the densities of $Y_s^{t,x}, Z_s^{t,x}$. Hence, one has by Lemma \ref{lem:representation}
\be*
\alpha(s)& = &2\int_0^\infty  \rho(x)\d x\int_0^\infty f(0,x,s,y) \d y  - 2\int_0^\infty f(0,0,s,y)\d y- 
2\int_0^s \alpha(t) \d t \int_0^\infty \partial_t f(t,0,s,y) \d y \\
& = :& \Theta[\alpha,A](s) \\
\beta(s)& = &2\int_0^\infty  \rho(x)\d x\int_0^\infty g(0,x,s,y) \d y  - 2\int_0^\infty g(0,0,s,y)\d y- 
2\int_0^s \beta(t) \d t \int_0^\infty \partial_t g(t,0,s,y) \d y \\
& = :& \Theta[\beta,B](s)
\ee*
and similarly 
\be*
A(s,z)& = & \int_0^\infty \rho(x)\d x \int_0^\infty  b(s,z,y) f(0,x,s,y)  \d y + \frac{b(s,z,0)\alpha(s)}{2} - \int_0^\infty b(s,z,y) f(0,0,s,y)\d y\\
&& -  \int_0^s \alpha(t) \d t  \int_0^\infty b(s,z,y) \partial_t f(t,0,s,y)\d y ~~=:~~ \Lambda[\alpha,A](s,z) \\
B(s,z)& = & \int_0^\infty \rho(x)\d x \int_0^\infty  b(s,z,y) g(0,x,s,y)  \d y + \frac{b(s,z,0)\beta(s)}{2} - \int_0^\infty b(s,z,y) g(0,0,s,y)\d y\\
&& -  \int_0^s \beta(t) \d t  \int_0^\infty b(s,z,y) \partial_t g(t,0,s,y)\d y~~=:~~ \Lambda[\beta,B](s,z).
\ee*
\begin{Proposition}\label{prop:uniqueness}
\eqref{eq:FP} has a unique solution and thus the uniqueness of solutions to \eqref{eq:lim} holds. 
\end{Proposition}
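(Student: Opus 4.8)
The plan is to exploit the fixed-point representations derived just above. By Lemma~\ref{lem:representation} and the $b$-weighted version displayed after it, the pair $(\alpha,A)$ attached to the solution $X$ satisfies $\alpha=\Theta[\alpha,A]$ and $A=\Lambda[\alpha,A]$, and $(\beta,B)$ is a fixed point of the very same operator. It therefore suffices to prove that this operator has at most one fixed point on $[0,T_0]$ for $T_0$ small, with $T_0$ depending only on $\|b\|_\infty$ and the H\"older data of Assumption~\ref{ass:1}-(iii), and then to iterate over $[0,T]$. Once $\alpha\equiv\beta$ and $A\equiv B$ on $[0,T]$ are established, the representation of $p$ obtained inside the proof of Lemma~\ref{lem:representation} forces $p\equiv q$, hence $\mu\equiv\nu$; then $X$ and $X'$ solve the \emph{same} linear SDE with absorption \eqref{eq:linearSDE} driven by the now-fixed Lipschitz drift $B^{\mu,\alpha}$, so $X=X'$ by strong uniqueness, and uniqueness of \eqref{eq:FP} follows by Proposition~\ref{prop:equiv}.

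The analytic heart of the matter is a pair of quantitative stability estimates comparing the transition density $f$ (drift $A$) with $g$ (drift $B$). Since $A$ and $B$ are bounded by $\|b\|_\infty$ and, as noted in the proof of Proposition~\ref{prop:propertiesMarginals}, have bounded and H\"older $x$-derivatives, the Gaussian bounds of \cite{GM1992} hold for $f,g$ and their derivatives with constants depending only on $\|b\|_\infty$, the H\"older data and $T$. From these (or, for the first one, directly from Girsanov's theorem and Pinsker's inequality applied to the two drifted Brownian motions on $[t,s]$) one gets, for a constant $C$ of the above type,
\be*
\sup_{x\in\R}\int_0^\infty \big|f(t,x,s,y)-g(t,x,s,y)\big|\,\d y \;\le\; C\sqrt{s-t}\;\|A-B\|_{\infty,[0,s]},
\ee*
and, at the boundary point $x=0$,
\be*
\int_0^\infty \big|\partial_t f(t,0,s,y)\big|\,\d y \le \frac{C}{\sqrt{s-t}}, \qquad
\int_0^\infty \big|\partial_t f(t,0,s,y)-\partial_t g(t,0,s,y)\big|\,\d y \le \frac{C}{\sqrt{s-t}}\,\|A-B\|_{\infty,[0,s]}.
\ee*
The subtle point in the second line is that, although a generic second spatial derivative of $\int_0^\infty f\,\d y$ is only $O((s-t)^{-1})$, the most singular contribution cancels exactly at $x=0$ — as it does in the drift-free model, where $\partial_t\Phi\big(0/\sqrt{2(s-t)}\big)=0$ — leaving only the time-integrable singularity $(s-t)^{-1/2}$; and that replacing one parametrix by the other gains a full power of $\|A-B\|_\infty$. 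Establishing these estimates rigorously and with drift-uniform constants is the step I expect to be the main obstacle.

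Granting them, the contraction closes cheaply. Write $a(s):=|\alpha(s)-\beta(s)|$, $M(s):=\sup_{z\in\R}|A(s,z)-B(s,z)|$ and $N(s):=\sup_{t\le s}(a(t)+M(t))$. In the difference $\Theta[\alpha,A](s)-\Theta[\beta,B](s)$ every term carries a factor $\sqrt{s}$: the $\rho$-averaged term and the $f(0,0,s,\cdot)$ term directly (the weight $\int_0^\infty\rho=1$ being harmless), and the $\int_0^s\alpha(t)\,\d t\int_0^\infty\partial_t f$ term after using $\int_0^s(s-t)^{-1/2}\,\d t=2\sqrt s$; hence $a(s)\le C\sqrt s\,N(s)$, with $C$ \emph{independent of the initial datum} because each term only ever sees the total mass $\alpha(\cdot)\le1$ of a sub-probability density. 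The same substitution in $\Lambda[\alpha,A](s,z)-\Lambda[\beta,B](s,z)$ gives $M(s)\le\frac12\|b\|_\infty\,a(s)+C\sqrt s\,N(s)$, the only term lacking intrinsic smallness, namely $\frac12 b(s,z,0)\big(\alpha(s)-\beta(s)\big)$, being controlled through the bound just obtained for $a(s)$; thus $M(s)\le C'\sqrt s\,N(s)$ and $N(T_0)\le C''\sqrt{T_0}\,N(T_0)$. Choosing $T_0$ with $C''\sqrt{T_0}<1$ forces $N\equiv0$, i.e. $\alpha=\beta$ and $A=B$ on $[0,T_0]$. Because $C''$ does not depend on the initial datum and $p(T_0,\cdot)$ is again a bounded sub-probability density by the Gaussian upper bound, the same argument runs on $[T_0,2T_0]$, and a finite number of steps covers $[0,T]$, which concludes the proof as in the first paragraph.
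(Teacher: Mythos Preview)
Your proposal is correct and follows essentially the same route as the paper: the same fixed-point representation $(\alpha,A)=(\Theta[\alpha,A],\Lambda[\alpha,A])$, the same handling of the non-small term $\tfrac12 b(s,z,0)(\alpha-\beta)$ by re-substituting $\alpha=\Theta[\alpha,A]$, and the same contraction-plus-iteration scheme with constants independent of the initial datum. The paper establishes the stability estimates you flag as the main obstacle via a full parametrix expansion (Lemmas~\ref{lem:menozzi}, \ref{lem:I1}, \ref{lem:I23}), where indeed the cancellation at $x=0$ you point to is what produces the integrable $(s-t)^{-1/2}$ singularity; your Girsanov--Pinsker shortcut for the first $L^1$ bound is a minor technical variant, not a change of strategy.
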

\begin{proof}
The proof relies essentially on the estimation of $|\Theta[\alpha,A](s)-\Theta[\beta,B](s)|$ and $|\Lambda[\alpha,A](s)-\Lambda[\beta,B](s)|:=\sup_{z\in \R_+}|\Lambda[\alpha,A](s,z)-\Lambda[\alpha,A](s,z)|$. Write further $\Lambda:=\Lambda_0+\Lambda_1$, where 
\be* 
\Lambda_0[\alpha,A](s) &:=&   \int_0^\infty \rho(x)\d x \int_0^\infty  b(s,z,y) f(0,x,s,y)  \d y - \int_0^\infty b(s,z,y) f(0,0,s,y)\d y\\
&& -  \int_0^s \alpha(t) \d t  \int_0^\infty b(s,z,y) \partial_t f(t,0,s,y)\d y \\
\Lambda_1[\alpha,A](s,z) &:=& \frac{b(s,z,0)\alpha(s)}{2}
\ee*
and $\Lambda_0[\beta,B], \Lambda_1[\beta,B]$ are defined similarly. 

\vspace{1mm}

\noindent \emph{Step 1.}  Recall  $\|f\|_t:=\sup_{0\le u\le t}|f(u)|$ for  $f\in\Omega_T$,. 
We claim that there exists $C>0$ such that 
\beq \label{eq:contraction}
\|\Theta[\alpha,A] - \Theta[\beta,B]\|_s + \|\Lambda_0[\alpha,A] - \Lambda_0[\beta,B]\|_s \le 
C\sqrt{s}\big(\|\alpha-\beta\|_s+\|A-B\|_s\big),\quad \forall s\in [0,T]. 
\eeq 
Let us show that the desired uniqueness follows from the inequality \eqref{eq:contraction}. Namely, combining with $\alpha=\Theta[\alpha,A], \beta=\Theta[\beta,B], A=\Lambda[\alpha,A],  B=\Lambda[\beta,B]$, \eqref{eq:contraction} yields
\be* 
\|\alpha - \beta\|_s + \|A - B\|_s 
 &=&\|\Theta[\alpha,A] - \Theta[\beta,B]\|_s + \|\Lambda[\alpha,A] - \Lambda[\beta,B]\|_s \\
 &\le &\|\Theta[\alpha,A] - \Theta[\beta,B]\|_s + \|\Lambda_0[\alpha,A] - \Lambda_0[\beta,B]\|_s +\|\Lambda_1[\alpha,A] - \Lambda_1[\beta,B]\|_s \\
 &\le &
C\sqrt{s}\big(\|\alpha-\beta\|_s+\|A-B\|_s\big)+\|\Lambda_1[\alpha,A] - \Lambda_1[\beta,B]\|_s \\
&\le &
C\sqrt{s}\big(\|\alpha-\beta\|_s+ \|A - B\|_s \big)+\|b\|\|\alpha - \beta\|_s \\
&= &
C\sqrt{s}\big(\|\alpha-\beta\|_s+ \|A - B\|_s \big)+\|b\|\|\Theta[\alpha,A] - \Theta[\beta,B]\|_s \\
&\le &
C(1+\|b\|)\sqrt{s}\big(\|\alpha-\beta\|_s+\|A-B\|_s\big)
\\
&\le &
\overline C\sqrt{s}\big(\|\alpha-\beta\|_s+\|A-B\|_s\big)
,\quad \forall s\in [0,T]. 
\ee*  
This implies that $\|\alpha - \beta\|_s + \|A - B\|_s=0$ whenever $s\le \min(1/\overline C^2,T)=:\Delta T>0$. Repeating the above reasoning on the interval $[\Delta T, T]$ with an alternative initial condition, one may deduce, after dividing a finite number of the interval $[0,T]$, $\|\alpha - \beta\|_T=\|A - B\|_T=0$. Hence the uniqueness is derived. 
\be* 
\|\Lambda[c] - \Lambda[c']\|_t &=& \|\Lambda_0[c] - \Lambda_0[c']\|_t + \|\Lambda_1[c] - \Lambda_1[c']\|_t \\
&\le &\|\Lambda_0[c] - \Lambda_0[c']\|_t + \|\Lambda^+_1[c] - \Lambda^+_1[c']\|_t+ \|\Lambda^-_1[c] - \Lambda^-_1[c']\|_t \\
&= &\|\Lambda^+[c] - \Lambda^+[c']\|_t + \|\Lambda^-_1[c] - \Lambda^-_1[c']\|_t \\
&\le & C\sqrt{(t-t^*)^+}\|c-c'\|_t +\frac{\overline b}{2} \|c_0 - c'_0\|_t \\
&= & C\sqrt{(t-t^*)^+}\|c-c'\|_t +\frac{\overline b}{2} \|\Lambda_0[c] - \Lambda_0[c']\|_t \\
&\le  & C\left(1+\frac{\overline b}{2}\right)\sqrt{(t-t^*)^+}\|c-c'\|_t, 
\ee* 
which yields a contraction when $t$ is close enough to $t^*$. 

\vspace{1mm}

\emph{Step 2.} It remains to prove  \eqref{eq:contraction}. Set $\Delta_s:=\|\alpha - \beta\|_s + \|A - B\|_s $. In what follows, $R, r>0$ always denote the constants that may vary from line to line. By assumption, it holds
\beq \label{bound-coefs}
\|A\|_T +\|B\|_T + \|\partial_x A\|_T + \|\partial_x B \|_T \le R.
\eeq 
Let us recall the parametrix expressions of $f$ and $g$ by Aronson as follows:
\be* 
f(t,x,s,y) =q(t,x,s,y)+ \sum_{k=1}^{\infty} q\otimes F^{(k)}(t,x,s,y) &\mbox{and}& g(t,x,s,y) =q(t,x,s,y)+ \sum_{k=1}^{\infty} q\otimes G^{(k)}(t,x,s,y),
\ee* 
where $q(t,x,s,y):=\phi(2(s-t),y-x)$, $\phi(r,z):=e^{-z^2/2r}/\sqrt{2\pi r}$,  $F(t,x,s,y):=A(t,x)\partial_{x}q(t,x,s,y)$, $G(t,x,s,y):=B(t,x)\partial_{x}q(t,x,s,y)$ and $\otimes$ denotes the time-space convolution, i.e. 
\be* 
q\otimes F(t,x,s,y)&:=& \int_t^s \d u \int_{-\infty}^\infty q(t,x, u,z)F(u,z,s,y)\d z \\ 
q\otimes F^{(k)}(t,x,s,y)&:=& (q\otimes F^{(k-1)}) \otimes F(t,x,s,y)(t,x,s,y),\quad   \mbox{for all } k\ge 2.  
\ee* 
Hence, one obtains by a straightforward computation
\be* 
\frac{1}{2}\big | \Theta[\alpha,A](s)-\Theta[\beta,B](s) \big| &\le & \left|\int_0^\infty  \rho(x)I_1(x)\d x\right| + |I_1(0)| +\Delta_s\int_0^s I_2(t) \d t + \int_0^s I_3(t) \d t, \\
\big | \Lambda_0[\alpha,A](s,z)-\Lambda_0[\beta,B](s,z) \big| &\le & \left|\int_0^\infty  \rho(x)J_1(x)\d x\right| + |J_1(0)| +\Delta_s\int_0^s J_2(t) \d t + \int_0^s J_3(t) \d t, 
\ee* 
where 
\be* 
I_1(x) := \int_0^\infty \big( f(0,x,s,y)-g(0,x,s,y)\big)\d y &\mbox{and}& J_1(x) := \int_0^\infty b(s,z,y) \left(f(0,x,s,y)- g(0,x,s,y)\right)\d y\\ 
I_2(t) :=\left| \int_0^{\infty} \partial_t f(t,0,s,y)\d y\right| &\mbox{and}& J_2(t) :=\left| \int_0^{\infty} b(s,z, y) \partial_t f(t,0,s,y)\d y\right|\\
I_3(t) :=\left| \int_0^{\infty} \big(\partial_t f(t,0,s,y)-\partial_t g(t,0,s,y)\big)\d y \right|&\mbox{and}& J_3(t) :=\left| \int_0^{\infty} b(s,z,y)\left(\partial_t f(t,0,s,y)-\partial_t g(t,0,s,y)\right)\d y \right|. 
\ee* 
In view of Lemmas \ref{lem:I1} and \ref{lem:I23} below, there exists $R>0$ such that
\be* 
\big | \Theta[\alpha,A](s)-\Theta[\beta,B](s) \big|  + \big | \Lambda_0[\alpha,A](s,z)-\Lambda_0[\beta,B](s,z) \big| \le
 R s^{1/2}\Delta_s 
\ee* 
which completes the proof. \qed
\end{proof}
To establish Lemmas \ref{lem:I1} and \ref{lem:I23}, we need the following preliminary result whose proof adopts almost the same reasoning of Lemma 3 in  \cite{Konakov2015STABILITYOD}. 
\begin{Lemma}\label{lem:menozzi}
With the above notation, there exists $R, r>0$ such that
\beq  
\big| F^{(k)}(t,x,s,y)\big |+ \big| G^{(k)}(t,x,s,y)\big | &\le&  k\frac{R^k\Gamma(1/2)^{k-1}}{\Gamma((k+1)/2)}\phi(r(s-t),y-x)(t-s)^{k/2-1}\label{eq2}\label{ineq:esti1}\\
\big| F^{(k)}(t,x,s,y)- G^{(k)}(t,x,s,y)\big | &\le&  k\Delta_s \frac{C^k\Gamma(1/2)^{k-1}}{\Gamma((k+1)/2)}\phi(r(s-t),y-x)(t-s)^{k/2-1}\label{ineq:esti2}
\eeq 
hold for all $0\le t<s \le T$, $x,y\in\R$ and $k\ge 1$, where $\Gamma: \R_+^*\to\R$ is the Gamma function defined by
\be*  
\Gamma(z):=\int_0^\infty a^{z-1}e^{-a}\d a.
\ee* 
\end{Lemma}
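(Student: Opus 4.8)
The plan is to prove \eqref{ineq:esti1} and \eqref{ineq:esti2} simultaneously by induction on $k$, using the convolution recursion $q\otimes F^{(k)} = (q\otimes F^{(k-1)})\otimes F$ together with the standard Gaussian convolution estimates of the parametrix method. The base case $k=1$ is direct: since $F(t,x,s,y)=A(t,x)\partial_x q(t,x,s,y)$ and $|\partial_x q(t,x,s,y)| = |y-x|/(2(s-t))\cdot \phi(2(s-t),y-x) \le C(s-t)^{-1/2}\phi(r(s-t),y-x)$ for a slightly inflated variance $r>2$, the bound \eqref{ineq:esti1} with $k=1$ follows from $\|A\|_T\le R$ in \eqref{bound-coefs}. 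For \eqref{ineq:esti2} with $k=1$, write $F-G = (A-B)\partial_x q$, so $|F-G|\le \|A-B\|_s\,|\partial_x q| \le \Delta_s \cdot C(s-t)^{-1/2}\phi(r(s-t),y-x)$, which is the claimed bound since $\Delta_s = \|\alpha-\beta\|_s+\|A-B\|_s \ge \|A-B\|_s$.

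For the inductive step I would use two elementary facts about the Gaussian kernels: first, the Chapman--Kolmogorov--type identity/inequality $\int_{-\infty}^\infty \phi(r(u-t),z-x)\,\phi(r(s-u),y-z)\,dz = \phi(r(s-t),y-x)$ (semigroup property for fixed variance parameter $r$); second, the Beta-function time integral $\int_t^s (u-t)^{a-1}(s-u)^{b-1}\,du = (s-t)^{a+b-1}\,\Gamma(a)\Gamma(b)/\Gamma(a+b)$. Assuming \eqref{ineq:esti1} for $k-1$, one estimates
\[
|F^{(k)}(t,x,s,y)| \le \int_t^s\!\!\int_{\R} |F^{(k-1)}(t,x,u,z)|\,|F(u,z,s,y)|\,dz\,du,
\]
plugs in the inductive bound on $|F^{(k-1)}|$ and the $k=1$ bound on $|F|$, applies the spatial convolution identity to collapse the two Gaussians into $\phi(r(s-t),y-x)$, and is left with the time integral $\int_t^s (u-t)^{(k-1)/2-1}(s-u)^{1/2-1}\,du = (s-t)^{k/2-1}\Gamma((k-1)/2)\Gamma(1/2)/\Gamma(k/2)$; tracking the constants $R^{k-1}\cdot R = R^k$ and the combinatorial prefactor $(k-1)+1 = k$ (absorbing the extra factor coming from $|A(u,z)|$ already into $R$) gives exactly the right-hand side of \eqref{ineq:esti1}. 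The same computation with $|F^{(k-1)}-G^{(k-1)}|$ in place of $|F^{(k-1)}|$, using the algebraic splitting
\[
F^{(k)} - G^{(k)} = (q\otimes F^{(k-1)} - q\otimes G^{(k-1)})\otimes F \;+\; (q\otimes G^{(k-1)})\otimes (F-G),
\]
handles each of the two terms by the inductive hypothesis \eqref{ineq:esti2} for $k-1$ (contributing $\Delta_s$) respectively by the $k=1$ difference bound (contributing $\Delta_s$ again, with the uniform bound on $G^{(k-1)}$ from \eqref{ineq:esti1}); collecting yields \eqref{ineq:esti2}.

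The main obstacle I anticipate is purely bookkeeping: making sure the variance-inflation parameter $r$ chosen to absorb the polynomial factor $|y-x|/(s-t)$ from each $\partial_x q$ is chosen once and for all (independent of $k$) so that the iterated spatial convolutions still close, and verifying that the constant $R$ can be taken uniform in $k$ — i.e. that no factor growing in $k$ other than the explicit $k\,\Gamma(1/2)^{k-1}/\Gamma((k+1)/2)$ appears. Since $\Gamma(1/2)^{k-1}/\Gamma((k+1)/2)$ decays super-geometrically, once \eqref{ineq:esti1}--\eqref{ineq:esti2} are established the series defining $f$, $g$ and their difference converge absolutely and can be differentiated in $t$ term by term, which is what the subsequent Lemmas \ref{lem:I1} and \ref{lem:I23} will exploit. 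I would follow the argument of Lemma 3 in \cite{Konakov2015STABILITYOD} essentially verbatim, only inserting $\Delta_s$ and the difference-of-coefficients splitting wherever the stability estimate \eqref{ineq:esti2} is concerned.
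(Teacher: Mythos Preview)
Your proposal is correct and matches the paper's approach exactly: the paper does not spell out a proof but simply says the argument ``adopts almost the same reasoning of Lemma~3 in \cite{Konakov2015STABILITYOD}'', which is precisely the parametrix induction (Gaussian semigroup plus Beta-integral bookkeeping) you outline. One small notational slip: in your algebraic splitting for the difference you should write $F^{(k)}-G^{(k)} = (F^{(k-1)}-G^{(k-1)})\otimes F + G^{(k-1)}\otimes(F-G)$ rather than prefixing with $q\otimes$, since $F^{(k)}=F^{(k-1)}\otimes F$; otherwise the plan is sound.
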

\subsection{Estimation of $I_1/J_1$}
\begin{Lemma}\label{lem:I1}
There exists $R>0$ such that 
\be*
\left| \int_0^\infty p_0(x) I_1(x)\d x\right | + \left| \int_0^\infty p_0(x) J_1(x)\d x\right|  + |I_1(0)| + |J_1(0)| &\le & R(s-t)^{1/2}\Delta_s
\ee*
\end{Lemma}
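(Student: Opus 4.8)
The plan is to estimate the difference $f(0,x,s,y)-g(0,x,s,y)$ using the parametrix (Aronson) representations already recalled, and then integrate against $\rho$ (or evaluate at $x=0$) and against $b$. Writing
\be*
f(0,x,s,y)-g(0,x,s,y) = \sum_{k=1}^\infty \big(q\otimes F^{(k)} - q\otimes G^{(k)}\big)(0,x,s,y),
\ee*
the leading term $q(0,x,s,y)$ cancels since it does not depend on the drift. First I would bound each summand using Lemma~\ref{lem:menozzi}: the bound \eqref{ineq:esti2} on $F^{(k)}-G^{(k)}$ gives
\be*
\big|(q\otimes F^{(k)} - q\otimes G^{(k)})(0,x,s,y)\big| \le k\Delta_s\frac{C^k\Gamma(1/2)^{k-1}}{\Gamma((k+1)/2)}\int_0^s\int_{\R} \phi(2u,z-x)\phi(r(s-u),y-z)(s-u)^{k/2-1}\,\d z\,\d u,
\ee*
and the space convolution of two Gaussians is again Gaussian (up to adjusting $r$), while the time integral $\int_0^s (s-u)^{k/2-1}\d u = \frac{2}{k}s^{k/2}$ produces the factor $s^{k/2}$ that, after summing the series $\sum_k \frac{C^k\Gamma(1/2)^{k-1}}{\Gamma((k+1)/2)}s^{k/2}$ (which converges for all $s$ by the super-exponential decay of $1/\Gamma((k+1)/2)$), yields a bound of the form $R s^{1/2}\Delta_s$ times a Gaussian in $y-x$. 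Then $\int_0^\infty (\cdots)\d y \le 1$ by the Gaussian mass bound, and $\int_0^\infty \rho(x)\d x\le 1$ handles the $I_1$ and $J_1$ integrals; the $J_1$ case additionally uses $\|b\|_\infty<\infty$ to pull $b(s,z,y)$ out. The evaluations $|I_1(0)|,|J_1(0)|$ follow by taking $x=0$ in the same pointwise bound and integrating in $y$.

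More carefully, I would first establish the single-step estimate
\be*
\Big|\int_0^\infty \big(f(0,x,s,y)-g(0,x,s,y)\big)\d y\Big| \le R s^{1/2}\Delta_s \quad \text{uniformly in } x\ge 0,
\ee*
since this immediately implies all four quantities in the statement after multiplying by $\rho(x)$ or $b(s,z,y)$ and integrating (using $\|\rho\|_{L^1}=1$, $\|b\|_\infty<\infty$, and the fact that $\int_0^\infty \phi(r(s-t),y-x)\d y\le 1$). Note $s-t$ in the statement should be read with $t=0$ here, so $(s-t)^{1/2}=s^{1/2}$; I would keep the general $t$ only if the lemma is later applied with the convolution started at time $t$, in which case the same computation with $\int_t^s(s-u)^{k/2-1}\d u=\frac{2}{k}(s-t)^{k/2}$ gives the stated $(s-t)^{1/2}$.

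The main obstacle is bookkeeping the constants through the series summation and making sure the Gaussian convolution identity is applied with a uniform choice of the decay parameter $r$ (independent of $k$); this is where one must be a little careful, because naive iteration of $\phi(r\cdot)\ast\phi(r\cdot)$ degrades $r$ at each step. The standard fix — also used implicitly in Lemma~\ref{lem:menozzi} and in \cite{Konakov2015STABILITYOD} — is to absorb a fixed fraction of the Gaussian decay at each convolution, i.e. to use $\phi(r_1 a,\cdot)\ast\phi(r_1 b,\cdot)=\phi(r_1(a+b),\cdot)$ and then dominate $\phi(r_1(a+b),\cdot)\le C\,\phi(r(a+b),\cdot)$ for a final $r$, with the extra constant folded into $R^k$; since $R^k/\Gamma((k+1)/2)$ is still summable, nothing is lost. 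Everything else (Fubini, the Beta-function time integral, $\int\phi\le 1$) is routine.
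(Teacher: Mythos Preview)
Your proposal is correct and follows essentially the same route as the paper: expand $f-g$ via the parametrix series, note that the zeroth-order Gaussian term cancels, bound each $q\otimes(F^{(k)}-G^{(k)})$ using Lemma~\ref{lem:menozzi} together with the Gaussian convolution identity and the Beta-type time integral, and sum the resulting series $\sum_k C^k s^{k/2}/\Gamma((k+1)/2)$. The paper's proof states exactly the resulting termwise bound $(k+1)s^{k/2}\frac{R^{k+1}\Gamma(1/2)}{\Gamma(1+k/2)}\phi(rs,y-x)\Delta_s$ and then integrates and sums; your write-up actually supplies more of the intermediate computation (and correctly flags the bookkeeping issue with the Gaussian decay parameter $r$, which the paper handles implicitly via Lemma~\ref{lem:menozzi}).
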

\begin{proof}
First, we make use of the following estimation for $k\ge 1$
\be*
\left|\tp\otimes G^{(k)}(0,x,s,y) -\tq\otimes H^{(k)}(0,x,s,y)\right|  &\le &  (k+1)s^{k/2}\frac{R^{k+1}\Gamma(1/2)}{\Gamma(1+k/2)}\phi(rs,y-x)\Delta_s,
\ee*
which yields
\be* 
\left|\int_0^\infty p_0(x) I_1(x)\d x\right| \le  \sum_{k=1}^\infty\int_0^\infty p_0(x) \d x\left|\int_0^\infty \tp\otimes G^{(k)}(0,x,s,y) \d y-\int_0^\infty \tq\otimes H^{(k)}(0,x,s,y)\d y\right| \le R\sqrt{s}\Delta_s.
\ee* 
As for $J_1$, carrying out the similar reasoning for the terms with $k\ge 1$, one has 
\be* 
\left|\int_0^\infty p_0(x) J_1(x)\d x\right| + |J_1(0)| \le  R\sqrt{s}\Delta_s.
\ee* 
\qed 
\end{proof}
\subsection{Estimation of $I_2$/$J_2$ and $I_3$/$J_3$}
\begin{Lemma}\label{lem:I23}
There exists $R>0$ such that 
\be*
|I_2(t)| + |J_2(t)| \le R(s-t)^{1/2} &\mbox{and}& |I_3(t)| + |J_3(t)|\le R(s-t)^{1/2}\Delta_s.
\ee*
\end{Lemma}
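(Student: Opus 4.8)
The plan is to estimate the four quantities $I_2,J_2,I_3,J_3$ by expanding the densities $f,g$ via the Aronson parametrix series and differentiating in the backward time variable $t$. First I would recall that $\partial_t f(t,0,s,y)=\partial_t q(t,0,s,y)+\sum_{k\ge 1}\partial_t\big(q\otimes F^{(k)}\big)(t,0,s,y)$, and similarly for $g$ with $G^{(k)}$. For the Gaussian kernel $q(t,x,s,y)=\phi(2(s-t),y-x)$ one has the classical bound $|\partial_t q(t,0,s,y)|\le R(s-t)^{-1}\phi(r(s-t),y)$, which after integrating $y$ over $(0,\infty)$ gives a contribution of size $R(s-t)^{-1}$; however the $\d t$-integral in $\Theta$ and $\Lambda_0$ is against $\alpha(t)$ (or against differences), and the factor $(s-t)^{-1}$ is integrable near $t=s$ only after one extra step — so in fact I must keep track of an additional half power. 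The cleaner route is to differentiate the convolution terms using the identity $\partial_t\big(q\otimes F^{(k)}\big)=\partial_t q\otimes F^{(k)}$ plus boundary terms, and to use Lemma \ref{lem:menozzi} termwise: the $k$-th term of $\partial_t f$ is bounded by $k\frac{R^k\Gamma(1/2)^{k-1}}{\Gamma((k+1)/2)}\phi(r(s-t),y)(s-t)^{k/2-1}\cdot(s-t)^{-1/2}$ type expressions, and summing the series (which converges by the $1/\Gamma((k+1)/2)$ factor, exactly as in the Aronson/Konakov–Menozzi estimates) yields $|\partial_t f(t,0,s,y)|\le R(s-t)^{-1/2}\phi(r(s-t),y)$ once one term of the singularity is absorbed by the vanishing boundary value $f(t,0,s,\cdot)$ — more precisely one uses that $\int_0^\infty\partial_t f(t,0,s,y)\,\d y$ has a better bound than the pointwise one because of a cancellation at $y=0$. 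Integrating $\phi(r(s-t),y)$ over $y\in(0,\infty)$ gives a bounded factor, so $|I_2(t)|\le R(s-t)^{1/2}$; the same computation with the extra bounded weight $b(s,z,y)$ (here Assumption \ref{ass:1}-(iii), boundedness of $b$, is what is used) gives $|J_2(t)|\le R(s-t)^{1/2}$, uniformly in $z$.

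For $I_3,J_3$ the idea is identical but one compares $f$ with $g$ term by term: $\partial_t f(t,0,s,y)-\partial_t g(t,0,s,y)=\sum_{k\ge1}\partial_t\big(q\otimes(F^{(k)}-G^{(k)})\big)(t,0,s,y)$ since the leading term $q$ is common and cancels. Now I invoke estimate \eqref{ineq:esti2} of Lemma \ref{lem:menozzi}, which carries the extra factor $\Delta_s$, and the same summation as above produces $|\partial_t f(t,0,s,y)-\partial_t g(t,0,s,y)|\le R(s-t)^{-1/2}\phi(r(s-t),y)\Delta_s$; integrating in $y$ and using boundedness of $b$ for the $J_3$ case gives $|I_3(t)|+|J_3(t)|\le R(s-t)^{1/2}\Delta_s$.

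The main obstacle will be the differentiation in the backward variable $t$: naively $\partial_t q\sim(s-t)^{-1}$, which is \emph{not} $(s-t)^{1/2}$-bounded, so the claimed bound can only hold because one integrates $\partial_t f(t,0,s,y)$ in $y$ against a kernel that kills the most singular part (the Dirichlet boundary condition $f(t,0,s,0)=0$, and the fact that we evaluate at the spatial point $x=0$ on the boundary). Making this cancellation quantitative — i.e. showing $\big|\int_0^\infty\partial_t f(t,0,s,y)\,\d y\big|\le R(s-t)^{-1/2}$ rather than merely $R(s-t)^{-1}$ — is the delicate point, and is where the structure of $q(t,0,s,y)=\phi(2(s-t),y)$ (odd/even symmetry in $y$, explicit Gaussian moments) must be used; I would handle it by writing $\partial_t q(t,0,s,y)=\tfrac{1}{2(s-t)}\big(\tfrac{y^2}{2(s-t)}-1\big)\phi(2(s-t),y)$ and integrating explicitly over $y>0$, and then checking the convolution corrections inherit the same improved bound via Lemma \ref{lem:menozzi}. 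The remaining steps — summing the parametrix series and inserting the bounded weight $b$ — are routine given Lemma \ref{lem:menozzi}.
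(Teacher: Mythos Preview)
Your overall plan---parametrix expansion, Leibniz differentiation of $q\otimes F^{(k)}$ in $t$, and termwise appeal to Lemma~\ref{lem:menozzi}---is the paper's route, and you correctly locate the obstacle: the raw bound $|\partial_t q(t,0,u,z)|\le R(u-t)^{-1}\phi(r(u-t),z)$ is non-integrable at $u=t$ in the convolution $\int_t^s\!\int_{\R}\partial_t q(t,0,u,z)F^{(k)}(u,z,s,y)\,\d z\,\d u$. But your proposed resolution does not work. First, $f(t,x,s,\cdot)$ is the transition density of the \emph{unabsorbed} diffusion $Y^{t,x}$ on the whole line; there is no Dirichlet condition and $f(t,0,s,0)\neq 0$, so that mechanism is a red herring. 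Second, integrating in the terminal variable $y$ does kill the $k=0$ term (since $\int_0^\infty q(t,0,s,y)\,\d y=1/2$ is constant in $t$), but the singularity in the convolution terms sits in the \emph{intermediate} variables $(u,z)$, and no amount of $y$-integration removes it: after integrating out $y$ you are left with $\int_t^s\!\int_{\R}\partial_t q(t,0,u,z)\,\Psi^{(k)}(u,z)\,\d z\,\d u$ with $\Psi^{(k)}(u,z):=\int_0^\infty F^{(k)}(u,z,s,y)\,\d y$, and Lemma~\ref{lem:menozzi} only gives $|\Psi^{(k)}|\le C(s-u)^{k/2-1}$, which still leaves a divergent $\int_t^s(u-t)^{-1}(s-u)^{k/2-1}\d u$ if one uses $|\partial_t q|$ pointwise.

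What the paper actually does is a \emph{centering trick in the convolution variable} $z$: write $F^{(k)}(u,z,s,y)=F^{(k)}(u,0,s,y)+\big[F^{(k)}(u,z,s,y)-F^{(k)}(u,0,s,y)\big]$. The first piece factors out of the $z$-integral and meets $\int_{\R}\partial_t q(t,0,u,z)\,\d z$, which is $0$ here (constant diffusion) or at worst $O((u-t)^{-1/2})$; the second piece requires a spatial H\"older estimate $\int_0^\infty|F^{(k)}(u,z,s,y)-F^{(k)}(u,0,s,y)|\,\d y\le C|z|^\gamma(s-u)^{(k-\gamma)/2-1}$, which is \emph{not} contained in Lemma~\ref{lem:menozzi} and is derived separately (the paper's $I_{33}$ computation). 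The factor $|z|^\gamma$ then converts the $(u-t)^{-1}$ from $\partial_t q$ into an integrable $(u-t)^{\gamma/2-1}$ after the Gaussian $z$-integral. This H\"older-in-$z$ step is the missing idea in your plan. (As a side remark, the exponent in the stated bound appears to be a slip: the estimates actually produce $(s-t)^{-1/2}$, not $(s-t)^{+1/2}$, as one already sees from the $k=1$ boundary term $I_{31}$; this is harmless since only $\int_0^s I_j(t)\,\d t\le Rs^{1/2}$ is used downstream.)
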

\begin{proof}
As the estimation of $I_2$/$J_2$ can be seen as a particular case of that of $I_3$/$J_3$, we  only deal with $I_3$/$J_3$.  We start by estimating $I_3$.
\be*
I_3(t) &\le& \sum_{k=1}^\infty \left|\int_0^\infty \big( \partial_t\big(\tp \otimes G^{(k)}\big)(t,0,s,y)-\partial_t\big(\tq \otimes H^{(k)}\big)(t,0,s,y)\big)\d y\right|.
\ee* 
Next, 
\be* 
\int_0^\infty \partial_t\big(\tp \otimes G^{(k)}\big)(t,0,s,y)\d y &=&  \int_0^\infty \partial_t\left(\int_t^s\int_{-\infty}^\infty \tp(t,0,u,z) G^{(k)}(u,z,s,y)\d z \d u\right)\d y \\
&=& -\int_0^\infty G^{(k)}(t,0,s,y)\d y + \int_0^\infty \int_t^s\int_{-\infty}^\infty \partial_t\tp(t,0,u,z) G^{(k)}(u,z,s,y)\d z \d u\d y 
\ee* 
and similarly
\be* 
\int_0^\infty \partial_t\big(\tq \otimes H^{(k)}\big)(t,0,s,y)\d y =   -\int_0^\infty H^{(k)}(t,0,s,y)\d y + \int_0^\infty \int_t^s\int_{-\infty}^\infty \partial_t\tq(t,0,u,z) H^{(k)}(u,z,s,y)\d z \d u\d y. 
\ee* 
Hence,
\be* 
&&\left|\int_0^\infty \partial_t\big(\tp \otimes G^{(k)}\big)(t,0,s,y)\d y - \int_0^\infty \partial_t\big(\tq \otimes H^{(k)}\big)(t,0,s,y)\d y\right| \\
&\le& \left|\int_0^\infty\big(G^{(k)}(t,0,s,y)- H^{(k)}(t,0,s,y)\big)\d y\right|\\
&& + \left|\int_0^\infty \int_t^s\int_{-\infty}^\infty \big(\partial_t\tp(t,0,u,z)-\partial_t\tq(t,0,u,z)\big) H^{(k)}(u,0,s,y)\d z \d u\d y\right| \\
&& + \left|\int_0^\infty \int_t^s\int_{-\infty}^\infty \big(\partial_t\tp(t,0,u,z)-\partial_t\tq(t,0,u,z)\big) \big( H^{(k)}(u,z,s,y)-H^{(k)}(u,0,s,y)\big)\d z \d u\d y\right| \\
&&+\left|\int_0^\infty \int_t^s\int_{-\infty}^\infty \partial_t\tq(t,0,u,z) \big(H^{(k)}(u,z,s,y)-G^{(k)}(u,z,s,y)\big)\d z \d u\d y \right| \\
&=:& I_{31} + I_{32} + I_{33} + I_{34}.
\ee* 
For $I_{31}$, one has by Lemma \ref{lem:menozzi}
\beq \label{ineqI31}
I_{31} \le k\Delta_s\frac{R^{k}\Gamma(1/2)^{k-1}}{\Gamma((1+k)/2)}(s-t)^{k/2-1} \int_0^\infty \phi(r(s-t),y) \d y \le k\Delta_s\frac{R^{k}\Gamma(1/2)^{k-1}}{\Gamma((1+k)/2)}(s-t)^{k/2-1}.
\eeq 
For $I_{32}$, one has
\beq\label{ineqI32}
I_{32} &=& \left|\int_0^\infty \int_t^s\int_{-\infty}^\infty \big(\partial_t\tp(t,0,u,z)-\partial_t\tq(t,0,u,z)\big) H^{(k)}(u,0,s,y)\d z \d u\d y\right|\nonumber \\
&=& \left|\int_0^\infty \int_t^sH^{(k)}(u,0,s,y) \d y \d u \int_{-\infty}^\infty \big(\partial_t\tp(t,0,u,z)-\partial_t\tq(t,0,u,z)\big) \d z \right|\nonumber \\
&\le& \left|\int_0^\infty \int_t^s \big| H^{(k)}(u,0,s,y)\big| \d y \d u \left|\int_{-\infty}^\infty \big(\partial_t\tp(t,0,u,z)-\partial_t\tq(t,0,u,z)\big) \d z\right| \right|\nonumber \\
&\le & R\Delta_s  \int_t^s   (u-t)^{-1/2}\d u \int_0^\infty\big| H^{(k)}(u,0,s,y)\big| \d y \nonumber\\
&\le & R\Delta_s  \int_t^s   (u-t)^{-1/2}  \frac{R^{k+1}\Gamma(1/2)^{k}}{\Gamma(1+k/2)}(s-u)^{k/2-1}
\d u\nonumber \\
&\le & \Delta_s    \frac{R^{k+2}\Gamma(1/2)^{k-1}}{\Gamma((1+k)/2)}(s-t)^{(k-1)/2}.
\eeq 
For $I_{33}$, we start with $k=1$. Then it follows that
\be* 
&&|H(t,x,s,y) - H(t,0,s,y)| = \left|\int_0^x \partial_x H(t,z,s,y)\d z\right| \\
&=&\left|\int_0^x\left[  \left(\frac{\sigma(t,z)\partial_x\sigma(t,z)}{\big(1+\beta(t)\big)^2} + B(t,x)\right)\partial^2_{xx} \tq(t,z,s,y) + \frac{\sigma(t,z)^2-\sigma(t,y)^2}{2\big(1+\beta(t)\big)^2}\partial^3_{xxx}\tq(t,z,s,y)\right] \d z\right| \\
&\le &R\int_0^x\phi(r(s-t),y-z)) \left(\frac{1}{s-t} + \frac{|y-z|}{(s-t)^{3/2}}\right) \d z.
\ee* 
Hence,
\be* 
\int_0^\infty |H(t,x,s,y) - H(t,0,s,y)|\d y \le  R\int_0^{\infty}\int_0^x\phi(r(s-t),y-z)) \left(\frac{1}{s-t} + \frac{|y-z|}{(s-t)^{3/2}}\right) \d z \d y \le  \frac{R|x|}{s-t}.
\ee* 
Fix any $\gamma\in (0,1)$, e.g. $\gamma=1/2$. We distinguish two cases.  If $|x|\le \sqrt{s-t}$, then
\be* 
\int_0^\infty |H(t,x,s,y) - H(t,0,s,y)|\d y 
\le \frac{R|x|}{s-t}\le \frac{R|x|^{\gamma}(s-t)^{(1-\gamma)/2}}{s-t}=  \frac{R|x|^{\gamma}}{(s-t)^{(1+\gamma)/2}}.
\ee* 
If $|x|>\sqrt{s-t}$, then
\be* 
|H(t,x,s,y)-H(t,0,s,y)| &\le&  \big[|H(t,x,s,y)|+|H(t,0,s,y)|\big] \frac{|x|^\gamma}{(s-t)^{\gamma/2}}\\
&\le& R\frac{\phi(r(s-t),y-x)+\phi(r(s-t),y)}{\sqrt{s-t}}\frac{|x|^\gamma}{(s-t)^{\gamma/2}}
\ee*
and thus
\be* 
\int_0^\infty |H(t,x,s,y) - H(t,0,s,y)|\d y 
\le \frac{R|x|^{\gamma}}{(s-t)^{(1+\gamma)/2}}.
\ee* 
We compute for all $k\ge 1$
\be* 
&&\int_0^\infty |H^{(k)}(t,x,s,y) - H^{(k)}(t,0,s,y)|\d y\\
&=& \int_0^\infty \left|\int_t^s \int_{-\infty}^\infty \big(H(t,x,u,z)-H(t,0,u,z)\big)H^{(k-1)}(u,z,s,y)\d z \d u\right|\d y \\
&\le &\int_t^s \d u \int_{-\infty}^\infty \big |H(t,x,u,z)-H(t,0,u,z)\big| \d z \int_0^\infty \big|H^{(k-1)}(u,z,s,y)\big| \d y \\
&\le &\int_t^s \d u \int_{-\infty}^\infty \big |H(t,x,u,z)-H(t,0,u,z)\big| \d z \int_0^\infty \frac{R^{k-1}\Gamma(1/2)^{k-2}}{\Gamma((k-1)/2)}\phi(r(s-u),y-z)(s-u)^{(k-1)/2-1} \d y \\
&\le &\int_t^s  \frac{|x|^{\gamma}}{(u-t)^{(1+\gamma)/2}}  \frac{R^{k}\Gamma(1/2)^{k-2}}{\Gamma((k-1)/2)}(s-u)^{(k-1)/2-1} \d u \\
&\le&  \frac{R^{k+1}\Gamma(1/2)^{k-2}|x|^{\gamma}}{\Gamma((k-1)/2)}(s-t)^{(k-\gamma)/2-1}.
\ee* 
Hence, 
\beq\label{ineqI33}
I_{33} & = &\left|\int_0^\infty \int_t^s\int_{-\infty}^\infty \big(\partial_t\tp(t,0,u,z)-\partial_t\tq(t,0,u,z)\big) \big( H^{(k)}(u,z,s,y)-H^{(k)}(u,0,s,y)\big)\d z \d u\d y\right| \nonumber\\
&\le &  \int_t^s\int_{-\infty}^\infty \big |\partial_t\tp(t,0,u,z)-\partial_t\tq(t,0,u,z)\big|\d z \d u \int_0^\infty \big |  H^{(k)}(u,z,s,y)-H^{(k)}(u,0,s,y)\big|\d y \nonumber \\
&\le& \frac{R^{k+1}\Gamma(1/2)^{k-2}}{\Gamma((k-1)/2)}(s-t)^{(k-\gamma)/2-1}\Delta_s \int_t^s  (s-u)^{(k-\gamma)/2-1} \d u\int_{-\infty}^\infty \big |z|^{\gamma}|\partial_t\tp(t,0,u,z)-\partial_t\tq(t,0,u,z)\big|\d z  \nonumber\\
&\le& \frac{R^{k+1}\Gamma(1/2)^{k-2}}{\Gamma((k-1)/2)}(s-t)^{(k-\gamma)/2-1} \Delta_s\int_t^s   \d u\int_{-\infty}^\infty  \frac{R|z|^{\gamma}}{(u-t)}\phi(r(u-t),z)\d z \nonumber \\
&\le& \frac{R^{k+2}\Gamma(1/2)^{k-2}}{\Gamma((k-1)/2)} (s-t)^{k/2-1}\Delta_s.
\eeq 
Finally, let us turn to $I_{34}$ whose estimation is almost the same of that of $I_{34}$. Write
\be* 
I_{34} &= &\left|\int_0^\infty \int_t^s\int_{-\infty}^\infty \partial_t\tq(t,0,u,z) \big(H^{(k)}(u,z,s,y)-G^{(k)}(u,z,s,y)\big)\d z \d u\d y \right| \\
&\le& \left|\int_0^\infty \int_t^s\int_{-\infty}^\infty \partial_t\tq(t,0,u,z) \big((H^{(k)}-G^{(k)})(u,z,s,y)-(H^{(k)}-G^{(k)})(u,0,s,y)\big)\d z \d u\d y \right| \\
&&+\left|\int_0^\infty \int_t^s\int_{-\infty}^\infty \partial_t\tq(t,0,u,z) (H^{(k)}-G^{(k)})(u,0,s,y)\d z \d u\d y \right|.
\ee* 
Repeating the above reasoning by replacing $H^{(k)}$ by $H^{(k)}-G^{(k)}$, one obtains
\be* 
&&\int_0^\infty |(H^{(k)}-G^{(k)})(t,x,s,y) - (H^{(k)}-G^{(k)})(t,0,s,y)|\d y
\le  \frac{R^{k+1}\Gamma(1/2)^{k-2}|x|^{\gamma}}{\Gamma((k-1)/2)}(s-t)^{(k-\gamma)/2-1}\Delta_s.
\ee* 
Therefore,
\beq\label{ineqI34}
I_{34} &\le &\frac{R^{k+2}\Gamma(1/2)^{k-2}}{\Gamma((k-1)/2)} (s-t)^{k/2-1}\Delta_s  +\left|\int_0^\infty \int_t^s\int_{-\infty}^\infty \partial_t\tq(t,0,u,z) (H^{(k)}-G^{(k)})(u,0,s,y)\d z \d u\d y \right|\nonumber \\
&=&\frac{R^{k+2}\Gamma(1/2)^{k-2}}{\Gamma((k-1)/2)} (s-t)^{k/2-1}\Delta_s  +\left|\int_0^\infty \d y  \int_t^s (H^{(k)}-G^{(k)})(u,0,s,y)\d u \int_{-\infty}^\infty \partial_t\tq(t,0,u,z)\d z  \right| \nonumber\\
&\le & \frac{R^{k+2}\Gamma(1/2)^{k-2}}{\Gamma((k-1)/2)} (s-t)^{k/2-1}\Delta_s  +\int_0^\infty \d y  \int_t^s |H^{(k)}-G^{(k)}|(u,0,s,y)\d u \left|\int_{-\infty}^\infty \partial_t\tq(t,0,u,z)\d z  \right| \nonumber\\
&\le & \frac{R^{k+2}\Gamma(1/2)^{k-2}}{\Gamma((k-1)/2)} (s-t)^{k/2-1}\Delta_s  +\int_0^\infty \d y  \int_t^s k\Delta_s \frac{R^k\Gamma(1/2)^{k-1}}{\Gamma((k+1)/2)}\phi(r(s-u),y)(s-u)^{k/2-1}\frac{R}{\sqrt{u-t}}\d u \nonumber \\
&\le & \frac{R^{k+2}\Gamma(1/2)^{k-2}}{\Gamma((k-1)/2)} (s-t)^{k/2-1}\Delta_s  +k \frac{R^{k+1}\Gamma(1/2)^{k-1}}{\Gamma((k+1)/2)}(s-t)^{(k-1)/2}\Delta_s. 
\eeq
Summing up \eqref{ineqI31}, \eqref{ineqI32}, \eqref{ineqI33} and \eqref{ineqI34}, one obtains $I_3(t) \le R\Delta_s(s-t)^{1/2}$. Adopting the above reasoning we obtain similarly $J_3(t) \le R\Delta_s(s-t)^{1/2}$. \qed
\end{proof}

    \section{Proof of Theorem \ref{thm:wasserstein-chaos}}

    In this section we prove the uniform-in-time quantitative propagation of chaos \ref{thm:wasserstein-chaos}:
    \begin{equation}\label{eq:uniform-sup-W2}
    \mathbb{E}\Big[\sup_{t\in[0,T]}\mathcal{W}_2(\mu_t^N,\mu_t)\Big]\le \frac{C}{\sqrt{N}}.
    \end{equation}
    The proof proceeds in two steps. First we derive a uniform-in-time estimate for the $\eta$–regularized system under a synchronous coupling. Then we remove the regularization by means of the boundary-layer comparison and convergence results established earlier.

    Throughout, $C>0$ denotes a finite constant depending only on $(T,\|b\|_\infty,\mathrm{Lip}(b),\rho)$, and allowed to change from line to line.

    \begin{Definition}[Regularized particle system and limit]\label{def:reg-sys}
    Fix $\eta\in(0,1]$ and $f_\eta\in C^2(\mathbb{R};[0,1])$ with $f_\eta=0$ on $(-\infty,0]$, $f_\eta=1$ on $[\eta,\infty)$, and $0\le f_\eta\le1$. On a common space with i.i.d. Brownian motions $(W^i)_{i\ge1}$ and i.i.d. $Z^i\sim\rho$:
    \[
    \begin{cases}
    \d X_t^{N,i,\eta}
    = f_\eta(X_t^{N,i,\eta})\,\dfrac1N\sum_{j=1}^N b(t,X_t^{N,i,\eta},X_t^{N,j,\eta})\,f_\eta(X_t^{N,j,\eta})\,\d t
    + \sqrt{2}\,f_\eta(X_t^{N,i,\eta})\,\d W_t^i,\quad X_0^{N,i,\eta}=Z^i,\\[0.6em]
    \d Y_t^{i,\eta}
    = f_\eta(Y_t^{i,\eta})\!\int b(t,Y_t^{i,\eta},y)\,f_\eta(y)\,\mu_t^\eta(\d y)\,\d t
    + \sqrt{2}\,f_\eta(Y_t^{i,\eta})\,\d W_t^i,\quad Y_0^{i,\eta}=Z^i,
    \end{cases}
    \]
    where $\mu_t^\eta=\mathcal{L}(Y_t^{1,\eta})$ and $\mu_t^{N,\eta}=\frac1N\sum_{i=1}^N\delta_{X_t^{N,i,\eta}}$.
    \end{Definition}
    
    \begin{Definition}[Boundary-layer functions]\label{def:BL}
    For $\eta\in(0,1]$ and $t\in[0,T]$, define
    \[
    \mathrm{BL}_\eta(t)
    :=\mathbb{P}\big[0<X_t^{N,1,\eta}<\eta\big]+\mathbb{P}\big[0<Y_t^{1,\eta}<\eta\big],
    \]
    where $X^{N,1,\eta}$ is the $\eta$-regularized particle and $Y^{1,\eta}$ the $\eta$-regularized McKean–Vlasov limit from Definition~\ref{def:reg-sys}. We also set
    \[
    \mathrm{BL}_\eta^{N}(t):=\mathbb{P}\big[0<X_t^{N,1}<\eta\big]+\mathbb{P}\big[0<X_t^{N,1,\eta}<\eta\big],
    \qquad
    \mathrm{BL}_\eta^{\infty}(t):=\mathbb{P}\big[0<X_t<\eta\big]+\mathbb{P}\big[0<X_t^\eta<\eta\big].
    \]
    \end{Definition}

    \begin{Proposition}[Uniform-in-time propagation of chaos]\label{prop:uniform-poc}
    Assume $b$ is bounded and globally Lipschitz in space, and $\rho\in\mathcal{P}_2(\mathbb{R}_+)$. Let $\mu_t^N=\frac1N\sum_{i=1}^N\delta_{X_t^{N,i}}$ and $\mu_t=\mathcal{L}(X_t)$ denote respectively the empirical law of the unregularized particle system and the law of the McKean–Vlasov limit. Then there exists $C=C(T,\|b\|_\infty,\mathrm{Lip}(b),\rho)$ such that
    \[
    \mathbb{E}\Big[\sup_{t\in[0,T]}\mathcal{W}_2(\mu_t^N,\mu_t)\Big]\le \frac{C}{\sqrt{N}}.
    \]
    \end{Proposition}
    \begin{proof}
    We compare the regularized and unregularized systems using Definition~\ref{def:BL} and Definition~\ref{def:reg-sys}. 
    By Proposition~\ref{prop:uniform-sup-reg}, we first have
    \begin{equation}\label{eq:uniform-sup-regularized}
        \E\Big[\sup_{0\le t\le T} \mathcal{W}_2^2\big(\mu_t^{N,\eta},\mu_t^\eta\big)\Big]
        \le C\Big(\frac{1}{N}+\int_0^T \mathrm{BL}_\eta(s)\,\d s\Big).
    \end{equation}
    
    By the uniform-in-time boundary-layer comparison (Proposition~\ref{prop:boundary-comparison-uniform}),
    \begin{equation}\label{eq:supW2-left-right}
    \mathbb{E}\Big[\sup_{t\le T}\mathcal{W}_2^2(\mu_t^N,\mu_t^{N,\eta})\Big]\le C\int_0^T \mathrm{BL}_\eta^{N}(s)\,\d s,\qquad
    \mathbb{E}\Big[\sup_{t\le T}\mathcal{W}_2^2(\mu_t^\eta,\mu_t)\Big]\le C\int_0^T \mathrm{BL}_\eta^{\infty}(s)\,\d s.
    \end{equation}
    Combining \eqref{eq:uniform-sup-regularized}–\eqref{eq:supW2-left-right} with the triangle and Jensen inequalities gives
    \begin{equation}\label{eq:full-triangle}
    \mathbb{E}\Big[\sup_{t\le T}\mathcal{W}_2^2(\mu_t^N,\mu_t)\Big]
    \le C\Big(\frac{1}{N}
    +\int_0^T \mathrm{BL}_\eta(s)\,\d s
    +\int_0^T \mathrm{BL}_\eta^{N}(s)\,\d s
    +\int_0^T \mathrm{BL}_\eta^{\infty}(s)\,\d s\Big).
    \end{equation}
    Finally, by Proposition~\ref{prop:boundary-layer-conv},
    \[
    \int_0^T \mathrm{BL}_\eta(s)\,\d s \to 0,\qquad
    \int_0^T \mathrm{BL}_\eta^{N}(s)\,\d s \to 0,\qquad
    \int_0^T \mathrm{BL}_\eta^{\infty}(s)\,\d s \to 0\quad\text{as }\eta\to 0,
    \]
    uniformly in $N$ for the particle-layer term. Sending $\eta\to0$ in \eqref{eq:full-triangle} yields
    \[
    \mathbb{E}\Big[\sup_{t\le T}\mathcal{W}_2^2(\mu_t^N,\mu_t)\Big]\le \frac{C}{N},
    \]
    and the $\mathcal{W}_2$ bound follows by Jensen. \qed
    \end{proof}

    \begin{Proposition}[Uniform-in-time propagation error for the $\eta$–regularized system]
    Let $T>0$, $\eta\in(0,1]$, and let $(X_t^{N,i,\eta})_{1\le i\le N}$ and $(Y_t^{i,\eta})_{1\le i\le N}$ be the processes from Definition~\ref{def:reg-sys}, with $\mu_t^{N,\eta}= \frac1N\sum_{i=1}^N\delta_{X_t^{N,i,\eta}}$ and $\mu_t^\eta=\Lc(Y_t^{1,\eta})$. Let $\mathrm{BL}_\eta(t)$ be as in Definition~\ref{def:BL}. Then there exists $C=C(T,\|b\|_\infty,\mathrm{Lip}(b),\rho)$, independent of $N$ and $\eta$, such that
    \[
    \E\Big[\sup_{0\le t\le T} \mathcal{W}_2^2\big(\mu_t^{N,\eta},\mu_t^\eta\big)\Big]
    \le C\Big(\frac1N+\int_0^T \mathrm{BL}_\eta(s)\,\d s\Big).
    \]
    \label{prop:uniform-sup-reg}
    \end{Proposition}
    
    \begin{proof}

    Fix $\eta\in(0,1]$ and a function $f_\eta\in C^2(\mathbb{R};[0,1])$ with $f_\eta=0$ on $(-\infty,0]$, $f_\eta=1$ on $[\eta,\infty)$, and $0\le f_\eta\le 1$. On a common filtered probability space supporting i.i.d. Brownian motions $(W^i)_{i\ge1}$ and i.i.d. initial data $(Z^i)_{i\ge1}$ with law $\rho$, consider the particle system and its McKean–Vlasov limit:
    \[
    \begin{cases}
    \d X_t^{N,i,\eta}
    = f_\eta(X_t^{N,i,\eta})\,\dfrac1N\sum_{j=1}^N b(t,X_t^{N,i,\eta},X_t^{N,j,\eta})\,f_\eta(X_t^{N,j,\eta})\,\d t
    + \sqrt{2}\,f_\eta(X_t^{N,i,\eta})\,\d W_t^i,\quad X_0^{N,i,\eta}=Z^i,\\[0.6em]
    \d Y_t^{i,\eta}
    = f_\eta(Y_t^{i,\eta})\!\int b(t,Y_t^{i,\eta},y)\,f_\eta(y)\,\mu_t^\eta(\d y)\,\d t
    + \sqrt{2}\,f_\eta(Y_t^{i,\eta})\,\d W_t^i,\quad Y_0^{i,\eta}=Z^i,
    \end{cases}
    \]
    where $\mu_t^\eta=\mathcal{L}(Y_t^{1,\eta})$, and $b$ satisfies Assumption~\ref{ass:1} with $b$ bounded.

    Define the synchronous error processes
    \[
    \Delta_t^{i,\eta}:=X_t^{N,i,\eta}-Y_t^{i,\eta},\qquad i=1,\dots,N.
    \]
    Set
    \[
    \mathsf{B}_t^{i,N,\eta}:=\frac1N\sum_{j=1}^N b(t,X_t^{N,i,\eta},X_t^{N,j,\eta})\,f_\eta(X_t^{N,j,\eta}),
    \qquad
    \mathsf{F}_t^\eta(x):=\int b(t,x,y)\,f_\eta(y)\,\mu_t^\eta(\d y).
    \]
    Then, by construction,
    \begin{equation}\label{eq:SDE-Delta}
    \d \Delta_t^{i,\eta}
    =\Big(f_\eta(X_t^{N,i,\eta})\,\mathsf{B}_t^{i,N,\eta}-f_\eta(Y_t^{i,\eta})\,\mathsf{F}_t^\eta(Y_t^{i,\eta})\Big)\d t
    + \sqrt{2}\,\Big(f_\eta(X_t^{N,i,\eta})-f_\eta(Y_t^{i,\eta})\Big)\,\d W_t^i,\quad \Delta_0^{i,\eta}=0.
    \end{equation}

    We will obtain a uniform-in-time estimate on $\mathbb{E}\big[\sup_{t\le T}|\Delta_t^{i,\eta}|^2\big]$ by applying Itô’s formula to $|\Delta_t^{i,\eta}|^2$ and using Burkholder–Davis–Gundy (BDG). Precisely, define
    \[
    D_{1,t}^{i,\eta}:=\big(f_\eta(X_t^{N,i,\eta})-f_\eta(Y_t^{i,\eta})\big)\,\mathsf{B}_t^{i,N,\eta},\qquad
    D_{2,t}^{i,\eta}:=f_\eta(Y_t^{i,\eta})\big(\mathsf{B}_t^{i,N,\eta}-\mathsf{F}_t^\eta(Y_t^{i,\eta})\big),
    \]
    so that the drift in \eqref{eq:SDE-Delta} equals $D_{1,t}^{i,\eta}+D_{2,t}^{i,\eta}$. Also set the martingale
    \[
    M_t^{i,\eta}:=2\sqrt{2}\int_0^t \Delta_s^{i,\eta}\,\big(f_\eta(X_s^{N,i,\eta})-f_\eta(Y_s^{i,\eta})\big)\,\d W_s^i.
    \]
    By Itô’s formula,
    \begin{equation}\label{eq:ito-Delta-sup}
    |\Delta_t^{i,\eta}|^2=\int_0^t \Big(2\,\Delta_s^{i,\eta}(D_{1,s}^{i,\eta}+D_{2,s}^{i,\eta})+2\,|f_\eta(X_s^{N,i,\eta})-f_\eta(Y_s^{i,\eta})|^2\Big)\,\d s + M_t^{i,\eta}.
    \end{equation}
    Taking $\sup_{t\le u}$ and expectations,
    \begin{equation}\label{eq:sup-Delta-master}
    \mathbb{E}\Big[\sup_{t\le u}|\Delta_t^{i,\eta}|^2\Big]
    \le 2\int_0^u \mathbb{E}\big[|\Delta_s^{i,\eta}||D_{1,s}^{i,\eta}|\big]\d s
    +2\int_0^u \mathbb{E}\big[|\Delta_s^{i,\eta}||D_{2,s}^{i,\eta}|\big]\d s
    +2\int_0^u \mathbb{E}\big[|f_\eta(X_s^{N,i,\eta})-f_\eta(Y_s^{i,\eta})|^2\big]\d s
    +\mathbb{E}\Big[\sup_{t\le u}|M_t^{i,\eta}|\Big].
    \end{equation}

    We now bound each term on the right-hand side.

    \paragraph{Diffusion difference.}
    Since $0\le f_\eta\le 1$ and $f_\eta$ is constant outside $(0,\eta)$,
    \[
    |f_\eta(X)-f_\eta(Y)|^2\le \1_{\{0<X<\eta\}}+\1_{\{0<Y<\eta\}}.
    \]
    Therefore, defining the boundary-layer functional
    \[
    \mathrm{BL}_\eta^{(i)}(s):=\mathbb{P}\big[0<X_s^{N,i,\eta}<\eta\big]+\mathbb{P}\big[0<Y_s^{i,\eta}<\eta\big],
    \]
    we have
    \begin{equation}\label{eq:BL-diffusion}
    \int_0^u \mathbb{E}\big[|f_\eta(X_s^{N,i,\eta})-f_\eta(Y_s^{i,\eta})|^2\big]\d s
    \le \int_0^u \mathrm{BL}_\eta^{(i)}(s)\,\d s.
    \end{equation}

    \paragraph{Drift term $D_{1}$.}
    By boundedness of $b$ and $|f_\eta|\le 1$,
    \[
    |D_{1,s}^{i,\eta}|\le \|b\|_\infty\big(\1_{\{0<X_s^{N,i,\eta}<\eta\}}+\1_{\{0<Y_s^{i,\eta}<\eta\}}\big).
    \]
    Using Young’s inequality with a parameter $\varepsilon\in(0,1]$,
    \begin{equation}\label{eq:D1-bound}
    2\,\mathbb{E}\big[|\Delta_s^{i,\eta}||D_{1,s}^{i,\eta}|\big]
    \le \varepsilon\,\mathbb{E}\big[|\Delta_s^{i,\eta}|^2\big]+\frac{C}{\varepsilon}\,\mathrm{BL}_\eta^{(i)}(s).
    \end{equation}

    \paragraph{Drift term $D_{2}$.}
    We decompose
    \[
    \mathsf{B}_s^{i,N,\eta}-\mathsf{F}_s^\eta(Y_s^{i,\eta})
    = T_{1,s}^{i,\eta}+T_{2,s}^{i,\eta}+T_{3,s}^{i,\eta}+T_{4,s}^{i,\eta},
    \]
    where, introducing i.i.d. copies $(Y_s^{j,\eta})_{j\le N}$ of $Y_s^{1,\eta}$ independent of everything else,
    \[
    \begin{aligned}
    T_{1,s}^{i,\eta}&:=\frac{1}{N}\sum_{j=1}^N \Big(b(t,X_s^{N,i,\eta},X_s^{N,j,\eta})-b(t,Y_s^{i,\eta},X_s^{N,j,\eta})\Big)\,f_\eta(X_s^{N,j,\eta}),\\
    T_{2,s}^{i,\eta}&:=\frac{1}{N}\sum_{j=1}^N \Big(b(t,Y_s^{i,\eta},X_s^{N,j,\eta})-b(t,Y_s^{i,\eta},Y_s^{j,\eta})\Big)\,f_\eta(X_s^{N,j,\eta}),\\
    T_{3,s}^{i,\eta}&:=\frac{1}{N}\sum_{j=1}^N b(t,Y_s^{i,\eta},Y_s^{j,\eta})\Big(f_\eta(X_s^{N,j,\eta})-f_\eta(Y_s^{j,\eta})\Big),\\
    T_{4,s}^{i,\eta}&:=\frac{1}{N}\sum_{j=1}^N b(t,Y_s^{i,\eta},Y_s^{j,\eta})\,f_\eta(Y_s^{j,\eta})
    -\mathbb{E}\!\left[b(t,Y_s^{i,\eta},Y_s^{1,\eta})\,f_\eta(Y_s^{1,\eta})\,\Big|\,Y_s^{i,\eta}\right].
    \end{aligned}
    \]
    Using Lipschitz continuity of $b$ and $|f_\eta|\le 1$,
    \[
    |T_{1,s}^{i,\eta}|\le L\,|\Delta_s^{i,\eta}|,
    \qquad
    |T_{2,s}^{i,\eta}|\le L\Big(\frac1N\sum_{j=1}^N |\Delta_s^{j,\eta}|^2\Big)^{1/2},
    \qquad
    |T_{3,s}^{i,\eta}|\le \|b\|_\infty\,\frac1N\sum_{j=1}^N\big(\1_{\{0<X_s^{N,j,\eta}<\eta\}}+\1_{\{0<Y_s^{j,\eta}<\eta\}}\big).
    \]
    For $T_{4,s}^{i,\eta}$, conditionally on $Y_s^{i,\eta}$, the random variables
    \[
    Z_j:=b(t,Y_s^{i,\eta},Y_s^{j,\eta})\,f_\eta(Y_s^{j,\eta}),\quad j\neq i,
    \]
    are i.i.d., uniformly bounded by $\|b\|_\infty$, with mean $m:=\mathbb{E}[Z_1\,|\,Y_s^{i,\eta}]$. A standard variance computation yields
    \[
    \mathbb{E}\big[|T_{4,s}^{i,\eta}|^2\big]\le \frac{C}{N}.
    \]
    Therefore, for any $\varepsilon\in(0,1]$, by Young’s inequality and exchangeability,
    \begin{equation}\label{eq:D2-bound}
    \begin{aligned}
    2\,\mathbb{E}\big[|\Delta_s^{i,\eta}||D_{2,s}^{i,\eta}|\big]
    &\le \varepsilon\,\mathbb{E}\big[|\Delta_s^{i,\eta}|^2\big]
    +\frac{C}{\varepsilon}\,\mathbb{E}\big[|T_{1,s}^{i,\eta}|^2+|T_{2,s}^{i,\eta}|^2+|T_{3,s}^{i,\eta}|^2+|T_{4,s}^{i,\eta}|^2\big]\\
    &\le \varepsilon\,\mathbb{E}\big[|\Delta_s^{i,\eta}|^2\big]
    +\frac{C}{\varepsilon}\Big(\mathbb{E}\big[|\Delta_s^{i,\eta}|^2\big]+\frac1N\sum_{j=1}^N \mathbb{E}\big[|\Delta_s^{j,\eta}|^2\big]+\mathrm{BL}_\eta^{(i)}(s)+\frac{1}{N}\Big).
    \end{aligned}
    \end{equation}

    \paragraph{Martingale term.}
    By BDG, for some universal $c>0$,
    \[
    \mathbb{E}\Big[\sup_{t\le u}|M_t^{i,\eta}|\Big]
    \le c\,\mathbb{E}\Big[\big\langle M^{i,\eta}\big\rangle_u^{1/2}\Big]
    = c\,\mathbb{E}\Bigg[\Big(8\int_0^u |\Delta_s^{i,\eta}|^2\,|f_\eta(X_s^{N,i,\eta})-f_\eta(Y_s^{i,\eta})|^2\,\d s\Big)^{1/2}\Bigg].
    \]
    Applying Young’s inequality $ab\le \frac{\alpha}{2}a^2+\frac{1}{2\alpha}b^2$ with any $\alpha\in(0,1]$, and using \eqref{eq:BL-diffusion},
    \begin{equation}\label{eq:BDG-bound}
    \mathbb{E}\Big[\sup_{t\le u}|M_t^{i,\eta}|\Big]
    \le \frac{1}{2}\mathbb{E}\Big[\sup_{t\le u}|\Delta_t^{i,\eta}|^2\Big]
    + C\int_0^u \mathbb{E}\big[|f_\eta(X_s^{N,i,\eta})-f_\eta(Y_s^{i,\eta})|^2\big]\d s
    \le \frac{1}{2}\mathbb{E}\Big[\sup_{t\le u}|\Delta_t^{i,\eta}|^2\Big]
    + C\int_0^u \mathrm{BL}_\eta^{(i)}(s)\,\d s.
    \end{equation}

    \paragraph{Closing the inequality.}
    Combining \eqref{eq:sup-Delta-master}, \eqref{eq:BL-diffusion}, \eqref{eq:D1-bound}, \eqref{eq:D2-bound}, \eqref{eq:BDG-bound} and choosing $\varepsilon>0$ small enough so that all terms proportional to $\mathbb{E}\big[\sup_{t\le u}|\Delta_t^{i,\eta}|^2\big]$ on the right can be absorbed on the left, we obtain
    \[
    \mathbb{E}\Big[\sup_{t\le u}|\Delta_t^{i,\eta}|^2\Big]
    \le C\int_0^u \Big(\mathbb{E}\big[|\Delta_s^{i,\eta}|^2\big]+\frac1N\sum_{j=1}^N \mathbb{E}\big[|\Delta_s^{j,\eta}|^2\big]\Big)\d s
    + C\int_0^u \mathrm{BL}_\eta^{(i)}(s)\,\d s + \frac{C\,u}{N}.
    \]
    By $\mathbb{E}[|\Delta_s^{i,\eta}|^2]\le \mathbb{E}[\sup_{r\le s}|\Delta_r^{i,\eta}|^2]$ and exchangeability,
    \[
    \mathbb{E}\Big[\sup_{t\le u}|\Delta_t^{i,\eta}|^2\Big]
    \le C\int_0^u \Big(\frac1N\sum_{j=1}^N \mathbb{E}\big[\sup_{r\le s}|\Delta_r^{j,\eta}|^2\big]\Big)\d s
    + C\int_0^u \mathrm{BL}_\eta(s)\,\d s + \frac{C\,u}{N},
    \]
    where $\mathrm{BL}_\eta(s):=\mathbb{P}[0<X_s^{N,1,\eta}<\eta] + \mathbb{P}[0<Y_s^{1,\eta}<\eta]$. Define the averaged quantity
    \[
    \bar A_\eta(u):= \frac1N\sum_{j=1}^N \mathbb{E}\Big[\sup_{t\le u}|\Delta_t^{j,\eta}|^2\Big].
    \]
    Then
    \[
    \bar A_\eta(u)\le C\int_0^u \bar A_\eta(s)\,\d s
    + C\int_0^u \mathrm{BL}_\eta(s)\,\d s + \frac{C\,u}{N}.
    \]
    By Grönwall,
    \begin{equation}\label{eq:barAeta-final}
    \bar A_\eta(T)\le C\Big(\frac{1}{N}+\int_0^T \mathrm{BL}_\eta(s)\,\d s\Big).
    \end{equation}

    \paragraph{From particlewise error to Wasserstein distance.}
    For each fixed $t$, the coupling $(X_t^{N,i,\eta},Y_t^{i,\eta})_{i\le N}$ yields
    \[
    \mathcal{W}_2^2(\mu_t^{N,\eta},\mu_t^\eta)\le \frac1N\sum_{i=1}^N |X_t^{N,i,\eta}-Y_t^{i,\eta}|^2.
    \]
    Taking $\sup_{t\le T}$ and expectations, and using \eqref{eq:barAeta-final},
    \begin{equation}\label{eq:supW2-reg}
    \mathbb{E}\Big[\sup_{t\le T}\mathcal{W}_2^2(\mu_t^{N,\eta},\mu_t^\eta)\Big]
    \le \bar A_\eta(T)\le C\Big(\frac{1}{N}+\int_0^T \mathrm{BL}_\eta(s)\,\d s\Big).
    \end{equation}

    \qed
    \end{proof}
    
    \begin{Proposition}[Uniform-in-time boundary-layer comparison]\label{prop:boundary-comparison-uniform}
    Assume $b$ is bounded and globally Lipschitz in space as in Assumption~\ref{ass:1}, and let $f_\eta\in[0,1]$ satisfy $f_\eta=0$ on $(-\infty,0]$ and $f_\eta=1$ on $[\eta,\infty)$. Couple on the same space the pairs
    \[
    \big(X^{N,1},X^{N,1,\eta}\big),\qquad \big(X,X^\eta\big),
    \]
    with common initial data and Brownian motions. Define
    \[
    \mathrm{BL}_\eta^{N}(t):=\mathbb{P}\big(0<X_t^{N,1}<\eta\big)+\mathbb{P}\big(0<X_t^{N,1,\eta}<\eta\big),\quad
    \mathrm{BL}_\eta^{\infty}(t):=\mathbb{P}\big(0<X_t<\eta\big)+\mathbb{P}\big(0<X_t^\eta<\eta\big).
    \]
    Then, for every $T>0$, there exists $C=C\big(T,\|b\|_\infty,{\rm Lip}(b)\big)$ such that
    \begin{equation}\label{eq:particle-eta-bound}
    \mathbb{E}\Big[\sup_{t\in[0,T]}\big|X_t^{N,1}-X_t^{N,1,\eta}\big|^2\Big]\le C\int_0^T \mathrm{BL}_\eta^{N}(s)\,\mathrm{d}s
    \end{equation}
    \begin{equation}\label{eq:limit-eta-bound}
    \mathbb{E}\Big[\sup_{t\in[0,T]}\big|X_t-X_t^\eta\big|^2\Big]\le C\int_0^T \mathrm{BL}_\eta^{\infty}(s)\,\mathrm{d}s
    \end{equation}
    \end{Proposition}

    \begin{proof}
    Write $\chi(x):=\mathbf{1}_{\{x>0\}}$ and note $|\chi-f_\eta|\le \mathbf{1}_{\{0<\cdot<\eta\}}$. Set
    \[
    \Delta_t^{N}:=X_t^{N,1}-X_t^{N,1,\eta},\qquad \Delta_t:=X_t-X_t^\eta.
    \]

    For the particles system, let $x_t^i:=X_t^{N,i}$, $y_t^i:=X_t^{N,i,\eta}$, $\chi_i(t):=\chi(x_t^i)$, $f_i(t):=f_\eta(y_t^i)$ and
    \[
    S_t^{i}:=\frac{1}{N}\sum_{j=1}^N b\!\big(t,x_t^{i},x_t^{j}\big)\,\chi_j(t),\qquad
    T_t^{i}:=\frac{1}{N}\sum_{j=1}^N b\!\big(t,y_t^{i},y_t^{j}\big)\,f_j(t).
    \]
    By synchronous coupling for $i=1$,
    \[
    \d\Delta_t^{N}=\big(\chi_1 S_t^{1}-f_1 T_t^{1}\big)\,\d t+\sqrt{2}\,\big(\chi_1-f_1\big)\,\d W_t
    =:A_t^{N}\,\d t+\sqrt{2}\,B_t^{N}\,\d W_t,\quad \Delta_0^{N}=0.
    \]
    Itô on $|\Delta_t^{N}|^2$ yields
    \[
    |\Delta_t^{N}|^2=\int_0^t \Big(2\,\Delta_s^{N}A_s^{N}+2\,|B_s^{N}|^2\Big)\,\d s+M_t^{N},\qquad
    M_t^{N}:=2\sqrt{2}\int_0^t \Delta_s^{N}B_s^{N}\,\d W_s.
    \]
    Taking $\sup_{t\le u}$ and expectations, by BDG and $|B_s^{N}|^2\le \mathrm{BL}_\eta^{N}(s)$,
    \[
    \mathbb{E}\Big[\sup_{t\le u}|\Delta_t^{N}|^2\Big]
    \le 2\int_0^u \mathbb{E}\big[|\Delta_s^{N}||A_s^{N}|\big]\d s
    +2\int_0^u \mathrm{BL}_\eta^{N}(s)\,\d s
    +\frac{1}{2}\mathbb{E}\Big[\sup_{t\le u}|\Delta_t^{N}|^2\Big]+C\int_0^u \mathrm{BL}_\eta^{N}(s)\,\d s.
    \]
    Absorb the martingale term and split $A_t^{N}$ as
    \[
    \begin{aligned}
    A_t^{N}={}&(\chi_1-f_1)\,\frac{1}{N}\sum_{j} b(t,x_t^{1},x_t^{j})\,\chi_j
    + f_1\,\frac{1}{N}\sum_{j}\big(b(t,x_t^{1},x_t^{j})-b(t,y_t^{1},x_t^{j})\big)\,\chi_j\\
    &+ f_1\,\frac{1}{N}\sum_{j} b(t,y_t^{1},x_t^{j})\,(\chi_j-f_j)
    + f_1\,\frac{1}{N}\sum_{j}\big(b(t,y_t^{1},x_t^{j})-b(t,y_t^{1},y_t^{j})\big)\,f_j.
    \end{aligned}
    \]
    Using Young’s inequality, boundedness/Lipschitz of $b$, and exchangeability,
    \[
    \mathbb{E}\big[|\Delta_s^{N}||A_s^{N}|\big]
    \le \varepsilon\,\mathbb{E}\big[\sup_{r\le s}|\Delta_r^{N}|^2\big]
    +\frac{C}{\varepsilon}\Big(\mathrm{BL}_\eta^{N}(s)+\frac{1}{N}\sum_{j=1}^N \mathbb{E}\big[\sup_{r\le s}|\Delta_r^{N,j}|^2\big]\Big).
    \]
    Define $\bar A(u):=\frac{1}{N}\sum_{j=1}^N \mathbb{E}\big[\sup_{t\le u}|\Delta_t^{N,j}|^2\big]$. Choosing $\varepsilon>0$ small and averaging over particles,
    \[
    \bar A(u)\le C\int_0^u \bar A(s)\,\d s + C\int_0^u \mathrm{BL}_\eta^{N}(s)\,\d s.
    \]
    By Grönwall,
    \[
    \bar A(T)\le C\int_0^T \mathrm{BL}_\eta^{N}(s)\,\d s,
    \]
    hence, by exchangeability,
    \[
    \mathbb{E}\Big[\sup_{t\in[0,T]}|\Delta_t^{N}|^2\Big]\le C\int_0^T \mathrm{BL}_\eta^{N}(s)\,\d s.
    \]
    which is \eqref{eq:particle-eta-bound}.

    For the limit system, let
    \[
    \d X_t=\chi(X_t)\,F(t,X_t)\,\d t+\sqrt{2}\,\chi(X_t)\,\d W_t,\quad
    \d X_t^\eta=f_\eta(X_t^\eta)\,F^\eta(t,X_t^\eta)\,\d t+\sqrt{2}\,f_\eta(X_t^\eta)\,\d W_t,
    \]
    with $F(t,x):=\int b(t,x,y)\chi(y)\mu_t(\d y)$ and $F^\eta(t,x):=\int b(t,x,y)f_\eta(y)\mu_t^\eta(\d y)$. For $\Delta_t:=X_t-X_t^\eta$,
    \[
    \d\Delta_t=A_t\,\d t+\sqrt{2}\,B_t\,\d W_t,\quad
    A_t:=\chi(X_t)F(t,X_t)-f_\eta(X_t^\eta)F^\eta(t,X_t^\eta),\ \ B_t:=\chi(X_t)-f_\eta(X_t^\eta).
    \]

From $\,\d\Delta_t=A_t\,\d t+\sqrt{2}\,B_t\,\d W_t\,$ with $\,B_t=\chi(X_t)-f_\eta(X_t^\eta)\,$, Itô’s formula gives
\[
|\Delta_t|^2
=\int_0^t \Big(2\,\Delta_s A_s + 2\,|B_s|^2\Big)\,\d s
+ 2\sqrt{2}\int_0^t \Delta_s B_s\,\d W_s.
\]
Taking $\sup_{t\le u}$, expectations, and applying BDG plus Young’s inequality,
\[
\mathbb{E}\Big[\sup_{t\le u}|\Delta_t|^2\Big]
\le 2\int_0^u \mathbb{E}\big[|\Delta_s||A_s|\big]\d s
+ 2\int_0^u \mathbb{E}\big[|B_s|^2\big]\d s
+ \tfrac12\,\mathbb{E}\Big[\sup_{t\le u}|\Delta_t|^2\Big]
+ C\int_0^u \mathbb{E}\big[|B_s|^2\big]\d s.
\]
Absorbing the martingale term and using $|B_s|^2\le \1_{\{0<X_s<\eta\}}+\1_{\{0<X_s^\eta<\eta\}}$,
\[
\mathbb{E}\Big[\sup_{t\le u}|\Delta_t|^2\Big]
\le C\int_0^u \mathbb{E}\big[|\Delta_s||A_s|\big]\d s
+ C\int_0^u \mathrm{BL}_\eta^{\infty}(s)\,\d s.
\]

We now bound $\mathbb{E}[|\Delta_s||A_s|]$ after splitting
\[
A_s
= A_s^{(1)}+A_s^{(2)}+A_s^{(3)}
:= (\chi(X_s)-f_\eta(X_s^\eta))\,F(s,X_s)
+ f_\eta(X_s^\eta)\big(F(s,X_s)-F(s,X_s^\eta)\big)
+ f_\eta(X_s^\eta)\big(F(s,X_s^\eta)-F^\eta(s,X_s^\eta)\big),
\]
where $F(s,x):=\int b(s,x,y)\chi(y)\mu_s(\d y)$ and $F^\eta(s,x):=\int b(s,x,y)f_\eta(y)\mu_s^\eta(\d y)$.

Since $|F|\le \|b\|_\infty$ and $|\chi-f_\eta|\le \1_{\{0<X_s<\eta\}}+\1_{\{0<X_s^\eta<\eta\}}$,
\[
\mathbb{E}\big[|\Delta_s||A_s^{(1)}|\big]
\le C\,\mathbb{E}\Big[|\Delta_s|\big(\1_{\{0<X_s<\eta\}}+\1_{\{0<X_s^\eta<\eta\}}\big)\Big]
\le \varepsilon\,\mathbb{E}\big[\sup_{r\le s}|\Delta_r|^2\big]
+\frac{C}{\varepsilon}\,\mathrm{BL}_\eta^{\infty}(s),
\]
by Cauchy–Schwarz and Young.

By Lipschitz continuity of $b$ in $x$, $F(s,\cdot)$ is $L$–Lipschitz, and $|f_\eta|\le1$:
\[
\mathbb{E}\big[|\Delta_s||A_s^{(2)}|\big]
\le L\,\mathbb{E}\big[|\Delta_s|^2\big]
\le \varepsilon\,\mathbb{E}\big[\sup_{r\le s}|\Delta_r|^2\big]+C_\varepsilon\,0.
\]

For any $x\in\mathbb{R}$, write an independent coupling $(Y_s,Y_s^\eta)$ of $(X_s,X_s^\eta)$ and add–subtract:
\[
\begin{aligned}
|F(s,x)-F^\eta(s,x)|
&= \big|\mathbb{E}\big[b(s,x,Y_s)\chi(Y_s)-b(s,x,Y_s^\eta)f_\eta(Y_s^\eta)\big]\big|\\
&\le \mathbb{E}\big[|b(s,x,Y_s)-b(s,x,Y_s^\eta)|\big]
+\|b\|_\infty\,\mathbb{E}\big[|\chi(Y_s)-f_\eta(Y_s)|\big]
+\|b\|_\infty\,\mathbb{E}\big[|f_\eta(Y_s)-f_\eta(Y_s^\eta)|\big]\\
&\le L\,\mathbb{E}|Y_s-Y_s^\eta|+C\,\mathrm{BL}_\eta^{\infty}(s)
\le L\Big(\mathbb{E}\big[\sup_{r\le s}|\Delta_r|^2\big]\Big)^{1/2}+C\,\mathrm{BL}_\eta^{\infty}(s),
\end{aligned}
\]
using boundedness/Lipschitz of $b$, $0\le f_\eta\le1$, and the fact that $f_\eta$ is constant off $(0,\eta)$ so that
$\mathbb{E}|f_\eta(Y_s)-f_\eta(Y_s^\eta)|\le \mathrm{BL}_\eta^{\infty}(s)$. Hence, with $|f_\eta|\le1$,
\[
\begin{aligned}
\mathbb{E}\big[|\Delta_s||A_s^{(3)}|\big]
&\le \mathbb{E}\Big[|\Delta_s|\big(L\,\big(\mathbb{E}\sup_{r\le s}|\Delta_r|^2\big)^{1/2}
+ C\,\mathrm{BL}_\eta^{\infty}(s)\big)\Big]\\
&\le \varepsilon\,\mathbb{E}\big[\sup_{r\le s}|\Delta_r|^2\big]
+\frac{C}{\varepsilon}\,\mathrm{BL}_\eta^{\infty}(s).
\end{aligned}
\]

Collecting the three bounds,
\[
\mathbb{E}\big[|\Delta_s||A_s|\big]
\le \varepsilon\,\mathbb{E}\big[\sup_{r\le s}|\Delta_r|^2\big]
+\frac{C}{\varepsilon}\,\mathrm{BL}_\eta^{\infty}(s).
\]
Plugging this into the previous display and choosing $\varepsilon>0$ small (to absorb the sup term) yields
\[
\mathbb{E}\Big[\sup_{t\le u}|\Delta_t|^2\Big]
\le C\int_0^u \mathbb{E}\Big[\sup_{r\le s}|\Delta_r|^2\Big]\d s
+ C\int_0^u \mathrm{BL}_\eta^{\infty}(s)\,\d s,
\]
and Grönwall finishes the estimate.
    
    Choosing $\varepsilon$ small and applying Grönwall yields equation \eqref{eq:limit-eta-bound}:
    \[
    \mathbb{E}\Big[\sup_{t\in[0,T]}|\Delta_t|^2\Big]\le C\int_0^T \mathrm{BL}_\eta^{\infty}(s)\,\d s.
    \]
    \qed
    \end{proof}

    \section{Boundary-layer estimates}

        The central result of this section is the vanishing of the boundary layer estimates for both the particle system and the mean-field limit, in their regularized and unregularized forms. We subsequently establish the individual convergence results for each case.

    \begin{Proposition}[Boundary-layer convergence as $\eta \to 0$]
        Fix $T>0$ and $t \in (0,T]$. Using the boundary-layer estimates from Proposition~\ref{prop:BL-limit} for the limit process, Proposition~\ref{prop:BL-particle-girsanov} for the particle system via Girsanov, and Proposition~\ref{prop:BL-limit-eta} for the integrated convergence, define the boundary-layer functions
        \[
        \mathrm{BL}^N_\eta(t) := \mathbb{P}\big[0 < X_t^{N,1} < \eta\big] + \mathbb{P}\big[0 < X_t^{N,1,\eta} < \eta\big],
        \qquad
        \mathrm{BL}^\infty_\eta(t) := \mathbb{P}\big[0 < X_t < \eta\big] + \mathbb{P}\big[0 < X_t^\eta < \eta\big].
        \]
        Then, for each fixed $t \in (0,T]$,
        \[
        \lim_{\eta \to 0} \mathrm{BL}^N_\eta(t) = 0,
        \qquad
        \lim_{\eta \to 0} \mathrm{BL}^\infty_\eta(t) = 0,
        \]
        uniformly in $N \geq 1$. We also have:

        \begin{align*}
        \int_0^T \mathrm{BL}_\eta(s)\,\d s &\;\xrightarrow[\eta \to 0]{} 0, \\
        \int_0^T \mathrm{BL}^N_\eta(s)\,\d s &\;\xrightarrow[\eta \to 0]{} 0, \\
        \int_0^T \mathrm{BL}^{\infty}_\eta(s)\,\d s &\;\xrightarrow[\eta \to 0]{} 0.
        \end{align*}

        \label{prop:boundary-layer-conv}
        \end{Proposition}

        \begin{proof}
        For the limit process, Proposition~\ref{prop:BL-limit} gives $\mathbb{P}\big[0 < X_u < \eta\big] \leq \frac{C \eta}{\sqrt{u}}$ for $u \in (0,T]$, so for fixed $t$, $\mathbb{P}\big[0 < X_t < \eta\big] \leq \frac{C \eta}{\sqrt{t}} \to 0$ as $\eta \to 0$. Similarly, for the regularized process $X^\eta$, the same bound holds by analogous arguments in the proof of Proposition~\ref{prop:BL-limit}, yielding $\mathbb{P}\big[0 < X_t^\eta < \eta\big] \leq \frac{C \eta}{\sqrt{t}} \to 0$. Thus, $\mathrm{BL}_\infty^\eta(t) \to 0$.

        For the particle system, Proposition~\ref{prop:BL-particle-girsanov} provides $\mathbb{P}\big[0 < X_u^{N,1} < \eta\big] \leq C_{p,T} \left( \frac{C \eta}{\sqrt{u}} \right)^{1/q}$ for $u \in (0,T]$ and $q > 1$, so for fixed $t$, $\mathbb{P}\big[0 < X_t^{N,1} < \eta\big] \leq C_{p,T} \left( \frac{C \eta}{\sqrt{t}} \right)^{1/q} \to 0$ as $\eta \to 0$, uniformly in $N$. The same applies to the regularized particle $X^{N,1,\eta}$, as the bounds are uniform in $N$ and the regularization does not affect the asymptotic. Hence, $\mathrm{BL}_N^\eta(t) \to 0$ uniformly in $N$.

        Proposition~\ref{prop:BL-limit-eta} confirms the integrated convergence, supporting the point-wise limits.

        \qed
        \end{proof}

        We first prove the estimates for the unregularized particle system and limit:

    \begin{Proposition}[Boundary-layer estimate for the limit process]\label{prop:BL-limit}
        Let $X$ be the limit process solving the McKean--Vlasov equation \eqref{eq:lim} with initial law $\rho$ on $(0,\infty)$, and let $p(u,\cdot)$ denote the (sub-)density on $(0,\infty)$ of $X_u$ for $u>0$. Then, for every $T>0$ there exists $C=C(\|b\|_\infty,\rho,T)>0$ such that
        \[
        \mathbb{P}\big[0<X_u<\eta\big]\le \frac{C\,\eta}{\sqrt{u}},\qquad u\in(0,T],\ \eta>0.
        \]
        Consequently,
        \[
        \int_0^t \mathbb{P}\big[0<X_u<\eta\big]\,\d u\le C\,\eta \sqrt{t},\qquad t\in(0,T].
        \]
    \label{prop:boundary-layer-limit}

    \end{Proposition}

    \begin{proof}
    Let $p(u,\cdot)$ denote the (sub-)density on $(0,\infty)$ of the killed solution to the limit Fokker–Planck equation with Dirichlet boundary at $0$. Writing the Dirichlet heat kernel of the (time-inhomogeneous) uniformly elliptic operator corresponding to the SDE with bounded drift as $K(u,y,x)$, one has the representation
        \[
        p(u,x)=\int_0^\infty K(u,y,x)\,\rho(y)\,\d y,\qquad u>0,\ x>0.
        \]
    Aronson-type Gaussian upper bounds for heat kernels with bounded measurable drift \cite{Aronson1968} yield the estimate:
        \[
        K(u,y,x)\le \frac{C}{\sqrt{u}}\Big(\exp\!\Big(-\frac{(x-y)^2}{C\,u}\Big)+\exp\!\Big(-\frac{(x+y)^2}{C\,u}\Big)\Big),\qquad u\in(0,T],\ x,y>0,
        \]
    for some $C=C(\|b\|_\infty,T)>0$.
    The sub-density satisfies
    \[
    p(u,x) = \int_0^\infty K(u,y,x) \, \rho(y) \, \d y \le \frac{C}{\sqrt{u}} \int_0^\infty \Big( \exp\!\Big(-\frac{(x-y)^2}{C\,u}\Big) + \exp\!\Big(-\frac{(x+y)^2}{C\,u}\Big) \Big) \, \rho(y) \, \d y.
    \]
    Bound the first integral directly:
    \[
    \int_0^\infty \exp\!\Big(-\frac{(x-y)^2}{C\,u}\Big) \, \rho(y) \, \d y \le \int_0^\infty 1 \, \rho(y) \, \d y = 1.
    \]
    For the second integral, since $(x+y)^2 \ge x^2$,
    \[
    \exp\!\Big(-\frac{(x+y)^2}{C\,u}\Big) \le \exp\!\Big(-\frac{x^2}{C\,u}\Big),
    \]
    so
    \[
    \int_0^\infty \exp\!\Big(-\frac{(x+y)^2}{C\,u}\Big) \, \rho(y) \, \d y \le \exp\!\Big(-\frac{x^2}{C\,u}\Big) \int_0^\infty \rho(y) \, \d y = \exp\!\Big(-\frac{x^2}{C\,u}\Big).
    \]
    Thus,
    \[
    p(u,x) \le \frac{C}{\sqrt{u}} \Big( 1 + \exp\!\Big(-\frac{x^2}{C\,u}\Big) \Big).
    \]
    Integrate over $(0,\eta)$:

    \[
    \int_0^\eta p(u,x) \, \d x \le \frac{C}{\sqrt{u}} \int_0^\eta 1 \, \d x + \frac{C}{\sqrt{u}} \int_0^\eta \exp\!\Big(-\frac{x^2}{C\,u}\Big) \, \d x \le \frac{C}{\sqrt{u}} (\eta + \eta) = \frac{2C \, \eta}{\sqrt{u}},
    \]
    since $\exp(-x^2/(C u)) \le 1$. Therefore,
    \[
    \mathbb{P}\big[0 < X_u < \eta\big] = \int_0^\eta p(u,x) \, \d x \le \frac{C \, \eta}{\sqrt{u}}, \quad u \in (0,T].
    \]
    Integrating in time gives
    \[
    \int_0^t \mathbb{P}\big[0 < X_u < \eta\big] \, \d u \le C \, \eta \int_0^t u^{-1/2} \, \d u = 2C \, \eta \, \sqrt{t}, \quad t \in (0,T].
    \]

    \qed
    \end{proof}

To prove the boundary layer result for each particule of \ref{eq:p-s}, we will use a partial Girsanov transform with a fixed $r<N$ that corresponds to $r$ particles that are transformed to independent stopped Brownian motions and whose influence is removed from the other $N-r$ particles. The cost of such transforms will be uniform in $N$.

    \begin{Proposition}[Boundary-layer bound for the particle system]\label{prop:BL-particle-girsanov}
    Fix $T>0$, $p>1$, and set $q := \frac{p}{p-1}$. Under the assumptions of Theorem~\ref{thm:wasserstein-chaos}, there exists $C_{p,T} < \infty$, independent of $N$, such that for all $u \in (0,T]$ and $\eta > 0$,
    \[
    \mathbb{P}\big[0 < X_u^{N,1} < \eta\big] \leq C_{p,T} \left( \frac{C \eta}{\sqrt{u}} \right)^{1/q}.
    \]
    Consequently, integrating in time yields
    \[
    \int_0^t \mathbb{P}\big[0 < X_u^{N,1} < \eta\big] \, \d u \leq C_{p,T} \, \eta \, t^{1 - \frac{1}{2q}}.
    \]
    \label{prop:boundary-layer-system}
    \end{Proposition}

    \begin{proof}
    We employ the partial Girsanov transform with $r=1$ as detailed previously. Let $\mathbb{P}^{(1,N)}$ denote the reference measure where the first particle evolves as a standard Brownian motion killed at $0$, independent of the others. The Radon--Nikodym derivative is given by the Doléans exponential
    \[
    M_t := \exp\left( \int_0^t \beta_s^{(1)}(\hat{X}_s) \cdot \d \hat{W}_s - \frac{1}{2} \int_0^t |\beta_s^{(1)}(\hat{X}_s)|^2 \, \d s \right), \quad t \in [0,T],
    \]
    so that $\frac{\d \mathbb{P}}{\d \mathbb{P}^{(1,N)}} \big|_{\mathcal{F}_t} = M_t$. For the boundary-layer event $A_u := \{0 < \hat{X}_u^{N,1} < \eta\}$, we have
    \[
    \mathbb{P}\big[A_u\big] = \mathbb{E}^{(1,N)} [M_u \, \1_{A_u}].
    \]

    Apply Hölder's inequality with conjugate exponents $(p, q)$:
    \[
    \mathbb{E}^{(1,N)} [M_u \, \1_{A_u}] \leq \left( \mathbb{E}^{(1,N)} [M_u^p] \right)^{1/p} \, \left( \mathbb{P}^{(1,N)}\big[A_u\big] \right)^{1/q}.
    \]

    Express $M_u^p$ as the stochastic exponential:
    \[
    M_u^p = \exp\left( p \int_0^u \beta_s^{(1)} \cdot \d \hat{W}_s - \frac{p}{2} \int_0^u |\beta_s^{(1)}|^2 \, \d s \right) = \mathcal{E}\left( \int_0^u p \, \beta_s^{(1)} \cdot \d \hat{W}_s \right) \exp\left( \frac{p(p-1)}{2} \int_0^u |\beta_s^{(1)}(\hat{X}_s)|^2 \, \d s \right).
    \]
    Taking expectations under $\mathbb{P}^{(1,N)}$ and noting that $\mathbb{E}^{(1,N)} [\mathcal{E}(\cdot)] = 1$, we obtain
    \[
    \mathbb{E}^{(1,N)} [M_u^p] = \mathbb{E}^{(1,N)} \left[ \exp\left( \frac{p(p-1)}{2} \int_0^u |\beta_s^{(1)}(\hat{X}_s)|^2 \, \d s \right) \right] \leq \mathbb{E}^{(1,N)} \left[ \exp\left( \frac{p(p-1)}{2} \int_0^T |\beta_s^{(1)}(\hat{X}_s)|^2 \, \d s \right) \right].
    \]
    By Proposition \ref{GirsanovKparticles} with $r=1$ and $\gamma = \frac{1}{2} p(p-1)$, there exists $C_{p,T} < \infty$ such that $\sup_{N \geq 1} \mathbb{E}^{(1,N)} [M_u^p] \leq C_{p,T}$.

    Under $\mathbb{P}^{(1,N)}$, the first particle is a Brownian motion with variance $2$ killed at $0$. The density $q(u, \cdot)$ satisfies $\sup_{x > 0} q(u, x) \leq C / \sqrt{u}$ by the reflection principle and Dirichlet heat kernel bounds. Thus,
    \[
    \mathbb{P}^{(1,N)}\big[A_u\big] = \int_0^\eta q(u, x) \, \d x \leq \frac{C \eta}{\sqrt{u}}.
    \]

    Substituting into the Hölder inequality:
    \[
    \mathbb{P}\big[A_u\big] \leq \left( \mathbb{E}^{(1,N)} [M_u^p] \right)^{1/p} \, \left( \mathbb{P}^{(1,N)}\big[A_u\big] \right)^{1/q} \leq C_{p,T} \left( \frac{C \eta}{\sqrt{u}} \right)^{1/q}.
    \]
    Integrating over $u \in (0,t]$ gives
    \[
    \int_0^t \mathbb{P}\big[0 < X_u^{N,1} < \eta\big] \, \d u \leq C_{p,T} \, \eta^{1/q} \, t^{1 - \frac{1}{2q}},
    \]
    as the integral of $(C \eta / \sqrt{u})^{1/q} \, du = (C \eta)^{1/q} \int_0^t u^{-1/(2q)} \, du = (C \eta)^{1/q} \, \frac{t^{1 - 1/(2q)}}{1 - 1/(2q)} = C_{p,T}' \, \eta^{1/q} \, t^{1 - 1/(2q)}$ for some adjusted constant.

    \qed
    
    \end{proof}

Let $(X_t^\eta)_{t \geq 0}$ denote the regularized version of \ref{eq:lim} on $(0,\infty)$ defined by the stochastic differential equation
            \[
            \d X_t^\eta \;=\; f_\eta(X_t^\eta)\Big( B_t \, \d t \,+\, \sqrt{2} \, \d W_t\Big),
            \]
            where $W_t$ is a standard Brownian motion, $B_t=\int_{(0,\infty)} b(t,X_t^\eta,y)\mu_t(\d y)$ is a bounded drift term, where $\mu_t:=\Lc(X_t^\eta)$ stands for the law of $X_t^\eta$.

The coefficient $f_\eta : \R \to [0,1]$ satisfies the following conditions:
            \begin{itemize}
                \item $f_\eta\in C^2(\mathbb{R};[0,1])$,
                \item $f_\eta(x) = 0$ for all $x < 0$,
                \item $f_\eta(x) = 1$ for all $x \geq \eta$.
            \end{itemize}
            
    For the regularized system and limit, we use the point-wise convergence of the regularizing functions $f_\eta$:

    \begin{Proposition}[Boundary layer convergence as $\eta \to 0$ for the limit process]
        Let $X$ be the limit process solving the McKean-Vlasov equation \ref{eq:lim} with initial law $\rho$ on $(0,\infty)$, and $(X_t^\eta)_{t \geq 0}$ denote its regularized version. Let $K(t,x,y)$ and $K_\eta(t,x,y)$ denote the corresponding Dirichlet heat kernels on $(0,\infty)$ with absorbing boundary at $0$. Then, for every $T>0$ and $t \in (0,T]$,
        \[
        \lim_{\eta \to 0} \int_0^t \mathbb{P}\big[0 < X_u^\eta < \eta\big] \, \d u = 0.
        \]
        \label{prop:BL-limit-eta}
        \end{Proposition}

        \begin{proof}
        Express the integrated boundary layer as
        \[
        \int_0^t \mathbb{P}\big[0 < X_u^{\eta} < \eta\big] \, \d u = \int_0^t \int_0^\eta \int_0^\infty K_\eta(u,x,y) \, \rho(y) \, \d y \, \d x \, \d u.
        \]
        Split this into
        \[
        \int_0^t \int_0^\eta \int_0^\infty K_\eta(u,x,y) \, \rho(y) \, \d y \, \d x \, \d u = \\
        \int_0^t \int_0^\eta \int_0^\infty (K_\eta(u,x,y) - K(u,x,y)) \, \rho(y) \, \d y \, \d x \, \d u + \\
        \int_0^t \int_0^\eta \int_0^\infty K(u,x,y) \, \rho(y) \, \d y \, \d x \, \d u.
        \]
        By Proposition~\ref{prop:triple-convergence}, the first term converges to $0$ as $\eta \to 0$. The second term is bounded by the standard boundary layer estimate for the limit process: $\int_0^t \mathbb{P}\big[0 < X_u < \eta\big] \, \d u \leq C \, \eta \, \sqrt{t}$, which also converges to $0$ as $\eta \to 0$.

        \qed 
        \end{proof}

                    \begin{Proposition}[Convergence of the Triple Integral for Degenerate Heat Kernels]\label{prop:triple-convergence}
            
            Let $K_\eta(t,x,y)$ denote the transition density (heat kernel) of $X_t^\eta$ with absorbing boundary at $0$, and let $K(t,x,y)$ denote the Dirichlet heat kernel associated with the limiting process on $(0,\infty)$.  
            
            Then, for every $\rho \in L^1(0,\infty)$, one has
            \[
            \int_0^T \int_0^\eta \int_0^\infty \big( K_\eta(t,x,y) - K(t,x,y) \big)\, \rho(y)\, \d y \, \d x \, \d t 
            \;\longrightarrow\; 0
            \qquad \text{as } \eta \to 0.
            \]
            
            Moreover, the convergence rate depends on the regularity of the test function $\rho$:
            \begin{enumerate}
                \item If $\rho \in L^2(0,\infty)$, the convergence rate is of order $O(\sqrt{\eta})$.
                \item If $\rho \in L^\infty(0,\infty)$, the convergence rate improves to order $O(\eta)$.
            \end{enumerate}

            \label{prop:triple-convergence}
            \end{Proposition}

        \begin{proof}
        We prove the convergence using Mosco convergence of the associated Dirichlet forms and semigroup theory.

        Define the Dirichlet forms on $L^2(\mathbb{R})$:
        \[
        \mathcal{E}_\eta(u,v) = \int_{-\infty}^\infty f_\eta(x) \, u'(x) v'(x) \, \d x, \quad D(\mathcal{E}_\eta) = H_0^1(0,\infty) \cap H^2_{\mathrm{loc}}(0,\infty),
        \]
        and
        \[
        \mathcal{E}(u,v) = \int_0^\infty u'(x) v'(x) \, \d x, \quad D(\mathcal{E}) = H_0^1(0,\infty) \cap H^2_{\mathrm{loc}}(0,\infty).
        \]

        We verify the two conditions of Mosco convergence for the sequence of Dirichlet forms \(\mathcal{E}_\eta\) to \(\mathcal{E}\) on \(H = H_0^1(0,\infty)\).

        Let \(u_n \rightharpoonup u\) weakly in \(H\). Since \(f_\eta \to 1\) pointwise a.e. and \(0 \leq f_\eta \leq 1\), by Fatou's lemma,
        \[
        \liminf_{n \to \infty} \mathcal{E}_\eta(u_n, u_n) = \liminf_{n \to \infty} \int_{-\infty}^\infty f_\eta(x) (u_n'(x))^2 \, \d x \geq \int_0^\infty (u'(x))^2 \, \d x = \mathcal{E}(u, u).
        \]

        For any \(u \in D(\mathcal{E})\), set \(u_\eta = u\). Then \(u_\eta \to u\) strongly in \(H\) (since convergence is trivial), and
        \[
        \mathcal{E}_\eta(u_\eta, u_\eta)  \to  \mathcal{E}(u, u),
        \]
        by dominated convergence theorem, as \(f_\eta \to 1\) pointwise and \(|(u')^2| \leq \|u'\|_{L^2}^2 < \infty\).

        Thus, \(\mathcal{E}_\eta \to \mathcal{E}\) in the Mosco sense as \(\eta \to 0\), despite the degeneracy of \(f_\eta\), because the coefficients \(f_\eta\) converge pointwise to 1 and the domains coincide.

        \cite{Kolesnikov2004} Theorem 2.6 implies that the semigroups $P_\eta(t) \to P(t)$ strongly for each $t > 0$: for sequences $u_n \in H$ converging strongly to $u \in H$, $P_\eta(t) u_n \to P(t) u$ strongly.

        The heat kernels satisfy
        \[
        \int_0^\infty K_\eta(t,x,y) \rho(y) \, \d y = (P_\eta(t) \rho)(x), \quad \int_0^\infty K(t,x,y) \rho(y) \, \d y = (P(t) \rho)(x).
        \]
        Thus,
        \[
        \int_0^T \int_0^\eta \int_0^\infty (K_\eta(t,x,y) - K(t,x,y)) \rho(y) \, \d y \, \d x \, \d t = \int_0^T \int_0^\eta [(P_\eta(t) \rho)(x) - (P(t) \rho)(x)] \, \d x \, \d t.
        \]

        \textbf{$L^1$ Case ($\rho \in L^1(0,\infty)$).} Both semigroups are $L^1$-contractive: $\|P_\eta(t) \rho\|_1 \leq \|\rho\|_1$, $\|P(t) \rho\|_1 \leq \|\rho\|_1$. Thus,
        \[
        \left| \int_0^\eta [(P_\eta(t) \rho)(x) - (P(t) \rho)(x)] \, \d x \right| \leq \|P_\eta(t) \rho - P(t) \rho\|_1 \leq 2 \|\rho\|_1.
        \]
        By dominated convergence and pointwise convergence a.e., the integral converges to 0.

        \textbf{$L^2$ Case ($\rho \in L^2(0,\infty)$).} By Cauchy-Schwarz,
        \[
        \left| \int_0^\eta [(P_\eta(t) \rho)(x) - (P(t) \rho)(x)] \, \d x \right| \leq \sqrt{\eta} \, \|P_\eta(t) \rho - P(t) \rho\|_2.
        \]
        The semigroups are $L^2$-contractive, and strong convergence gives the rate $O(\sqrt{\eta})$:
        \[
        \int_0^T \left| \int_0^\eta [(P_\eta(t) \rho)(x) - (P(t) \rho)(x)] \, \d x \right| \d t \leq \sqrt{\eta} \int_0^T \|P_\eta(t) \rho - P(t) \rho\|_2 \, \d t \leq 2T \|\rho\|_2 \sqrt{\eta}.
        \]

        \textbf{$L^\infty$ Case ($\rho \in L^\infty(0,\infty)$).} Using the conservation property $\int K_\eta(t,x,y) \, \d y \leq 1$,
        \[
        \left| \int_0^T \int_0^\eta \int_0^\infty (K_\eta(t,x,y) - K(t,x,y)) \rho(y) \, \d y \, \d x \, \d t \right| \leq 2T \eta \|\rho\|_\infty.
        \]
        Thus, the rate is $O(\eta)$.

        \qed
        \end{proof}

\end{document}